\title{The Method Of Thue-Siegel For Binary Quartic Forms}
\author{Shabnam Akhtari}
\address{ Max-Planck-Institut f\"{u}r Mathematik \newline
 Vivatsgasse 7\newline
53111 Bonn Germany
}
\email{akhtari@mpim-bonn.mpg.de}
\subjclass[2000]{11D25, 11D45}
\keywords{Quartic Thue equations, Thue-Siegel method, Hypergeometric functions}
\begin{document}
\baselineskip=17pt
\maketitle

\newtheorem{thm}{Theorem}[section]
\newtheorem{prop}[thm]{Proposition}
\newtheorem{lemma}[thm]{Lemma}
\newtheorem{cor}[thm]{Corollary}
\newtheorem{conj}[thm]{Conjecture}

\begin{abstract}
We will use Thue-Siegel method, based on  Pad\'e approximation via
hypergeometric functions, to give  upper bounds for the number of integral solutions to the equation
$|F(x , y)| = 1$ as well as the inequalities $\left|F(x , y)\right| \leq  h$, 
for a certain family of irreducible quartic binary forms. 
  \end{abstract}

\section{Introduction}

In 1909, Thue \cite{Thu2} proved that if $F(x , y)$ is an irreducible binary form of degree at least $3$ with integer coefficients, and $h$ a nonzero integer, then the equation $F(x , y) = h$ has only finitely many solutions  in integers $x$ and $y$ .

In this paper we will consider  irreducible  binary quartic forms  with integer coefficients, i.e.  polynomials of the shape
$$
F(x , y) = a_{0}x^{4} + a_{1}x^{3}y + a_{2}x^{2}y^{2} + a_{3}xy^{3} + a_{4}y^{4}. 
$$
The discriminant $D$ of $F$ is given by
$$
D = D_{F} = a_{0}^{6} (\alpha_{1} - \alpha_{2})^{2}  (\alpha_{1} - \alpha_{3})^{2}   (\alpha_{1} - \alpha_{4})^{2}   (\alpha_{2} - \alpha_{3})^{2}  (\alpha_{2} - \alpha_{4})^{2}  (\alpha_{3} - \alpha_{4})^{2} ,
$$
where $\alpha_{1}$ , $\alpha_{2}$, $\alpha_{3}$ and $\alpha_{4}$ are the roots of 
$$
F(x , 1) =  a_{0}x^{4} + a_{1}x^{3} + a_{2}x^{2} + a_{3}x + a_{4} . 
$$
Here, we will recall some well-known fact about the invariants of quartic forms. We refer the reader to \cite{Cre2} for more details. The invariants of $F$ form a ring, generated by two invariants of weights $4$ and $6$, namely
$$
I = I_{F} = a_{2}^{2} - 3a_{1}a_{3} + 12a_{0}a_{4} 
$$
and
$$
J = J_{F} = 2a_{2}^{3} - 9a_{1}a_{2}a_{3} + 27 a_{1}^{2}a_{4} - 72 a_{0}a_{2}a_{4} + 27a_{0}a_{3}^{2}.
$$
These are algebraically independent and every invariant is a polynomial in $I$ and $J$. For the invariant $D$, we have
$$
27D = 4I^3 - J^2.
$$
In what follows, we will just consider the forms $F$ for which the quantity $J_{F}$ is $0$; i.e. for which we have
$$
27D = 4I^{3}.
$$

Let $h$ be a positive integer. The number of solutions in integers $x$ and $y$ of the equation
\begin{equation}\label{12}
\left|F(x , y)\right| = h.
\end{equation}
 will be the focus of our  study in this paper. In Section \ref{TS2}, we will show that to apply a classical theorem of Thue \cite{Thu2} from Diophantine approximation  to a quartic form $F$, one needs to assume  $J_{F} = 0$.

   \begin{thm}\label{main2}
 Let $F(x , y)$ be an irreducible binary quartic form with integer coefficients and positive discriminant that splits in $\mathbb{R}$. If $J_{F} = 0$, then the Diophantine equation $\left|F(x , y)\right| = 1$ possesses at most $12$ solutions in integers $x$ and $y$ (with $(x , y)$ and $(-x , -y)$ regarded as the same) .
 \end{thm}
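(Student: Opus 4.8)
The plan is to convert the counting of solutions of $|F(x,y)| = 1$ into a problem of rational approximation to the four real roots of $F(x,1)$, and then to control that approximation through the Thue--Siegel method based on hypergeometric Pad\'e approximants. Because the number of solutions and the invariants $I$ and $J$ are preserved under the action of $GL_2(\mathbb{Z})$, I would first replace $F$ by a convenient reduced representative of its equivalence class. Writing $F(x,y) = a_0 \prod_{i=1}^{4}(x - \alpha_i y)$, the hypotheses $D > 0$ and that $F$ splits over $\mathbb{R}$ guarantee that $\alpha_1, \dots, \alpha_4$ are real and distinct. Any solution $(x,y)$ with $|y|$ large then forces $x/y$ to lie close to exactly one root $\alpha_i$, and bounding the three remaining linear factors from below yields
\[
\left| \alpha_i - \frac{x}{y} \right| \ll \frac{1}{|y|^{4}}.
\]
Hence every large solution is an unusually good rational approximation to some root, and the total count splits as the number of small solutions plus the number of large solutions attached to each root.

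The hypothesis $J_F = 0$ enters in an essential, structural way, and its use is precisely the point flagged for Section \ref{TS2}. Combined with $D > 0$ and with splitting over $\mathbb{R}$, the relation $27D = 4I^{3}$ places the four roots in a harmonic configuration: their cross-ratio equals $-1$ for a suitable ordering, so there is an extra projective involution interchanging the $\alpha_i$ in two pairs. This symmetry is exactly what lets the roots be described through radical quantities of the type $(1-z)^{1/4}$ (together with their square-root analogues), and it is for such values that the classical theorem of Thue, obtained from Pad\'e approximation to the hypergeometric functions ${}_2F_1$, supplies nontrivial approximation bounds. I would make this reduction explicit, attaching to each root $\alpha_i$ the relevant algebraic number and the small parameter $z$ that governs the quality of the hypergeometric approximation.

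I would then combine two quantitative inputs. The first is a gap principle: if $x_1/y_1$ and $x_2/y_2$ are distinct solutions approximating the same root $\alpha_i$ with $0 < y_1 < y_2$, then
\[
\frac{1}{y_1 y_2} \le \left| \frac{x_1}{y_1} - \frac{x_2}{y_2} \right| \le \left| \frac{x_1}{y_1} - \alpha_i \right| + \left| \alpha_i - \frac{x_2}{y_2} \right| \ll \frac{1}{y_1^{4}} + \frac{1}{y_2^{4}},
\]
whence $y_2 \gg y_1^{3}$; thus solutions clustered at one root have at least cubically growing heights, so only a bounded number can exceed any given bound. The second input is the hypergeometric Pad\'e construction itself, which furnishes for the relevant algebraic numbers an effective measure $\left| \alpha_i - p/q \right| > c(F)\, q^{-\lambda}$ with an explicit constant $c(F)$ and an exponent $\lambda < 4$. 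Confronting this lower bound with the upper bound $\ll |y|^{-4}$ from a large solution, and feeding the outcome into the cubic gap principle, limits the number of large solutions at each root to a small explicit constant; the projective pairing of the roots from the previous step is then used to avoid double-counting across paired roots.

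It remains to count the small solutions, those with $|y|$ below the explicit threshold, which I would bound directly by a geometry-of-numbers estimate controlled by the size of $I_F$, and to assemble the four roots' contributions, identifying $(x,y)$ with $(-x,-y)$ throughout; this bookkeeping is what yields the precise total $12$. I expect the main obstacle to be the hypergeometric step: constructing the Pad\'e approximants from ${}_2F_1$, proving the non-vanishing and near-independence that make them genuinely sharp approximations, and, above all, estimating both their archimedean sizes and their common denominators finely enough to secure an exponent strictly below $4$ with constants small enough to pin the count at exactly $12$ rather than merely yielding finiteness. The delicate interaction between approximations to different roots, and the matching of the small- and large-solution ranges at the threshold, is where the explicit value $12$ is ultimately decided.
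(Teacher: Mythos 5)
Your structural reading of the hypothesis $J_F=0$ is correct: it does place the four roots in harmonic configuration, and that is precisely why $F$ can be tied to $(1-z)^{1/4}$ (in the paper this appears as the diagonalization $F = \frac{1}{8\sqrt{3IA_4}}(\xi^4-\eta^4)$). But the engine of your argument fails at its central step: you assume the hypergeometric Pad\'e construction furnishes an \emph{unconditional} effective irrationality measure $|\alpha_i - p/q| > c(F)q^{-\lambda}$ with $\lambda<4$ for the real roots of any quartic with $J_F=0$. No such measure is available from this method in this generality; hypergeometric effective measures require a very good initial approximation to seed the construction (this is why Krechmar, cited in the introduction, needs $D_F \gg h^{216/5}$, and why Baker-type applications need special numerical hypotheses). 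Indeed, if your claimed measure existed one would obtain effective \emph{size} bounds on all solutions, which the Thue--Siegel method notoriously does not yield. The paper's method is a counting argument on \emph{pairs} of solutions: the first hypothetical solution itself plays the role of the anchor, via $z_1 = 1-\eta_1^4/\xi_1^4$, and the Pad\'e approximants are evaluated at $z_1$ to constrain a second solution ($\Sigma_{r,g} = \frac{\eta_2}{\xi_2}A_{r,g}(z_1)-\frac{\eta_1}{\xi_1}B_{r,g}(z_1)$). This two-solution structure cannot be replaced by an a priori measure plus a gap principle.

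A second, related defect is your choice to approximate the real roots $\alpha_i$ by $x/y$. The lower bounds that make the paper's argument close up (inequality (\ref{ub2})) come from the fact that the nonvanishing quantities $\Lambda_{r,g}$ are algebraic integers in an \emph{imaginary} quadratic field $\mathbb{Q}(\sqrt{A_0I/3})$, where a nonzero integer has absolute value bounded below by a power of the discriminant; this is exactly what the paper's Remark in Section \ref{resolvent2} flags as crucial. Working with real roots, the analogous quantities sit in real fields, where an element can be tiny while its conjugates are large, and no such lower bound exists. The correct bookkeeping is also different from yours: one approximates the four fourth roots of unity by $\eta(x,y)/\xi(x,y)$ and shows at most $3$ solutions are related to each, giving $4\times 3 = 12$; there is no separate count of ``small solutions'' by geometry of numbers. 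Finally, the analytic argument only functions for $I_F > 135$; the range $0 < I_F \le 135$ is handled in the paper by an explicit finite computation over Cremona-reduced quartics with $J_F=0$, a step your outline has no substitute for.
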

 
In Section \ref{FWSD2}, we will summarize the result of our computations for binary forms with small discriminant. We will give some examples for quartic binary forms $F(x , y)$ satisfying the hypotheses of Theorem \ref{main2}, where $\left|F(x , y)\right| = 1$ has $4$ or $3$ solutions in inegers $x$ and $y$. The author is not aware of any quartic binary form $F$ for which $\left|F(x , y)\right| = 1$ has more than $4$ solutions.

In \cite{Akho2} different methods  are used to give an upper bound $61$ upon the number of integral solutions to the equation $\left|F(x , y)\right| = 1$, where $F$ is  an irreducible binary quartic form with no restriction on the value of $J_{F}$ and with $\left|D_{F}\right|$ large enough. Moreover, it is shown in \cite{Akho2} that if the irreducible binary quartic form $F$ splits in $\mathbb{R}$ and have large discriminant, the Diophantine equation 
$\left|F(x , y)\right| = 1$ has at most $36$ solutions in integers $x$ and $y$.
  
\begin{thm}\label{main22}
Let $F(x , y)$ be a reduced irreducible binary quartic form with integer coefficients and positive discriminant that splits in $\mathbb{R}$. If $J_{F} = 0$, then the inequality  $\left|F(x , y)\right| \leq  h$ possesses at most $12$ co-prime solutions $(x , y)$, with  $ |y| \geq \frac{h^{3/4}}{ (3I)^{1/8}}$.
  \end{thm}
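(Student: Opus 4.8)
The plan is to turn every coprime solution $(x,y)$ of $|F(x,y)| \le h$ whose $y$-coordinate lies above the stated threshold into a rational approximation $x/y$ to one of the four real roots $\alpha_1,\dots,\alpha_4$ of $F(t,1)$, of quality good enough that the Thue--Siegel/hypergeometric machinery behind Theorem~\ref{main2} applies without change, and then to count such approximations. Coprimality enters here in an essential way: it guarantees that $x/y$ is a reduced fraction with denominator exactly $|y|$, which is the form in which both the gap principle and the hypergeometric lower bound are stated.

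First I would factor $F(x,y) = a_0 \prod_{i=1}^{4}(x - \alpha_i y)$ and let $\alpha_j$ be the root nearest to $x/y$. Writing $|x - \alpha_i y| = |y|\,|x/y - \alpha_i|$ and using that, for $i \ne j$, the factor $|x/y - \alpha_i|$ is comparable to the root separation $|\alpha_j - \alpha_i|$ once $|y|$ is large, I obtain
\[
\left| \alpha_j - \frac{x}{y} \right| \;\ll\; \frac{h}{|a_0|\,|y|^{4}\prod_{i \ne j}|\alpha_j - \alpha_i|}.
\]
The next step is to absorb the constant into the invariant $I$: since $J_F = 0$ gives $27D = 4I^3$ and $D = a_0^{6}\prod_{i<j}(\alpha_i - \alpha_j)^2$, both $\prod_{i \ne j}|\alpha_j - \alpha_i|$ and the leading coefficient are controlled by a power of $I$, uniformly in $\alpha_j$ (this is where the hypothesis that $F$ is reduced is used, to confine the roots to a standard region so the estimate does not depend on which root we approximate). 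Feeding in the hypothesis $|y| \ge h^{3/4}/(3I)^{1/8}$, equivalently $h \le (3I)^{1/6}|y|^{4/3}$, collapses the bound to
\[
\left| \alpha_j - \frac{x}{y} \right| \;\ll\; |y|^{-8/3},
\]
so each admissible solution is a rational approximation with exponent strictly exceeding $2$.

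With this in hand I would invoke exactly the Thue--Siegel argument that proves Theorem~\ref{main2}: the effective measure of irrationality for the $\alpha_j$ furnished by the Pad\'e approximants to the relevant hypergeometric function has exponent strictly below the value $8/3$ secured by our threshold, so combining it with the displayed approximation forces any solution with $|y|$ beyond an explicit point to obey a gap inequality. The gap principle then limits, for each root, the number of admissible $x/y$, and summing over the four roots yields the count $12$ — the same bound as in Theorem~\ref{main2}, since the hypergeometric input and the combinatorics of the gap principle are unchanged.

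The main obstacle I expect lies in the calibration of the second step: I must show that the threshold $h^{3/4}/(3I)^{1/8}$ is precisely what is needed to push the approximation exponent up to a value that the hypergeometric method can beat, uniformly over all four roots and over every reduced form with $J_F = 0$. This requires pinning down $\prod_{i \ne j}|\alpha_j - \alpha_i|$ and $|a_0|$ in terms of $I$ through the reduction theory together with $27D = 4I^3$, and checking that the implied constants do not erode the exponent $8/3$ below the hypergeometric threshold. The delicate point is that the bound $12$ must survive even though the available exponent $8/3$ is weaker than the exponent $4$ one enjoys when $h = 1$; this hinges on the gap principle being driven by the separation of the roots, an $I$-quantity, rather than by $h$ itself.
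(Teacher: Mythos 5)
Your plan rests on the claim that ``the Thue--Siegel/hypergeometric machinery behind Theorem~\ref{main2}'' provides an effective irrationality measure, with exponent strictly below $8/3$, for the four real roots $\alpha_j$ of $F(t,1)$. This is a genuine gap: no such measure appears in the paper, and the machinery of Theorem~\ref{main2} cannot be ``invoked without change'' on your real-root approximations, because it never approximates real roots at all. The paper's argument first uses $J_F=0$ to diagonalize the form as $F(x,y) = \frac{1}{8\sqrt{3IA_{4}}}\left(\xi^{4}-\eta^{4}\right)$ with \emph{complex conjugate} resolvent linear forms $\xi,\eta$ (Lemmas \ref{rF2} and \ref{nr2}); each solution is then related to one of the four \emph{fourth roots of unity} according to which is nearest to $\eta/\xi$, and the Pad\'e approximants of Lemma~\ref{hyp2} are applied to $(1-z)^{1/4}$ with $z = 1-\eta^{4}/\xi^{4}$. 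The resulting quantities $\Lambda_{r,g}$ are algebraic integers in the \emph{imaginary quadratic} field $\mathbb{Q}(\sqrt{A_{0}I/3})$, and the crucial lower bound (\ref{ub2}) and non-vanishing lemmas (Lemmas \ref{nv12}, \ref{nv2}) depend entirely on that two-dimensional integrality structure. A degree-four real irrationality $\alpha_j$ admits no analogue of this step, and producing a uniform effective measure with exponent $<8/3$ for roots of general quartics is essentially the general quartic Thue problem, which the paper explicitly treats as out of reach of this method (it cites \cite{Akho2}, with the weaker bounds $61$ and $36$, for that). Note also that even in the case $h=1$ the analytic argument only works for $I>135$, with small $I$ handled by exhaustive computation in Section~\ref{FWSD2}; this alone shows the method yields no irrationality measure uniform over all reduced forms with $J_F=0$, which your second step requires. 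The counting in the paper is Evertse-style: the hypergeometric construction is applied to \emph{pairs} of solutions, with one solution serving as the anchor for the next, rather than each solution being tested against an a priori measure.

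Relatedly, your proposed use of the hypotheses is not the paper's. Reduction and the threshold $|y| \ge h^{3/4}/(3I)^{1/8}$ are not used to normalize root separations; via Lemma~\ref{red2} reduction gives $|H(x,y)| \ge 36\,I\,y^{4}$, and the threshold then forces $|\xi(x,y)|^{4} \ge 4h^{3}\sqrt{|3IA_{4}|}$ (inequality (\ref{98h2})), which is exactly the anchor replacing the hypothesis $I > 36.6\,h^{2}$ of the unit-equation case and is what lets Theorem~\ref{main22} hold with no lower bound on $I$. Coprimality enters only through the determinant bound (\ref{lb2}), where $x_{1}y_{2}-x_{2}y_{1}$ must be a nonzero integer. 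The count $12$ then arises as (at most $3$ solutions related to each fourth root of unity) $\times$ ($4$ roots of unity), the ``at most $3$'' coming from the gap principle (\ref{Gap2}) combined with the growth lemma (Lemma~\ref{fin2}), not from counting good rational approximations to each of the four real roots. Your proposal would need to rebuild all of this from scratch for real algebraic numbers of degree four, and the tools to do so are neither in the paper nor supplied by your argument.
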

 The definition of a reduced form is given in  Section \ref{EF2}. It turns out that each quartic binary form is equivalent to a reduced one (see \cite{Cre2}).

One reason for us to be interested in these results, despite what are apparently quite serious restriction upon $F$, is that  we know important families of quartic forms with these properties. For example a solution to the equation $aX^4 - bY^2 = 1$ gives rise  to a solution to the Thue equation
$$
x^4 + 4tx^3y - 6tx^2 y^2 - 4t^2xy^3 + t^2y^4 = t_{1}^2,
$$ 
where $t_{1} | t$. We have applied the methods of this paper to treat the above Thue equation in \cite{Akh2}.
 
  The method of Thue and Siegel based on Pad\'e approximation to binomial functions applies to broad families of binomial Thue equations, and both so-called "quantitative" results (see the works of Evertse \cite{Eve2, Eve3}, for example) as well as effective results (via effective irrationality measure from Baker \cite{Bak2, Bak12} onwards) can be obtained from it. This method
has also been used to study binary cubic forms with positive discriminant, for decades (see \cite{Ev2}, \cite{Ben2}). In $1939$, Krechmar \cite{Kre2} showed that when the discriminant of quartic form $F(x , y)$ is sufficiently large ( $D_{F} \gg h^{216/5}$),  the equation (\ref{12}) has at most $20$ solutions in integers $x$ and $y$, provided that  $J_{F} = 0$ and all roots of $F(x , 1)$ are real numbers.  We will use a refinement of Thue-Siegel method by Evertse \cite{Ev2} to obtain our results.

%-------------------------------------------------                            
 \section{The Method Of Thue-Siegel}\label{TS2}
%------------------------------------------------- 
 The main purpose of this section is to explain why we need the restriction $J_{F} =0$ in the statements of our Theorems. The answer is hidden in the method we use, the method of Thue-Siegel. The relationship between a system of approximations to an arbitrary cubic irrationality and Pad\'e  approximations to $\sqrt[3]{1 - x}$ was first established by Thue \cite{thu352}.  Siegel  \cite{Sie282, Sie292} identified approximating polynomials in  Thue's papers \cite{thu352, thu382} with hypergeometric polynomials and applied this method to  bounding  the number of solutions to Diophantine equation $f(x , y) = k$, for certain binary forms $f(x , y)$ of degree $r$. He also established bounds for the number of solutions to 
 $$
 ax^{n} - by^{n} = c , 
$$
 where $n \geq 3$ \cite{Sie302}.

\begin{lemma}\label{thue2}
 Suppose that $P(x)$ is a polynomial of degree $n$ and there is  a quadratic polynomial $U(x)$ such that 
  \begin{equation}\label{ch2}
 U(x)P''(x) - (n -1)U'(x) P'(x) + \frac{n (n - 1)}{2} U''(x) P(x) = 0 .
 \end{equation}
  Let 
  $$
  Y(x) = 2 U(x) P'(x) - n U'(x)P(x)
  $$
 and 
 $$
 h = \frac{n^{2} - 1}{4} \big{(}U'(x)^{2} - 2U(x) U''(x) \big{)}. 
 $$
  Consider the recurrences 
 $$ 
 P_{r+1}(x) = k_{r}Y(x)P_{r}(x) - P(x)^{2} P_{r-1}(x) ;
 $$
 $$ 
 Q_{r+1}(x) = k_{r}Y(x)Q_{r}(x) - P(x)^{2} Q_{r-1}(x) ,
 $$
  with the initial conditions
 $$
 P_{0}(x) =  Q_{0}(x) = \frac{2}{3} h , 
 $$
 $$
 P_{1}(x) = U(x) P'(x) - \frac{n -1}{2} U'(x) P(x) ,
 $$
 $$
 Q_{1}(x) = x P_{1}(x) - U(x) P(x) ,
 $$
 where
  $$
  c_{1} = \frac{3}{2} , \        c_{2} = \frac{2(2n - 1)(2n +1)}{3 (n - 1) (n + 1)} h  , \         k_{r}c_{r} = \frac{2r + 1}{2} 
  $$
 and 
 $$
 \frac{c_{r+1} - c_{r-1}}{k_{r}} = 2h \frac{n^{2}}{(n -1) (n + 1)} .
 $$
 Then  polynomials $P_{r}(x)$, $Q_{r}(x)$ are of degree $rn + 1$ and satisfy equation 
  $$
  \alpha P_{r}(x) - Q_{r}(x) = (x - \alpha )^{2r + 1} R_{r}(x) 
  $$
  for a polynomial $R_{r}(x)$ .
  \end{lemma}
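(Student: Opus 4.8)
The plan is to prove both assertions by induction on $r$, exploiting that the three recurrences share the same linear form. I record two reductions first. Since $U$ is quadratic, $U'(x)^2 - 2U(x)U''(x)$ equals the discriminant of $U$ and is therefore a constant; hence $h$ is constant, namely $\tfrac{n^2-1}{4}$ times the discriminant of $U$. Second, $\alpha$ must be a root of $P$: writing $E_r(x):=\alpha P_r(x)-Q_r(x)$, the initial data give $E_1(x)=(\alpha-x)P_1(x)+U(x)P(x)$, so $E_1(\alpha)=U(\alpha)P(\alpha)$ and, when $P(\alpha)\neq0$, $E_1'(\alpha)=\tfrac{n+1}{2}U'(\alpha)P(\alpha)\neq0$; thus vanishing to order $\geq2$ forces $P(\alpha)=0$. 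As $P$ is the irreducible form $F(\cdot,1)$, this root is simple, and I may write $P(x)=(x-\alpha)\widehat P(x)$ with $\widehat P(\alpha)\neq0$, so that $P(x)^2=(x-\alpha)^2\widehat P(x)^2$.

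The degree statement is the easier half. Assuming inductively $\deg P_r=\deg Q_r=rn+1$ and $\deg P_{r-1}=\deg Q_{r-1}=(r-1)n+1$, the recurrence gives $\deg(k_rYP_r)=n+(rn+1)$ and $\deg(P^2P_{r-1})=2n+((r-1)n+1)$, both equal to $(r+1)n+1$; the only point to check is that the leading coefficients do not cancel, which is a direct computation with the leading coefficients of $U$, $P$ and the explicit $k_r$. The base cases $P_1=UP'-\tfrac{n-1}{2}U'P$ and $Q_1=xP_1-UP$ have degree $n+1$, the latter only after the top-degree terms of $xP_1$ and $UP$ cancel, a cancellation I would verify explicitly.

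For the vanishing I would induct with a strengthened hypothesis. Linearity of the recurrence yields $E_{r+1}=k_rYE_r-P^2E_{r-1}$, so the $E_r$ obey the same relation. The base case $r=1$ is direct: $E_1(\alpha)=0$ since $P(\alpha)=0$; $E_1'(\alpha)=0$ after substituting $P_1(\alpha)=U(\alpha)P'(\alpha)$; and $E_1''(\alpha)=0$ is exactly the differential equation (\ref{ch2}) at $x=\alpha$, where the term $\tfrac{n(n-1)}{2}U''P$ drops out because $P(\alpha)=0$. Thus $(x-\alpha)^3\mid E_1$. Now suppose $E_{r-1}=(x-\alpha)^{2r-1}R_{r-1}$ and $E_r=(x-\alpha)^{2r+1}R_r$; feeding these into the recurrence and using $P^2=(x-\alpha)^2\widehat P^{\,2}$ gives
\begin{equation*}
E_{r+1}=(x-\alpha)^{2r+1}\bigl(k_rYR_r-\widehat P^{\,2}R_{r-1}\bigr),
\end{equation*}
so divisibility by $(x-\alpha)^{2r+1}$ is automatic and the whole problem reduces to showing that $S_r:=k_rYR_r-\widehat P^{\,2}R_{r-1}$ vanishes to order $2$ at $\alpha$, i.e. $S_r(\alpha)=S_r'(\alpha)=0$. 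This is the step that is not formal: it needs the actual values $R_r(\alpha),R_r'(\alpha),R_{r-1}(\alpha),R_{r-1}'(\alpha)$, so I would enlarge the induction hypothesis to track these two Taylor coefficients and then verify $S_r(\alpha)=S_r'(\alpha)=0$ using the differential equation together with the precise arithmetic of the constants — the relations $k_rc_r=\tfrac{2r+1}{2}$ and $\tfrac{c_{r+1}-c_{r-1}}{k_r}=2h\tfrac{n^2}{(n-1)(n+1)}$, the starting values $c_1,c_2$, and the normalisation $P_0=Q_0=\tfrac23 h$.

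I expect this gain of two orders at each step to be the main obstacle. The recurrence alone only preserves order $2r+1$; the extra vanishing is forced solely by the exact choice of the constants $c_r,k_r$ and of the seeds, and verifying that they conspire to annihilate the order-$(2r+1)$ and order-$(2r+2)$ coefficients of $E_{r+1}$ is the real content of the lemma. Conceptually, the cleanest way to see why these constants are the right ones is to recognise the scheme as the Pad\'e/continued-fraction system attached to the Fuchsian equation (\ref{ch2}): its finite singular points are the two roots of $U$, each with exponents $\{0,n\}$, and $P,P_r,Q_r$ are the Jacobi-type hypergeometric polynomials whose ratio $Q_r/P_r$ furnishes the $r$-th convergent approximating the associated algebraic function at $x=\alpha$; the order-$(2r+1)$ contact is then the defining property of the diagonal Pad\'e approximant, and the displayed three-term recurrence with these constants is its standard companion. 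Carrying out either the explicit Taylor-coefficient bookkeeping or this identification is where the substance of the proof lies.
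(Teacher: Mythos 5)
The paper never actually proves this lemma: it is stated as Thue's classical construction and immediately applied, and the tool the paper really uses later is the hypergeometric Lemma~\ref{hyp2}, whose proof is cited to \cite{Akh2}. So your attempt must stand on its own, and it does not: the step you yourself call ``the real content of the lemma'' is never carried out. Your reductions are sound as far as they go --- $h$ is constant, the statement forces $P(\alpha)=0$, the recurrence passes to $E_r=\alpha P_r-Q_r$ via $E_{r+1}=k_rYE_r-P^2E_{r-1}$, and the base computation $E_1(\alpha)=E_1'(\alpha)=E_1''(\alpha)=0$ is correct (the last equality being exactly (\ref{ch2}) at $\alpha$). But after that, everything hinges on showing that $S_r=k_rYR_r-\widehat{P}^{\,2}R_{r-1}$ vanishes to second order at $\alpha$, i.e.\ on the precise interplay of the constants $c_r,k_r$ and the seeds with the differential equation. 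You only announce two strategies for this (Taylor-coefficient bookkeeping with an enlarged induction hypothesis, or the Pad\'e/hypergeometric identification) without executing either, so what you have is a plan, not a proof.

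There is also a concrete structural failure that your setup runs into but never notices: the induction cannot start. Your strengthened hypothesis requires $(x-\alpha)\mid E_0$ at the step $r=1$, but with the stated seeds $P_0=Q_0=\tfrac{2}{3}h$ one has $E_0=\alpha P_0-Q_0=\tfrac{2}{3}h(\alpha-1)$, a nonzero constant ($\alpha\neq 1$ because $\alpha$ is an algebraic irrational, and $h\neq 0$ whenever $U$ has distinct roots). Feeding this into $E_2=k_1YE_1-P^2E_0$ and using your own base-case identities gives
$$
E_2(\alpha)=E_2'(\alpha)=0,\qquad E_2''(\alpha)=-2P'(\alpha)^2E_0\neq 0,
$$
since $\alpha$ is a simple root of the irreducible $P$; so $E_2$ vanishes to order exactly $2$ at $\alpha$, not $5$. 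In other words, with the initial data as printed the assertion already fails at $r=2$; note the seeds are likewise inconsistent with the lemma's own degree formula, which at $r=0$ would demand degree $1$ rather than a constant (a seed such as $Q_0=\tfrac{2}{3}hx$ would cure both defects at once). This signals a transcription flaw in the statement as it passed from Thue's work into this paper, and any viable proof must first detect and repair it before any induction can run; your proposal, which collides with this at the very first application of the recurrence, does not.
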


  To apply Theorem \ref{thue2}  to the polynomial $P(x) =  a_{0}x^{4} + a_{1}x^{3} + a_{2}x^{2} + a_{3}x + a_{4}$, suppose that for a quadratic polynomial $U(x) = u_{2}x^{2} + u_{1}x + u_{0}$, we have
 
 \begin{eqnarray*}
 0 &=& U(x)P''(x) - 3U'(x) P'(x) + 6 U''(x) P(x)  \\  \nonumber
  &=& (12a_{0}u_{0}-3a_{1}u_{1}+2a_{2}u_{2}) x^{2} + (6a_{1}u_{0}-4a_{2}u_{1}+6a_{3}u_{2}) x \\ \nonumber
& & + 2a_{2}u_{0}-3a_{3}u_{1}+12a_{4}u_{2}.
  \end{eqnarray*}
  This implies that 
 \begin{displaymath}
  \left( \begin{array}{ccc}
  12a_{0} & -3a_{1} &  2a_{2} \\
   3a_{1}  & -2a_{2} &  3a_{3} \\
    2a_{2} &  -3a_{3}&  12a_{4} 
  \end{array} \right) 
  \left( \begin{array}{c}
  u_{0} \\
  u_{1}  \\
  u_{2}
  \end{array} \right)= 0 .
\end{displaymath}
  Therefore,
 \begin{eqnarray*}
& & \textrm{det}  \left( \begin{array}{ccc}
  12a_{0} & -3a_{1} &  2a_{2} \\
   3a_{1}  & -2a_{2} &  3a_{3} \\
    2a_{2} &  -3a_{3}&  12a_{4}  
  \end{array} \right)\\
  & = & 4(2a_{2}^{3} - 9a_{1}a_{2}a_{3} + 27 a_{1}^{2}a_{4} - 72 a_{0}a_{2}a_{4} + 27a_{0}a_{3}^{2})\\ \nonumber
& = &4J = 0 .
 \end{eqnarray*}

 In this paper, we always suppose that $J = 0$. In Section \ref{resolvent2}, we will show  that if $J_{F} = 0$ then there are linear forms $\xi = \xi(x , y)$ and $\eta = \eta(x , y)$ so that 
$$F(x , y) = \frac{1}{8\sqrt{3IA_{4}}}\left(\xi^{4} - \eta^{4}\right),$$
where the quantity $A_{4}$ is defined in (\ref{Ai2}).
 We will use Pad\'e approximation via hypergeometric polynomials to approximate ${\eta}/{\xi}$ with rational integers.
The main idea here is to replace the construction of a family of dense approximations to ${\eta}/{\xi}$, by a family of rational approximations to the function $(1 - z)^{1/4}$. Consider the system of linear forms $R_{r}(z) = -Q_{r}(z) + (1-z)^{1/4}P_{r}(z)$ that approximate $(1-z)^{1/4}$ at $z = 0$,  such that $R_{r}(z) = z^{2r+1}\bar{R}_{r}(z)$, $\bar{R}_{r}(z)$ is regular at $z=0$, and $P_{r}(z)$ and $Q_{r}(z)$ are polynomials of degree $r$. Thue \cite{thu342, thu352} explicitly found polynomials $P_{r}(z)$ and $Q_{r}(z)$ 
 and  Siegel \cite{Sie282} identified them in terms of hypergeometric  polynomials. Refining the method of Siegel,  Evertse \cite{Ev2} used the theory of hypergeometric functions to give an upper bound for the number of solutions to the equation $f(x , y) = 1$, where $f$ is a cubic binary form with positive discriminant. Here we adjust Lemma $4$ of \cite{Ev2} for quartic forms.

\begin{lemma}\label{hyp2}
Let $r$, $g$ be integers with $r \geq 1$, $g \in \{ 0 , 1 \}$. Put
\begin{eqnarray}\label{AB2}\nonumber
A_{r, g} (z) & = &  \sum_{m =0}^{r}{r - g + \frac{1}{4} \choose m} {2r - g - m \choose r - g}   (-z)^{m},   \\  
B_{r, g} (z) & = & \sum_{m =0}^{r-g}{r - \frac{1}{4} \choose m} {2r - g - m \choose r }   (-z)^{m}.
 \end{eqnarray}
  \flushleft
 \begin{itemize}
 \item[(i)] There exists a power series $F_{r,g}(z)$ such that for all complex numbers $z$ with $|z| < 1$
 \begin{equation}\label{ABF2}
 A_{r,g}(z) - (1 - z)^{1/4} B_{r, g}(z) = z^{2r+1 -g}F_{r,g}(z)
 \end{equation}
  and 
  \begin{equation}\label{F2}
 |F_{r,g}(z)| \leq \frac{{r-g+1/4 \choose r+1-g} {r- 1/4 \choose r}}{{2r + 1 - g \choose r}} (1 - |z|)^{-\frac{1}{2}(2r + 1 - g)}.
 \end{equation}
 \item[(ii)] For all complex numbers $z$ with $|1 - z| \leq 1$ we have 
 \begin{equation}\label{A2}
 |A_{r,g}(z)| \leq {2r - g \choose r}.
 \end{equation}
 
 \item[(iii)] For all complex numbers $z \neq 0$ and for $h \in \{1 , 0\}$ we have
 \begin{equation}\label{BA2}
 A_{r, 0}(z) B_{r+h , 1 , 1}(z) \neq A_{r+h , 1}(z) B_{r , 0}(z).
 \end{equation} 
 \end{itemize}
\end{lemma}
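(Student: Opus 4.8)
Throughout write $[w^{k}]G$ for the coefficient of $w^{k}$ in a power series $G$. The polynomials $A_{r,g}$ and $B_{r,g}$ are the hypergeometric polynomials underlying the Pad\'e approximation of $(1-z)^{1/4}$, and the statement is the quartic analogue of Evertse's cubic lemma, so the plan is to follow his argument with the exponent $1/3$ replaced by $1/4$. The computational backbone I would first establish is the pair of generating-function identities
$$A_{r,g}(z)=[w^{r}]\,(1+w)^{r-1/4}\bigl(1+(1-z)w\bigr)^{r-g+1/4},\qquad B_{r,g}(z)=[w^{r-g}]\,(1+w)^{r-g+1/4}\bigl(1+(1-z)w\bigr)^{r-1/4},$$
obtained by writing $\binom{2r-g-m}{r-g}=[w^{r-m}](1+w)^{2r-g-m}$ and $\binom{2r-g-m}{r}=[w^{r-g-m}](1+w)^{2r-g-m}$, interchanging the two summations, and summing the resulting (now unrestricted) binomial series. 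These at once give the hypergeometric form $A_{r,g}(z)=\binom{2r-g}{r}\,{}_2F_1\!\bigl(-r,-(r-g)-\tfrac14;-(2r-g);z\bigr)$ and an analogous expression for $B_{r,g}$.

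For (i), the contact order is the defining Pad\'e property: as $\deg A_{r,g}=r<2r+1-g$, the coefficient of $z^{2r+1-g}$ in $R_{r,g}(z):=A_{r,g}(z)-(1-z)^{1/4}B_{r,g}(z)$ comes only from $(1-z)^{1/4}B_{r,g}(z)$, and a direct expansion (equivalently, the fact that $(1-z)^{1/4}B_{r,g}$ is the Euler-transformation partner of $A_{r,g}$, valid only up to truncation) shows the lower coefficients cancel, so $R_{r,g}(z)=z^{2r+1-g}F_{r,g}(z)$ with $F_{r,g}$ analytic on $|z|<1$. I would then identify
$$F_{r,g}(z)=F_{r,g}(0)\,{}_2F_1\!\bigl(r+\tfrac34,\,r+1-g;\,2r+2-g;\,z\bigr),$$
either by matching $R_{r,g}$ with the classical remainder of the Pad\'e approximation to $(1-z)^{1/4}$ or by checking that both sides solve the same hypergeometric equation; here $F_{r,g}(0)$ is the coefficient of $z^{2r+1-g}$ in $-(1-z)^{1/4}B_{r,g}(z)$, which reduces by a Vandermonde-type identity to $\binom{r-g+1/4}{r+1-g}\binom{r-1/4}{r}\big/\binom{2r+1-g}{r}$. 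The bound then follows by dominating the Taylor coefficients of $F_{r,g}$ termwise by those of $F_{r,g}(0)(1-z)^{-(2r+1-g)/2}$: with $s=\tfrac12(2r+1-g)$ this is the inequality $\frac{(r+3/4)_k(r+1-g)_k}{(2r+2-g)_k}\le (s)_k$ for all $k\ge0$, which I would prove by comparing successive ratios, i.e.\ verifying $(r+\tfrac34+k)(r+1-g+k)\le (s+k)(2r+2-g+k)$. Summing the dominated series yields $|F_{r,g}(z)|\le F_{r,g}(0)(1-|z|)^{-(2r+1-g)/2}$. This last estimate is the crux of the whole lemma and the step I expect to be hardest: a crude bound on an Euler-type integrand yields an exponent strictly larger than $\tfrac12(2r+1-g)$, and extracting the sharp value forces one to pin $F_{r,g}$ down as exactly the above ${}_2F_1$ and to run the delicate Pochhammer domination.

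For (ii), I would feed the first generating-function identity with $\zeta:=1-z$, $|\zeta|\le1$, to get
$$A_{r,g}(z)=\sum_{i+j=r}\binom{r-1/4}{j}\binom{r-g+1/4}{i}\,\zeta^{\,i},$$
in which, because $g\in\{0,1\}$, every coefficient $\binom{r-1/4}{j}$ and $\binom{r-g+1/4}{i}$ with $0\le i,j\le r$ is \emph{positive}. The triangle inequality together with $|\zeta|\le1$ then gives
$$|A_{r,g}(z)|\le\sum_{i+j=r}\binom{r-1/4}{j}\binom{r-g+1/4}{i}=[w^{r}](1+w)^{2r-g}=\binom{2r-g}{r},$$
which is precisely (ii).

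For (iii), I substitute the identity of (i) into $\Delta(z):=A_{r,0}(z)B_{r+h,1}(z)-A_{r+h,1}(z)B_{r,0}(z)$; the two $(1-z)^{1/4}B\cdot B$ terms cancel, leaving
$$\Delta(z)=z^{2r+1}F_{r,0}(z)B_{r+h,1}(z)-z^{2r+2h}F_{r+h,1}(z)B_{r,0}(z),$$
so $\Delta$ vanishes at $0$ to order at least $2r$ if $h=0$ and at least $2r+1$ if $h=1$. Since moreover $\deg\Delta\le 2r+h$, in both cases $\Delta$ is forced to be a single monomial, $\Delta(z)=c\,z^{2r}$ (for $h=0$) or $c\,z^{2r+1}$ (for $h=1$). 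Being a monomial, $c$ equals the lowest Taylor coefficient of the displayed expression, namely $c=-F_{r,1}(0)\binom{2r}{r}$ when $h=0$ and $c=F_{r,0}(0)\binom{2r+1}{r+1}$ when $h=1$; in each case $F(0)>0$ by (i) and the binomial is nonzero, so $c\ne0$ and hence $\Delta(z)\ne0$ for every $z\ne0$, which is (iii).
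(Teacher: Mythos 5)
Your proposal cannot be compared with an argument in the paper for a simple reason: the paper does not prove this lemma at all. Its entire ``proof'' is the sentence ``This lemma has been proven in \cite{Akh2}'', i.e.\ the work is outsourced to the author's companion paper on $aX^4-bY^2=1$, where the statement is obtained by adapting Evertse's Lemma 4 from the cubic exponent $1/3$ to the quartic exponent $1/4$. What you have written is a self-contained reconstruction of that missing argument, and the parts you actually execute are correct: the generating-function identities follow from $1+(1-z)w=(1+w)-zw$ together with $\binom{2r-g-m}{r-m}=\binom{2r-g-m}{r-g}$; the positivity-plus-Vandermonde proof of (ii) is complete; the monomial (Wronskian) argument for (iii) --- vanishing order $2r+2h$ versus degree $\le 2r+h$, forcing $\Delta(z)=cz^{2r}$ or $cz^{2r+1}$ with $c=-F_{r,1}(0)\binom{2r}{r}$ resp.\ $F_{r,0}(0)\binom{2r+1}{r+1}\ne 0$ --- is exactly the standard one and is watertight once (i) is known; and your Pochhammer ratio inequality $(r+\tfrac34+k)(r+1-g+k)\le(s+k)(2r+2-g+k)$ with $s=\tfrac12(2r+1-g)$ does check out and yields the sharp exponent in (\ref{F2}). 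As corroboration, your formula gives $F_{1,0}(0)=\tfrac{5}{128}$, the same constant the paper later uses in $c_2(1,0)$ of Lemma \ref{c2}. So your route buys self-containedness, where the paper's citation buys brevity.

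One step of yours needs a patch, though it is repairable and you correctly flag it as the crux. For the identification of $F_{r,g}$ in (i), the ODE alternative you propose does not work as stated: here $c=-(2r-g)$ is a negative integer, and \emph{both} $A_{r,g}$ and $(1-z)^{1/4}B_{r,g}$ are solutions of the same hypergeometric equation that are analytic at $z=0$, so mere membership in the two-dimensional solution space cannot force their difference to vanish to order $2r+1-g$. You need the extra observations that (in this resonant case) the solution space is spanned by $A_{r,g}$, which is nonzero at the origin, and by a Frobenius solution vanishing to exact order $2r+1-g$, and then that $A_{r,g}(0)=B_{r,g}(0)=\binom{2r-g}{r}$ kills the $A_{r,g}$-component of the difference; equivalently, one can bypass the ODE entirely with the Euler-type integral representation of the remainder (as in Evertse and Chudnovsky, and presumably in \cite{Akh2}). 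With that repair your argument is complete.
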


\begin{proof} This lemma has been proven in \cite{Akh2}.
\end{proof}

%............................................................................ 
 \section{Equivalent Forms}\label{EF2}
 %..............................................................................
 
 We will call  forms $F_{1}$ and $F_{2}$ equivalent if they are equivalent under $SL_{2}(\mathbb{Z})$-action (i.e. if there exist integers $b$, $c$, $d$ and $e$ such that
$$
F_{1}(bx + c y , dx + ey) = F_{2}(x , y)
$$
for  all $x$ and $y$, where $be - cd = \pm 1$). Denote by $N_{F}$ the number of solutions in integers $x$ and $y$ of the Diophantine equation
\begin{equation*}
\left|F(x , y)\right| = h.
\end{equation*}
 Note that if $F_{1}$ and $F_{2}$ are equivalent, then $N_{F_{1}} = N_{F_{2}}$, $I_{F_{1}} = I_{F_{2}}$ and $J_{F_{1}} = J_{F_{2}}$. 

 Let us define, for a quartic form $F$, an associated  quartic form, the Hessian $H$, by
  $$
  H(x , y) = \frac{d^{2}F}{dx^{2}}  \frac{d^{2}F}{dy^{2}} - \left(\frac{d^{2}F}{dx dy}\right)^{2}. 
  $$
  Then
 $$
 H(x , y)= A_{0}x^{4} + A_{1}x^{3}y + A_{2}x^{2}y^{2} + A_{3}xy^{3} + A_{4}y^{4},
 $$
  where
  \begin{eqnarray}\label{Ai2}\nonumber
 A_{0} & = & 3 (8a_{0}a_{2} - 3a_{1}^{2}) ,\\  \nonumber
 A_{1} & = & 12(6a_{0}a_{3} - a_{1}a_{2}), \\ 
 A_{2} & = &  6(3a_{1}a_{3} + 24a_{0}a_{4} -2a_{2}^{2}), \\  \nonumber
 A_{3} & = & 12(6a_{1}a_{4} - a_{2}a_{3}),\\ \nonumber
 A_{4} & = & 3(8a_{2}a_{4} - 3 a_{3}^{2}).
  \end{eqnarray}

  We have the following identities (see Proposition 5 of \cite{Cre2}):
 \begin{equation}\label{IHF2}
 I_{H} = 12^{2} I_{F}^{2}, 
 \end{equation}
 \begin{equation}\label{JHF2}
     J_{H} = 12^{3}(2I_{F}^{3} - J_{F}^{2})  
     \end{equation}
     and
     $$  D_{H} = 12^{6}J_{F}^{2}D_{F} , 
 $$
  where $H$ is the Hessian of $F$ and $D_{F}$, $D_{H}$ are the discriminants of $F$ and $H$, respectively . From identities in (\ref{Ai2}) and using algebraic manipulation, we have
 \begin{eqnarray*}
& & A_{0}A_{3}^{2} - A_{4}A_{1}^{2} \\ \nonumber
& = & 12^{3}(a_{0}a_{3}^{2} - a_{4}a_{1}^{2}) \left( 2a_{2}^{3} - 9a_{1}a_{2}a_{3} + 27 a_{1}^{2}a_{4} - 72 a_{0}a_{2}a_{4} + 27a_{0}a_{3}^{2}\right)\\ \nonumber
& = & 12^{3}(a_{0}a_{3}^{2} - a_{4}a_{1}^{2})J_{F} 
 \end{eqnarray*}
  and similarly, 
  $$
  A_{3}^{3} + 8A_{1}A_{4}^{2} - 4A_{2}A_{3}A_{4} = 12^{3}(a_{3}^{3} + 8a_{1}a_{4}^{2} - 4 a_{2}a_{3}a_{4})J_{F}.
  $$
  When $J_{F} = 0$, we obtain
  \begin{eqnarray}\label{22}
   A_{0}A_{3}^{2} &=& A_{4}A_{1}^{2},\\ \nonumber
   A_{3}^{3} + 8A_{1}A_{4}^{2} &= &4 A_{2}A_{3}A_{4}.
 \end{eqnarray}
  Therefore,   when $A_{3}A_{4} \neq 0$,
     \begin{eqnarray*}
   H(x , y) & = &A_{0}x^{4} + A_{1}x^{3}y + A_{2}x^{2}y^{2} + A_{3}xy^{3} + A_{4}y^{4} \\
& = &
\frac{1}{4A_{3}^{2}A_{4}}(2A_{1}A_{4}x^{2} + A_{3}^{2}xy + 2
   A_{4}A_{3}y^{2})^{2} \\ \nonumber
   & = & \frac{1}{4A_{3}^{2}A_{4}} W(x , y)^2,  
    \end{eqnarray*}
  where we define the quadratic form $W(x , y) =  2A_{1}A_{4}x^{2} + A_{3}^{2}xy + 2
   A_{4}A_{3}y^{2}$.  So we get
$$
I_{H}= \left(\frac{A_{3}^4 - 16A_{1}A_{4}^2A_{3}}{4A_{3}^2 A_{4}}\right)^2.
$$
From (\ref{IHF2}), we obtain
  \begin{equation}\label{c22}
\left|  A_{3}^4 - 16A_{1}A_{4}^2A_{3}\right| = \left|48A_{3}^2 A_{4}I_{F}\right|.
    \end{equation}

 In order to make  good use of the above identities, we prove the following lemma:
   \begin{lemma}\label{A342}
Let $F(x , y)$ be a quartic form with $J_{F}= 0$. There exists a form equivalent to $F(x , y)$, for which $A_{3} A_{4} \neq 0$.
\end{lemma}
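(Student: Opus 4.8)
The plan is to reduce the claim to a statement about the Hessian and then exploit the freedom in the $SL_2(\mathbb{Z})$-action. The Hessian is a covariant of $F$: for any substitution $G(x,y) = F(bx + cy, dx + ey)$ with $be - cd = 1$, the identity $\mathrm{Hess}(G) = M^{\mathsf{T}}\,\mathrm{Hess}(F)(M\,\cdot)\,M$ gives, on taking determinants and using $(\det M)^2 = 1$, that the Hessian of $G$ is $H(bx + cy, dx + ey)$. Hence the coefficients $A_0, \dots, A_4$ attached to $G$ are exactly the coefficients of the transformed quartic $H(bx+cy, dx+ey)$, and it suffices to exhibit one such substitution making the coefficients of $y^4$ and of $xy^3$ both nonzero. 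Observe that $H$ is not identically zero: a binary quartic has vanishing Hessian only when it is a fourth power of a linear form, whereas $F$ has distinct roots under our standing hypotheses (indeed $I_H = 144\, I_F^2$ with $I_F \neq 0$).

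First I would arrange $A_4 \neq 0$. Setting $x = 0$ in $H(bx + cy, dx + ey)$ shows that the coefficient of $y^4$ equals $H(c, e)$. Since $H(1, n) = A_0 + A_1 n + A_2 n^2 + A_3 n^3 + A_4 n^4$ is a nonzero polynomial in $n$, it vanishes for at most four integers $n$; choosing $n$ outside this finite set and taking $(c, e) = (1, n)$ gives a primitive vector with $H(c, e) \neq 0$. As $\gcd(c, e) = 1$, I can complete $(c, e)$ to an integer matrix of determinant one, i.e. pick $b, d$ with $be - cd = 1$, and the resulting form $G$ then has $A_4 = H(c, e) \neq 0$.

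Next I would correct $A_3$ while preserving $A_4$. A similar extraction (differentiating once in $x$ and setting $x = 0$) shows that the coefficient of $xy^3$ in $H(bx + cy, dx + ey)$ equals $b H_X(c, e) + d H_Y(c, e)$, where $H_X, H_Y$ are the partial derivatives of $H$. Replacing $(b, d)$ by $(b + tc, d + te)$ for an integer $t$ preserves $be - cd = 1$ and leaves the second column $(c,e)$ unchanged, so $A_4 = H(c,e)$ is unaffected, while $A_3$ becomes $b H_X(c,e) + d H_Y(c,e) + t\bigl(c H_X(c, e) + e H_Y(c, e)\bigr)$. By Euler's identity for the homogeneous quartic $H$ we have $c H_X(c, e) + e H_Y(c, e) = 4 H(c, e) \neq 0$, so this expression is a non-constant affine function of $t$ and vanishes for at most one integer $t$. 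Choosing any other value of $t$ yields a form equivalent to $F$ with $A_3 \neq 0$ and $A_4 = H(c,e) \neq 0$.

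The two coefficient extractions are routine; the crux of the argument — and the step I would treat most carefully — is the covariance of the Hessian under $SL_2(\mathbb{Z})$ combined with Euler's identity, since it is precisely this that forces the slope $4 H(c,e)$ of the $xy^3$-coefficient to be nonzero once $A_4 \neq 0$ has been secured. It is also worth confirming at the outset that $H \not\equiv 0$, for otherwise every $A_i$ would vanish identically and no equivalent form could satisfy $A_3 A_4 \neq 0$.
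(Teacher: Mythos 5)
Your proof is correct and is essentially the paper's own argument: both rely on the covariance of the Hessian under the $SL_2(\mathbb{Z})$-action, first choose the second column $(c,e)$ of the substitution so that the new $A_4$ equals $H(c,e)\neq 0$ (possible because irreducibility of $F$ forces $H\not\equiv 0$), and then shear the first column by an integer multiple $t$ of the second, so that the new $A_3$ is affine in $t$ with nonzero slope $4H(c,e)$. The only cosmetic differences are that you obtain that slope from Euler's identity where the paper computes it directly, and that you skip the paper's preliminary reduction (via the identities forced by $J_F=0$) to the case $A_3=A_4=0$, which your argument never needs.
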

\begin{proof}
 If $A_{4} = 0$, then by (\ref{22}) we have $A_{4} = A_{3} = 0$ and therefore,
  $$
  H(x , y) = x^{2}(A_{0}x^{2} + A_{1}xy + A_{2}y^{2}) . 
  $$
  Let
  $$
  x= mX + lY 
  $$
  and
  $$
  y = pX + q Y, 
  $$
  where $m, l, p$ and $q$ are integers satisfying $mq - lp = \pm 1$. Suppose that $\Phi_{1}(X , Y)$ is equivalent to $F(x , y)$ under this substitution with Hessian
  $$
  H_{\Phi_{1}}(X , Y) =  A'_{0}X^{4} + A'_{1}X^{3}Y + A'_{2}X^{2}Y^{2} + A'_{3}XY^{3} + A'_{4}Y^{4}.
   $$
  We have,
   $$
   A'_{4} = H_{\Phi_{1}} (0 , 1) = H_{F}(l , q)  =l^{2}(A_{0}l^{2} + A_{1}lq + A_{2}q^{2}).
   $$
   If $H_{F}$ is identically zero then by (\ref{IHF2}) and (\ref{JHF2}), we will have $I_{F} = J_{F} = D_{F} =0$. But since
we have  assumed that  $F(x , y)$ is irreducible, $H_{F}(x , y)$ is not identically zero. Therefore, the integers $l$ and $q$ can be chosen so that 
$$
A'_{4} = H_{F}(l , q)  \neq 0.
$$

Let $t \in \mathbb{Z}$ and put 
$$
M = m + lt,
$$
$$
P = p + qt.
$$ 
 Let $\Phi_{2} (X , Y)$ be the equivalent form to $F(x , y)$ under the substitution  $$
 x = MX + lY
 $$
 and
 $$
 y = PX + qY,
 $$
and 
$H_{\Phi_{2}}(X , Y) =   A''_{0}X^{4} + A''_{1}X^{3}Y + A''_{2}X^{2}Y^{2} + A''_{3}XY^{3} + A''_{4}Y^{4}$. Then substituting $x$ by $ MX + lY$ and $y$ by $PX + qY$ in $H_{F}(x , y)$, we find that $A''_{3}$, the coefficient of the term $XY^3$ in $H_{\Phi_{2}}(X , Y)$, is equal to
\begin{eqnarray*}\nonumber
  A''_{3} & =&  4Ml^{3}A_{0} + (l^{3}P + 3Ml^{2}q)A_{1} + (2l^{2}Pq + 2Mlq^{2}) A_{2} \\ \nonumber
& & + (q^{3}m + 3Pq^{2}l)A_{3} + 4Pq^{3} A_{4} \\ \nonumber
& =& (m + lt)(4l^{3}A_{0} + 3l^{2}qA_{1} + 2lq^{2}A_{2} + q^{3} A_{3}) \\ \nonumber
& & + (p + qt) (l^{3}A_{1} + 2l^{2}qA_{2} + 3lq^{2}A_{3} +4q^{3}A_{4}) \\ \nonumber
& =& K + 4t (l^4 A_{0} + l^3 q A_{1} + l^2 q^2 A_{2}+ l q^3 A_{3}+ q^4 A_{4})\\ \nonumber
& = & K + 4t A'_{4}.
  \end{eqnarray*}
Since $A'_{4} \neq 0$, the integer $t$ can be chosen so that $A''_{3} \neq 0$.
\end{proof}
  In the following, we will show that $F(x , y)$ or one of its equivalences (under $GL_{2}(\mathbb{Z})$-action) satisfies 
  $$
  |A_{4}| < 4I .
  $$
 % $$
 % 0 <  |A_{3}^{2}A_{4}| < 4^{5} I_{F}^{3} .
 % $$ 
  From now on, we will suppose that $A_{3}A_{4} \neq 0$. Let
 $$
 x = mX + lY
 $$
 and
 $$
 y = pX + qY ,
 $$
 where $m$, $l$, $p$ and $q$ are integers satisfying $mq - lp = \pm 1$. Let $\Phi( X , Y)$ be equivalent to $F(x , y)$ under this substitution and
  $$
  \Phi(X , Y) = a'_{0}X^{4} + a'_{1}X^{3}Y + a'_{2}X^{2}Y^{2} + a'_{3}X Y^{3} + a'_{4} Y^{4} .
  $$
 We observe that
 $$
 A'_{4} = H_{\Phi} (0 , 1) = H_{F}(l , q) ,
 $$
  where $H_{\Phi}( X , Y) =   A'_{0}X^{4} + A'_{1}X^{3}Y + A'_{2}X^{2}Y^{2} + A'_{3}XY^{3} + A'_{4}Y^{4} $.
 
 To continue,  we will be in need of  the following Proposition due to Hermite.
  
  \begin{prop}\label{her2}
   Suppose that $f_{11}x^{2} + 2f_{1 2} xy + f_{2 2}y^{2}$ is a binary form with $D = f_{11}f_{22} - f_{1 2}^{2} \neq 0$. Then there is an integer pair $(u_{1} , u_{2}) \neq (0 , 0)$ for which
  $$
  0 < |f_{1 1}u_{1}^{2} + 2f_{1 2} u_{1}u_{2} + f_{2 2}u^{2} | < \sqrt{\frac{4}{3}|D|} .
  $$
  \end{prop}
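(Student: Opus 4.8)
The plan is to recognize this as Hermite's classical bound on the arithmetic minimum of a binary quadratic form and to prove it through reduction theory, the constant $\sqrt{4/3}=\gamma_2$ being exactly the Hermite constant in dimension two. Writing $Q(x,y)=f_{11}x^{2}+2f_{12}xy+f_{22}y^{2}$, I introduce the quantity $m=\min\{\,|Q(u_1,u_2)| : (u_1,u_2)\in\mathbb{Z}^2\setminus\{(0,0)\},\ Q(u_1,u_2)\neq 0\,\}$, so that the task is to exhibit a nonzero integer pair realizing $0<m<\sqrt{\tfrac43|D|}$. Replacing $Q$ by $-Q$ alters neither $|Q|$ nor $|D|$, so there is no loss in treating the positive definite case $D>0$; this is the setting in which the Hermite constant is sharp and in which $m$ is actually attained, and it is the shape of form relevant to the application (the quadratic forms we produce arise as $W(x,y)$ through the identity $H=\tfrac{1}{4A_3^{2}A_4}W^{2}$ together with \eqref{c22}).

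For the definite case I would first note that the minimizer $v_1=(u_1,u_2)$ may be taken primitive: if $v_1=d\,w$ with $d\ge 2$ and $w$ primitive, then $Q(w)=Q(v_1)/d^{2}<Q(v_1)$, contradicting minimality. A primitive vector extends to a $\mathbb{Z}$-basis $(v_1,v_2)$ of $\mathbb{Z}^2$, and writing $Q$ in the coordinates of this basis turns it into $m\,x^{2}+2b\,xy+c\,y^{2}$ with the determinant unchanged, $D=mc-b^{2}$. The shear $v_2\mapsto v_2+k v_1$ replaces $b$ by $b-km$, so I may choose $k\in\mathbb{Z}$ with $|b|\le m/2$; throughout this process $c=|Q(v_2)|\ge m$, since $v_2$ remains a nonzero lattice vector and $m$ is the minimum.

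Combining $c\ge m$ with $|b|\le m/2$ gives
\[
D \;=\; mc-b^{2}\;\ge\; m\cdot m-\frac{m^{2}}{4}\;=\;\frac{3}{4}\,m^{2},
\]
whence $m\le\sqrt{\tfrac43\,D}=\sqrt{\tfrac43|D|}$, and taking $(u_1,u_2)=v_1$ produces a nonzero integer pair with $0<|Q(u_1,u_2)|=m\le\sqrt{\tfrac43|D|}$, the desired bound. The bookkeeping that the $SL_2(\mathbb{Z})$ change of basis preserves both $D$ and the set of represented values is routine and I would only indicate it.

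The step I expect to be the genuine obstacle is sharpening the inequality to the strict one demanded in the statement. Equality $m=\sqrt{\tfrac43 D}$ forces $c=m$ and $|b|=m/2$ simultaneously, so the reduced form is then the scalar multiple $m(x^{2}\pm xy+y^{2})$ of the hexagonal form; consequently the strict inequality holds exactly when $Q$ is not equivalent to a multiple of $x^{2}+xy+y^{2}$, and I would record the mild verification that the forms built from $W(x,y)$ do not have this single extremal shape. I would also flag that the restriction to definite forms cannot be dropped freely: an indefinite form representing zero, such as $xy$, violates the stated bound, which is precisely why the hypothesis $D\neq 0$ must be read together with the definiteness that the application supplies.
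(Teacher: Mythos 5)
The paper gives no argument for Proposition \ref{her2} at all: its ``proof'' is the single line ``See \cite{Cas2}, page 31.'' So there is nothing internal to compare you against line by line; what you have written is essentially the classical reduction-theory proof that the cited source contains for the definite case (primitive minimizer, completion to a basis of $\mathbb{Z}^2$, a shear achieving $|b|\le m/2$, and then $D = mc - b^2 \ge \tfrac34 m^2$). That argument is correct, and the definite case is indeed the one the paper uses: under its standing hypotheses ($J_F=0$, the quartic splits in $\mathbb{R}$, so $A_0<0$ and, by (\ref{22}), $A_4<0$), the form $W(x,y)=2A_1A_4x^2+A_3^2xy+2A_3A_4y^2$ to which the proposition is applied is a real scalar multiple of the positive definite covariant $m(x,y)$ of Lemma \ref{P882}, hence definite.

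Two caveats separate your write-up from the proposition as literally stated, and you correctly diagnose both as defects of the statement rather than of your argument; but be explicit that this means the statement itself needs repair. First, the statement admits indefinite forms ($D<0$), and your sign-flip only turns negative definite into positive definite; it does not reach the indefinite case. As you note, for indefinite forms representing zero the claim is simply false ($Q=xy$ has $|D|=\tfrac14$, while every nonzero value of $Q$ on $\mathbb{Z}^2$ has absolute value at least $1>\sqrt{1/3}$). For indefinite forms \emph{not} representing zero the claim is true, with the sharper Markov constant $\sqrt{4|D|/5}$, but that requires a continued-fraction type argument you do not supply. Second, your reduction gives only the non-strict bound $m\le\sqrt{4D/3}$, with equality exactly when $Q$ is a multiple of a form $SL_2(\mathbb{Z})$-equivalent to $x^2+xy+y^2$; the strict inequality asserted in Proposition \ref{her2} and used at (\ref{Rr2}) therefore needs the hexagonal case excluded. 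You flag this and defer the ``mild verification,'' but nothing in the hypotheses of the proposition excludes it --- a multiple of the hexagonal form satisfies them and violates the strict bound --- so no such verification can exist at this level of generality; the honest fix is to state the conclusion with $\le$ (or to add a hypothesis ruling out the hexagonal class) and to check that the later estimates, which mostly invoke $|A_4|\le 4I$ anyway, tolerate the equality case.
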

  \begin{proof}
  See \cite{Cas2}, page 31.
 \end{proof}
 
  Proposition \ref{her2} implies that  we can choose $l$ and $q$,  such that
  \begin{eqnarray}\label{Rr2}\nonumber
 0 <|A'_{4}| &=& \frac{1}{|4A_{3}^{2}A_{4}|} (2A_{1}A_{4}l^{2} + A_{3}^{2}lq + 2A_{4}A_{3}q^{2})^{2} \\  \nonumber
 & < &   \frac{1}{|4A_{3}^{2}A_{4}|} \left|\frac{1}{3}(A_{3}^{4} - 16A_{1}A_{4}^{2}A_{3}) \right| \\ 
& = & 4|I|,
 \end{eqnarray}
where the last equality comes from (\ref{c22}).

We have shown that the Hessian of $F$ satisfies the following formula.
\begin{eqnarray*}
H(x , y) & =& A_{0}x^{4} + A_{1}x^{3}y + A_{2}x^{2}y^{2} + A_{3}xy^{3} + A_{4}y^{4} \\ \nonumber
& = & \frac{1}{4A_{3}^{2}A_{4}}(2A_{1}A_{4}x^{2} + A_{3}^{2}xy + 2A_{4}A_{3}y^{2})^{2}.
\end{eqnarray*} 
We will need some results due to Cremona \cite{Cre2}. Since we are using different notations in this paper, we will summarize Propositions $6$ and $8$  of \cite{Cre2} in Lemmas \ref{P682} and \ref{P882}. In particular, we note that the quartic polynomial $g_{4}(X)$ in \cite{Cre2} is equal to $\frac{-1}{3}H(x , 1)$ and its leading coefficient  is equal to $-A_{0}/3$.
\begin{lemma}\label{P682}
 Suppose $F(x , y)$ is a quartic form with invariants $I$ and $J$ and Hessian $H(x , y)$. Let $\phi$ be a root of $X^3 -3I + J$. Then 
$$
-\frac{1}{9}H(x , y) + \frac{4}{3} \phi F(x , y) = m(x , y)^2,
$$
where $m(x , y)$ is a quadratic covariant of $F(x , y)$.
\end{lemma}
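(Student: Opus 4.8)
The plan is to prove the equivalent statement that the quartic
$$Q_{\phi}(x,y) \;=\; -\tfrac19\,H(x,y) + \tfrac43\,\phi\,F(x,y)$$
is the square of a quadratic form precisely when $\phi$ is a root of the resolvent cubic; the covariant $m$ is then simply a square root of $Q_{\phi}$. The key tool is the following criterion: a nonzero binary quartic $Q$ is the square of a quadratic form if and only if its Hessian $\mathcal H(Q)=Q_{xx}Q_{yy}-Q_{xy}^{2}$ is a scalar multiple of $Q$. One implication is a short computation: if $Q=m^{2}$ with $m$ a quadratic of discriminant $\Delta$, then $\mathcal H(Q)=-12\,\Delta\,Q$. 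For the converse, suppose $\mathcal H(Q)=c\,Q$. If $c\neq 0$ then $Q$ and $\mathcal H(Q)$ share their roots with equal multiplicities; since a root of multiplicity $\mu$ of a binary quartic is a root of its Hessian of multiplicity $2\mu-2$ (so a simple root is not a root of the Hessian), every root of $Q$ must have multiplicity exactly $2$, whence $Q$ is a perfect square. The case $c=0$ gives $\mathcal H(Q)\equiv 0$, so that $Q$ is the fourth power of a linear form, and again a square.

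Granting the criterion, I would compute the Hessian of the pencil member $Q_{\phi}=\lambda F+\mu H$ with $\lambda=\tfrac43\phi$ and $\mu=-\tfrac19$. As the Hessian is quadratic in the form,
$$\mathcal H(\lambda F+\mu H)=\lambda^{2}\,\mathcal H(F)+\lambda\mu\,\mathcal B(F,H)+\mu^{2}\,\mathcal H(H),$$
where $\mathcal B(F,H)=F_{xx}H_{yy}+H_{xx}F_{yy}-2F_{xy}H_{xy}$ is the polarized Hessian and $\mathcal H(F)=H$ by definition. Since $H$ is itself a covariant of $F$, both $\mathcal B(F,H)$ and $\mathcal H(H)$ are covariants of $F$ of order $4$, and a count of order, degree and weight shows that the only possibilities are $\mathcal B(F,H)=b\,I\,F$ and $\mathcal H(H)=p\,I\,H+q\,J\,F$ for universal rational constants $b,p,q$. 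Hence
$$\mathcal H(Q_{\phi})=\bigl(bI\lambda\mu+qJ\mu^{2}\bigr)F+\bigl(\lambda^{2}+pI\mu^{2}\bigr)H.$$

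Imposing $\mathcal H(Q_{\phi})=c\,(\lambda F+\mu H)$ and eliminating $c$ collapses to the single scalar equation $\lambda^{3}+(p-b)I\lambda\mu^{2}-qJ\mu^{3}=0$. Substituting $\lambda=\tfrac43\phi$ and $\mu=-\tfrac19$ turns this into a monic cubic in $\phi$ whose quadratic term vanishes, whose linear coefficient is a fixed multiple of $I$, and whose constant term is a fixed multiple of $J$; evaluating $b,p,q$ on two explicit forms (for instance $F=x^{4}+y^{4}$, which determines $p-b$, and $F=x^{4}+xy^{3}$, for which $J\neq 0$, which determines $q$) shows this cubic to be exactly the resolvent cubic of the statement. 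The criterion then yields $Q_{\phi}=m^{2}$ for each of its three roots, and the resulting three quadratics are the quadratic covariants $m$, one for each of the three ways of splitting the four roots of $F$ into two pairs. The relations $I_{H}=12^{2}I^{2}$ and $J_{H}=12^{3}(2I^{3}-J^{2})$ of (\ref{IHF2}) and (\ref{JHF2}) are consistent with these values of the constants and provide a useful cross-check.

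I expect the genuine obstacle to be the converse half of the Hessian criterion, that is, promoting $\mathcal H(Q)\propto Q$ from a necessary to a sufficient condition for $Q$ to be a square. The invariants $I$ and $J$ cannot by themselves detect this distinction, since a quartic with a single double root and a genuine perfect square both satisfy $4I^{3}=J^{2}$; the argument must therefore rely on the multiplicity bookkeeping for the roots of the Hessian, with the exceptional cases $c=0$ and roots of multiplicity $\geq 3$ disposed of separately. The only other point requiring care is the consistent determination of the rational constants $b,p,q$, and the attendant signs, relative to the chosen normalizations of $I$, $J$ and of the Hessian, so that the extracted cubic agrees on the nose with the resolvent cubic named in the statement.
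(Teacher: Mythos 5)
The paper offers no proof of this lemma at all beyond citing part (vi) of Proposition 6 of Cremona's paper, so your self-contained argument is necessarily a different route, and its skeleton is sound. The Hessian criterion is classical and your proof of it works: at a simple root one gets $\mathcal H(Q)=-(n-1)^2Q_x^2\neq 0$ by Euler's identity, multiplicity-$\mu$ roots appear in the Hessian with multiplicity $2\mu-2$, and the cases $c=0$ and multiplicity $\geq 3$ are quickly dispatched. The polarization identity $\mathcal H(\lambda F+\mu H)=\lambda^2H+\lambda\mu\,\mathcal B(F,H)+\mu^2\mathcal H(H)$ is correct, and the degree--order count in the covariant ring of a binary quartic does force $\mathcal B(F,H)=bIF$ and $\mathcal H(H)=pIH+qJF$. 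Note also that only the ``if'' direction of the proportionality computation is needed for the lemma (the cubic relation lets you define $c=(\lambda^2+pI\mu^2)/\mu$ directly), so linear independence of $F$ and $H$ never has to be addressed.

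The one step you left unexecuted, however, is exactly where the trouble lies, and it is the step you yourself flagged as ``requiring care.'' Carrying out your recipe: $F=x^4+y^4$ gives $H=144x^2y^2$, $I=12$, $\mathcal B(F,H)=3456F$ and $\mathcal H(H)=-1728H$, so $b=288$ and $p=-144$; and $F=x^4+xy^3$ gives $I=0$, $J=27$ (in the paper's convention), $H=72x^3y-9y^4$, $\mathcal H(H)=-46656F$, so $q=-1728$. Substituting into $\lambda^3+(p-b)I\lambda\mu^2-qJ\mu^3=0$ with $\lambda=\tfrac43\phi$, $\mu=-\tfrac19$ yields $\phi^3-3I\phi-J=0$: the sign of $J$ is \emph{opposite} to the cubic named in the statement (read $X^3-3IX+J$; an $X$ is missing there). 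This is not a defect of your framework but a sign slip in the paper's statement: it transcribes Cremona's resolvent cubic $X^3-3IX+J_{\mathrm{Cre}}$ without flipping the sign, even though (as the paper itself remarks) Cremona's $J$ is the negative of the paper's $J$. You can confirm with $F=x^4+xy^3$: the root $\phi=3$ of $X^3-3IX-J$ gives $-\tfrac19H+4F=4x^4-8x^3y+4xy^3+y^4=(2x^2-2xy-y^2)^2$, while the root $\phi=-3$ of the stated cubic gives $-4x^4-8x^3y-4xy^3+y^4$, which is not a square even over $\mathbb C$. So your proof, completed honestly, proves the (corrected) lemma; but your closing claim that the extracted cubic agrees ``on the nose'' with the resolvent cubic named in the statement is false as written, and the argument is only finished once you identify this convention mismatch and restate the cubic as $X^3-3IX-J$ in the paper's normalization.
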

\begin{proof}
See part (vi) of Proposition $6$ of \cite{Cre2}.
\end{proof}

\begin{lemma}\label{P882}
Let F(x , y) be a quartic form with real coefficients and the leading coefficient $a_{0}$. Suppose that $F(x , 1) = 0$ has $4$ real roots. Order the roots $\phi_{i}$ of $X^3 -3I + J$, with $4a_{0} \phi_{1} > 4a_{0} \phi_{2} > 4a_{0} \phi_{3}$. Set $\phi = \phi_{2}$. Then $m(x , y)$ is a positive definite quadratic form with real coefficients, where $m(x , y)$ is the covariant of $F(x , y)$ defined in Lemma $\ref{P682}$.
\end{lemma}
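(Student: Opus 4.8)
The plan is to translate the assertion into a positivity statement for a single real binary quartic and then exploit the fact that $\phi_2$ is the \emph{median} of the three resolvent roots. Throughout I write $f(s)=F(s,1)$ and $h(s)=H(s,1)$. Applying Euler's identity $xF_x+yF_y=4F$ twice and simplifying gives the clean formula $h(s)=12\,f(s)f''(s)-9\,f'(s)^2$ (one checks the leading coefficient is $A_0=3(8a_0a_2-3a_1^2)$, matching (\ref{Ai2})). In particular, at every simple real root $\alpha_k$ of $f$ we have $f(\alpha_k)=0$ and hence $h(\alpha_k)=-9\,f'(\alpha_k)^2<0$. For the irreducible forms of interest the four real roots are distinct, i.e. $D_F>0$, which I will use below.

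Next I would record what Lemma \ref{P682} gives numerically. For each root $\phi_i$ of the resolvent cubic put $D_i(s):=12\phi_i f(s)-h(s)$, so that Lemma \ref{P682} reads $D_i=9\,m_i^2$. Since $27D_F=4I^3-J^2$, the discriminant of $X^3-3IX+J$ equals $108I^3-27J^2=27(4I^3-J^2)=729\,D_F>0$, so the cubic has three distinct real roots $\phi_1,\phi_2,\phi_3$, and the normalization $4a_0\phi_1>4a_0\phi_2>4a_0\phi_3$ makes $\phi_2$ their median for either sign of $a_0$. Because the $\phi_i$ are real, each $D_i$ is a \emph{real} binary quartic which, by Lemma \ref{P682}, is a perfect square in $\mathbb{C}[x,y]$; a real quartic that is a square over $\mathbb{C}$ is $\pm(\text{real quadratic})^2$, so $D_i$ has constant sign. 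Evaluating at a root of $f$ gives $D_i(\alpha_k)=-h(\alpha_k)=9\,f'(\alpha_k)^2>0$, so that sign is $+$: every $m_i$ is a genuine \emph{real} quadratic form and $D_i\ge 0$ everywhere. Promoting all three covariants to real nonnegative forms is where the hypothesis that $F$ splits over $\mathbb{R}$ (equivalently $D_F>0$) is essential, and I expect this to be the main point to get exactly right.

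The heart of the argument is then a two-sided squeeze singling out the \emph{median} index. Suppose $D_2(s_*)=0$ for some real $s_*$. If $s_*$ were a root of $f$ then $D_2(s_*)=9f'(s_*)^2>0$, so $f(s_*)\neq 0$. From the elementary identity $D_i-D_j=12(\phi_i-\phi_j)f$ I obtain
\[
D_1(s_*)=12(\phi_1-\phi_2)\,f(s_*),\qquad D_3(s_*)=12(\phi_3-\phi_2)\,f(s_*).
\]
Since $\phi_2$ is the median, $(\phi_1-\phi_2)$ and $(\phi_3-\phi_2)$ have opposite signs, while $D_1(s_*)\ge 0$ and $D_3(s_*)\ge 0$; these force $f(s_*)$ to have two opposite signs at once, hence $f(s_*)=0$, a contradiction. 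Therefore $D_2$ has no real zero, so $D_2=9\,m_2^2>0$ everywhere and $m_2$ is a definite quadratic form. Note the same squeeze \emph{fails} for $\phi_1$ and $\phi_3$: for the extreme roots the two inequalities become consistent, which is precisely why only the median covariant is forced to be definite.

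Finally I would upgrade definiteness to \emph{positive} definiteness. The relation $m_2^2=\tfrac{4}{3}\phi_2 F-\tfrac{1}{9}H$ of Lemma \ref{P682} determines $m_2$ only up to sign, so I fix the sign making $m_2$ positive definite; concretely $m_2(\alpha_k)^2=f'(\alpha_k)^2\neq 0$ shows $m_2$ is nonzero and of constant sign on the four roots, and that sign is taken to be $+$. This completes the proof. The only delicate ingredients are the reality and nonnegativity of all three covariants (paragraph two) and this harmless sign normalization; the central squeeze is entirely elementary once those are in place.
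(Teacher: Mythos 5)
Your argument is correct in substance, but it takes a genuinely different route from the paper: the paper does not prove Lemma \ref{P882} at all, it simply cites part (ii) of Proposition 8 of \cite{Cre2}, adding only the remark that Cremona's quantity $z$ equals $-A_0$, which is positive here. So what you have written is a self-contained alternative. Its ingredients are sound: the identity $H(s,1)=12f(s)f''(s)-9f'(s)^2$ (correctly obtained from Euler's identity, with leading coefficient $A_0$ matching (\ref{Ai2})), which forces $H<0$ at every simple real root of $f$; the observation that each real quartic $D_i=12\phi_i F-H$, being a square over $\mathbb{C}$ by Lemma \ref{P682}, must be $\pm(\textrm{real quadratic})^2$ and hence of constant sign, with the sign pinned to $+$ by evaluating at the real roots of $F$; and the median squeeze via $D_i-D_j=12(\phi_i-\phi_j)F$, which isolates $\phi_2$ and shows exactly why the extreme roots cannot yield definite covariants. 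This makes transparent where the hypotheses (four real roots; $\phi_2$ the median for either sign of $a_0$) actually enter, something the citation hides; the price is that you re-derive a special case of Cremona's systematic classification of the real types of the three covariants. (The sign ambiguity in the resolvent cubic --- the paper's $J$ is the negative of Cremona's, and the paper's ``$X^3-3I+J$'' is a typo for a cubic in $X$ --- is immaterial both to your argument, which uses only the discriminant $729D_F$ and the medianness of $\phi_2$, and to the paper's application, where $J_F=0$.)

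One step needs a patch. From ``$D_2(s_*)\neq 0$ for all real $s_*$'' you conclude that $m_2$ is definite, but that inference fails for the degenerate possibility $m_2=c\,y^2$: in that case $D_2(s,1)\equiv 9c^2>0$ has no real zero, yet $m_2$ is only semidefinite. Equivalently, your dehomogenized squeeze never inspects the point at infinity. The fix uses nothing new: run the identical squeeze at $(x,y)=(1,0)$. If $D_2(1,0)=0$, then $D_1(1,0)=12(\phi_1-\phi_2)a_0\ge 0$ and $D_3(1,0)=12(\phi_3-\phi_2)a_0\ge 0$, and since $\phi_1-\phi_2$ and $\phi_3-\phi_2$ have opposite signs this forces $a_0=0$, contradicting that $F(x,1)$ has four roots. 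With the squeeze stated on all of $\mathbb{P}^1(\mathbb{R})$ rather than only for finite $s$, the form $D_2$ is strictly positive away from the origin, so $m_2$ never vanishes on $\mathbb{R}^2\setminus\{(0,0)\}$ and is definite; your sign normalization then makes it positive definite.
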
 
\begin{proof}
%.$h(x , y) = \sqrt{z} m(x , y)$, where $m(x , y)$ is the quadratic form in Lemma ($\ref{P682}$) and $z$ is the leading coefficient of 
This is part (ii) of Proposition $8$ of \cite{Cre2}. Note that the quantity $z$ in that Proposition  is equal to $-A_{0}$ and therefore a positive value in our case.
\end{proof}

Following Definition $4$ of \cite{Cre2},  we say that the quartic form $F(x , y) = a_{0}x^{4} + a_{1}x^{3}y + a_{2}x^{2}y^{2} + a_{3}x y^{3}+ a_{4}y^{4}$ with positive discriminant is \emph{reduced} if and only if the positive definite quadratic form  $m(x ,y)$ is reduced. Here, we remark that the real quadratic form $f(x , y) = ax^{2} + bxy + cy^{2}$ is called \emph{reduced} if 
 $$
 |b| \leq a \leq c .
 $$

\begin{lemma}\label{red2}
Let $F$ be the quartic form  in Theorem $\ref{main22}$ and $H$ be its Hessian. If $F$ is reduced then for integers $x, y$ we have
 %If the quartic form $F(x , y)$ in Theorem \ref{main22} is reduced, then for its Hessian $H(x , y)$ we have
 $$
 \left|H(x , y)\right| \geq 36 \, I y^4.
 $$
 \end{lemma}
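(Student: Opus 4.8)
The plan is to reduce everything to the positive-definite quadratic covariant $m(x,y)$ and then run a completing-the-square estimate on it. First I would use the hypothesis $J_F=0$ to locate the relevant root of the resolvent cubic: with $J=0$ the cubic $X^3-3IX+J$ factors as $X(X^2-3I)$, whose three roots are $0$ and $\pm\sqrt{3I}$, and since $I>0$ (because $27D=4I^3$ with $D>0$) these are real. By the ordering in Lemma \ref{P882} the middle root is $\phi=\phi_2=0$, so Lemma \ref{P682} collapses to $-\tfrac19 H(x,y)=m(x,y)^2$, i.e. $|H(x,y)|=9\,m(x,y)^2$, where by Lemma \ref{P882} the form $m$ is positive definite. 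Writing $m(x,y)=px^2+qxy+sy^2$, the hypothesis that $F$ is reduced means precisely that $m$ is reduced, so $|q|\le p\le s$.

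Next I would pin down the discriminant of $m$ in terms of $I$, since this is what converts the estimate into a bound with $I$ as the scale. Squaring $m$ and computing the weight-$4$ invariant gives the elementary identity $I_{m^2}=(q^2-4ps)^2=(\operatorname{disc}m)^2$; combining this with $H=-9m^2$ (so that $I_H=81\,I_{m^2}$) and the invariant relation $I_H=12^2 I_F^2$ from (\ref{IHF2}) yields $81(\operatorname{disc}m)^2=144\,I^2$, hence $|\operatorname{disc}m|=\tfrac43 I$, i.e.
\[
4ps-q^2=\tfrac43 I .
\]
This is the single input that ties the geometry of $m$ to $I$, and it is exactly the place where the restriction $J_F=0$ (through $\phi=0$ and the identities of Section \ref{EF2}, e.g. (\ref{22})) does its work.

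The quantitative heart of the argument is a lower bound for $m$ along fixed $y$. Completing the square,
\[
m(x,y)=p\Big(x+\tfrac{q}{2p}\,y\Big)^2+\frac{4ps-q^2}{4p}\,y^2=p\Big(x+\tfrac{q}{2p}\,y\Big)^2+\frac{I}{3p}\,y^2\ \ge\ \frac{I}{3p}\,y^2 ,
\]
so $|H(x,y)|=9\,m(x,y)^2\ge \dfrac{I^2}{p^2}\,y^4$, and everything comes down to an upper bound on the leading coefficient $p$. I expect this last point to be the main obstacle. Reducedness alone gives only $\tfrac43 I=4ps-q^2\ge 3p^2$, hence $p\le\tfrac23\sqrt I$, which produces $|H(x,y)|\ge \tfrac94\,I\,y^4$; to reach the stated $36\,I\,y^4$ one needs the much stronger $p\le \tfrac16\sqrt I$. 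The difficulty is genuine, because the worst case occurs exactly when $\tfrac{q}{2p}y$ is close to an integer, so the square term contributes nothing and integrality of $x$ gives no help; closing the gap therefore cannot come from the reduction inequalities by themselves and must exploit the additional structure available here — the irreducibility of $F$ and the decomposition $F=\tfrac{1}{8\sqrt{3IA_4}}(\xi^4-\eta^4)$, which forces the conjugate quadratic covariants $m_1,m_3$ (attached to the roots $\pm\sqrt{3I}$) to be irrational and thereby constrains how balanced $m$ can be. Making that constraint explicit, and thus sharpening the bound on $p$ to the value needed, is the step I would expect to require the most care.
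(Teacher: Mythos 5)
Your proposal follows exactly the same route as the paper's proof: take $\phi=\phi_2=0$ in Lemma \ref{P682} (legitimate, since $J=0$ makes the resolvent roots $0,\pm\sqrt{3I}$ and the middle one is $0$), write $-\tfrac19 H=m^2$ with $m$ positive definite (Lemma \ref{P882}) and reduced, establish $4AC-B^2=\tfrac43 I$ for $m=Ax^2+Bxy+Cy^2$ (the paper gets this from the explicit expression of $m^2$ via $W$ and (\ref{c22}); you get it from $I_H=12^2I_F^2$ in (\ref{IHF2}) together with $I_{m^2}=(4AC-B^2)^2$ --- both are correct and essentially equivalent), and then bound the real minimum $\tfrac{4AC-B^2}{4A}=\tfrac{I}{3A}$ from below using the reduction inequalities $A^2\le AC\le\tfrac13(4AC-B^2)=\tfrac49 I$.

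The point at which you stopped is not a defect of your argument; it is a genuine gap in the paper's own proof. From $A\le\tfrac23\sqrt I$ the paper's chain of inequalities yields $m(x,y)\ge\tfrac12\sqrt I\,y^2$, hence $|H(x,y)|\ge\tfrac94 I y^4$, exactly as you computed; nevertheless the paper concludes ``$m(x,y)\ge 2\sqrt I y^2$'', a factor $4$ (so a factor $16$ in $|H|$) beyond what was derived, with no intervening step. There is no hidden input you failed to find: the paper's proof makes no use of irreducibility or of the $\xi^4-\eta^4$ structure at this point. Your observation that reducedness alone cannot give more is also correct, since the reduced form with $A=B=C=\tfrac23\sqrt I$ has real minimum exactly $\tfrac12\sqrt I$; so either extra structure of the covariant $m$ must be invoked, or the constant in Lemma \ref{red2} must be weakened to $\tfrac94 I y^4$, in which case the bound invoked after the lemma becomes $|H(x,y)|\ge\tfrac34 h^3\sqrt{3I}$ rather than $12h^3\sqrt{3I}$, and the threshold $|y|\ge h^{3/4}/(3I)^{1/8}$ in Theorem \ref{main22} and the constants downstream (e.g. in (\ref{98h2})) would need to be adjusted accordingly.
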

\begin{proof}
Suppose that our quartic form $F(x , y)$  is reduced.  Taking $\phi = 0$ in Lemma \ref{P682}, we know that the algebraic covariant $\frac{-1}{9} H(x , y)$
is the square of a quadratic form, say
$$
\frac{-1}{9} H(x , y) = m^2(x , y).
$$
  We assume that $y \neq 0$. Put 
 $$
 m(x , y) = y^2 m(z) = y^2 \left(Az^2 + B z + C \right),
 $$
 where $z = \frac{x}{y}$.
 Note that  $m(z)$ assumes a minimum equal to $ \frac{4AC - B^2}{4A} $ at $ z = \frac{-B}{2A}$. Since 
$$m^{2}(x , y) = \frac{1}{36A_{3}^2A_{4}}(2A_{1}A_{4}x^{2} + A_{3}^{2}xy + 2A_{4}A_{3}y^{2})^{2},$$
 by (\ref{c22}), we get 
 $$
 4AC - B^2 = \frac{16A_{1}A_{3}A_{4}^2 - A_{3}^4}{-36A_{3}^2 A_{4}} = \pm\frac{4}{3}I .
 $$
Recall that  $A_{0} < 0 $ and hence, by (\ref{22}), $A_{4} < 0$. 
 Since $I > 0$ and $m(x , y)$ is reduced, we have $4AC - B^2 > 0$ and
  $$
 A^2 \leq AC \leq \frac{1}{3} (4AC - B^2) = \frac{4}{9} I.
 $$
 Therefore, 
 $$
 m(x , y) \geq 2\sqrt{I}y^2,
 $$
\end{proof}
 So we can assume that $\left| H(x , y)\right| \geq h^{3} 12\sqrt{3I}$ when looking for pairs of solutions $(x , y)$ with $ |y| \geq \frac{h^{3/4}}{ (3I)^{1/8}}$.

 %------------------------------------------
\section{Reduction To A Diagonal Form}\label{RDF2}
 %------------------------------------------
Our goal in this section will be  to  reduce the problem at hand to consideration of diagonal forms over a suitable imaginary quadratic field. The method of Thue-Siegel is particularly well suited for application to such forms. We will show that
\begin{lemma}\label{rF2}
Let $F$ be the binary form in Theorem $\ref{main2}$.  Then 
$$
F(x , y) = \frac{1}{96 A_{3}^2 A_{4}\sqrt{-3I}} \left(\xi^{4}(x , y) - \eta^{4}(x , y) \right),
$$
where $\xi$ and $\eta$ are complex conjugate linear forms in $x$ and $y$.
\end{lemma}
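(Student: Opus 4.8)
The plan is to realize the difference of fourth powers from the middle root of the resolvent cubic. Since $27 D_F = 4I^3 - J^2 = 4I^3$ and $D_F > 0$, the invariant $I$ is a positive real number; hence with $J_F = 0$ the cubic $X^3 - 3IX + J$ appearing in Lemma \ref{P682} is $X(X^2 - 3I)$, with real roots $0, \pm\sqrt{3I}$, of which $0$ is the middle one in the ordering of Lemma \ref{P882}. Taking $\phi = 0$ in Lemmas \ref{P682} and \ref{P882} gives $-\frac{1}{9}H = m^2$ with $m$ a \emph{positive definite} real quadratic form; comparing with the identity $H = \frac{1}{4A_3^2 A_4}W^2$ proved above, one may take $m = \frac{1}{6\sqrt{-A_3^2 A_4}}\,W$, which is real since $A_4 < 0$.

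First I would factor $m$ over $\mathbb{C}$. A real positive definite binary quadratic form is a product of two complex conjugate linear forms, so I may write $m = \xi\eta$ with $\eta = \bar\xi$. Then $H = -9\,m^2 = -9\,\xi^2\eta^2$, so the Hessian of $F$ is a scalar multiple of $(\xi\eta)^2 = \xi^2\eta^2$. Using $\xi,\eta$ as coordinates, write $F = \sum_{j=0}^{4} c_j\,\xi^{4-j}\eta^j$. Because $F$ has real coefficients and conjugation interchanges $\xi$ and $\eta$, the coefficients satisfy $c_{4-j} = \bar c_j$; in particular $c_2 \in \mathbb{R}$.

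The heart of the argument is to force $F$ into diagonal shape. Computing the Hessian in the coordinates $\xi,\eta$ (it remains proportional to $\xi^2\eta^2$, being a covariant) and setting to zero the coefficients of $\xi^4$ and $\xi^3\eta$ gives $3c_1^2 = 8 c_0 c_2$ and $c_1 c_2 = 6 c_0 c_3$, the vanishing of the coefficients of $\eta^4$ and $\xi\eta^3$ being the conjugates of these. Assuming $c_1 \neq 0$ and eliminating $c_2$ between the two relations forces $|c_1|^2 = 16\,|c_0|^2$ and $c_2^2 = 36\,|c_0|^2$; but then the surviving coefficient of $\xi^2\eta^2$ in the Hessian, namely $144\,|c_0|^2 + 18\,|c_1|^2 - 12\,c_2^2$, vanishes, contradicting $H \neq 0$. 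Hence $c_1 = c_3 = 0$, and then $3c_1^2 = 8c_0 c_2$ gives $c_2 = 0$ (here $c_0 \neq 0$, since otherwise $\eta^2 \mid F$ and $F$ would have a repeated root, contradicting $D_F > 0$). Therefore $F = c_0\,\xi^4 + \bar c_0\,\eta^4$.

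Finally I would put this in the stated form and pin the constant. Replacing $\xi$ by $z\xi$ and $\eta$ by $\bar z\eta$ keeps $\eta = \bar\xi$, and choosing $z$ with $z^4$ real turns $c_0\xi^4 + \bar c_0\eta^4$ into $\gamma(\xi^4 - \eta^4)$ for a purely imaginary $\gamma$, consistent with $\sqrt{-3I}$ being imaginary. The value of $\gamma$, together with the correct branch of the square root, is fixed by matching Hessians: comparing $-144\,\gamma^2\delta^2\,\xi^2\eta^2$ (where $\xi = ax+by$ and $\delta = a\bar b - \bar a b$) with $H = \frac{1}{4A_3^2 A_4}W^2$, and using $\operatorname{disc}(W) = 48 A_3^2 A_4 I$ from (\ref{c22}) together with $I_H = 12^2 I^2$ from (\ref{IHF2}), yields the coefficient $\frac{1}{96 A_3^2 A_4\sqrt{-3I}}$. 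I expect the two genuinely substantive points to be the diagonalization in the third paragraph, where the hypotheses $J_F = 0$ and that $F$ splits in $\mathbb{R}$ are both essential (the former to make $0$ a root of the resolvent, the latter to make $m$ definite and hence a product of \emph{conjugate} linear forms), and the careful bookkeeping of the constant and the branch of $\sqrt{-3I}$ in the last step; everything else is routine.
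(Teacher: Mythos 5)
Your proposal is correct, and it reaches the diagonal form by a mechanism genuinely different from the paper's. Both arguments share the same skeleton: factor the quadratic form whose square gives the Hessian into linear forms, pass to those coordinates, and use the covariance of the Hessian (which stays proportional to $\xi^2\eta^2$) to constrain the coefficients of $\Phi = \sum_j c_j\,\xi^{4-j}\eta^j$. But the paper factors $\xi\eta = W = 2A_1A_4x^2 + A_3^2xy + 2A_3A_4y^2$ over $\mathbb{C}$ with no conjugacy imposed, and to kill the middle coefficient it invokes the Maple-verified syzygy $-10a_4A_0 + 2a_3A_1 - a_2A_2 + a_1A_3 - 2a_0A_4 = 6J$: applied to $\Phi$, the vanishing $J_\Phi = 0$ and $A'_0 = A'_1 = A'_3 = A'_4 = 0$ give $a'_2A'_2 = 0$, hence $a'_2 = 0$, after which $A'_0 = A'_4 = 0$ force $a'_1 = a'_3 = 0$. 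You instead start from Cremona's Lemmas \ref{P682} and \ref{P882} with $\phi = 0$ (legitimate: $0$ is the middle root of $X^3 - 3IX$ whatever the sign of $a_0$), obtain a positive definite square root $m$ of $-\tfrac19 H$, and exploit the reality structure $c_{4-j} = \bar c_j$: your elimination showing that $c_1 \neq 0$ forces $|c_1|^2 = 16|c_0|^2$, $c_2^2 = 36|c_0|^2$ and hence a vanishing Hessian is correct (indeed $18\cdot 16 + 144 - 12\cdot 36 = 0$), and it replaces the computer-verified identity by an elementary argument. The trade-off is that your route needs the real-splitting hypothesis (definiteness of $m$) already at this stage, whereas the paper's proof of this lemma works for an arbitrary complex factorization and defers all conjugacy and rationality statements to Lemma \ref{nr2}; conversely, a bonus of your route is that it actually proves the ``complex conjugate'' clause of the statement inside the proof, which the paper leaves implicit here. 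For the constant, the paper uses $A'_2 = \Delta^2/(4A_3^2A_4) = 144\,a'_0a'_4$ together with the Jacobian identities (\ref{matrr2}) and (\ref{IA2}); your matching of $-144\gamma^2\delta^2(\xi\eta)^2$ against $\tfrac{1}{4A_3^2A_4}W^2$ with $\operatorname{disc}(W) = 48A_3^2A_4I$ yields $\gamma^2 = -\bigl(27648\,A_3^4A_4^2\,I\bigr)^{-1}$, which is exactly $\bigl(96A_3^2A_4\sqrt{-3I}\,\bigr)^{-2}$, so the constant is right — but be explicit that this final comparison is made in the normalization $\xi\eta = W$ rather than $\xi\eta = m$ (harmless, since your unit rescaling $\xi \mapsto w\xi$, $\eta \mapsto \bar w\eta$ with $|w| = 1$ preserves the product), and note that the appeal to $I_H = 12^2I_F^2$ is superfluous.
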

\begin{proof}
 Let  $H(x , y) = A_{0}x^{4} + A_{1}x^{3}y + A_{2} x^{2}y^{2} + A_{3} xy^{3} + A_{4}y^{4} $ with $A_{3}A_{4} \neq 0,$ be the Hessian of $F(x , y)$.
We can factor $2A_{1}A_{4}x^{2} + A_{3}^{2}xy + 2A_{4}A_{3}y^{2}$ over $\mathbb{C}$ as
\begin{equation}\label{HaHa}
 \xi (x , y)\eta(x , y) = 2A_{1}A_{4}x^{2} + A_{3}^{2}xy + 2A_{4}A_{3}y^{2},
 \end{equation}
where $\xi$ and $\eta$ are linear forms. So we may write
$$
  x = m \xi + l \eta 
  $$
  $$y = p\xi + q \eta,$$
for some $m$, $l$, $p$, $q  \in \mathbb{C}$. Therefore,
 \begin{eqnarray*}
 F(x , y) & = &F(m\xi +l\eta , p\xi + q\eta) \\
&= &a'_{0}\xi^{4} + a'_{1}\xi^{3}\eta + a'_{2}\xi^{2}\eta^{2} +a'_{3}\xi \eta^{3} + a'_{4}\eta^{4} \\
& = & \Phi(\xi , \eta).
  \end{eqnarray*}
 The Hessian $H'(\xi , \eta)$ of  $\Phi(\xi , \eta)$ satisfies
 \begin{eqnarray*}
 H'(\xi , \eta)  & = & A'_{0}\xi^{4} + A'_{1}\xi^{3}\eta + A'_{2}\xi^{2}\eta^{2} +A'_{3}\xi \eta^{3} + A'_{4}\eta^{4} \\ \nonumber
  &= &\Delta^{2} H(x , y) = \frac{\Delta^{2}}{4A_{3}^{2}A_{4}}\xi^{2} \eta^{2} .
 \end{eqnarray*}
 Hence,
 \begin{equation}\label{H'2}
 A'_{0} = A'_{1} = A'_{3} = A'_{4} = 0 ; \    \    A'_{2} = \Delta^{2} \frac{1}{4A_{3}^{2}A_{4}} .
 \end{equation}
 
 On the other hand, 
   $$
   A'_{0} = 3(8a'_{0}a'_{2} - 3 a^{'2}_{1}) 
   $$
   and
 $$
 A'_{1} = 12(6a'_{0}a'_{3} - a'_{1}a'_{2}).
 $$
  Using Maple, it  is easy to check that for any form $F(x , y)$, 
   $$
   -10a_{4}A_{0} + 2a_{3} A_{1} - a_{2}A_{2} + a_{1}A_{3} -2a_{0}A_{4} = 6J .
   $$
  So, for $\Phi(\xi , \eta)$, we obtain
 $$
 -10a'_{4}A'_{0} + 2a'_{3} A'_{1} - a'_{2}A'_{2} + a'_{1}A'_{3} -2a'_{0}A'_{4} = 6J_{\Phi} = 6\Delta^{4}J_{F} = 0 ,
 $$
where $a'_{i}$ are the coefficients of $\Phi$ and $A'_{i}$ are the coefficients of its Hessian.
Therefore, by (\ref{H'2}),
$$
a'_{2} = 0 
$$ 
and from the expressions for $A'_{0}$  and $A'_{4}$ respectively that result from  (\ref{Ai2}),
$$
a'_{1} = a'_{3} = 0 ,
$$
whereby,
$$
F(x , y) = \Phi (\xi , \eta) = a'_{0}\xi^{4} + a'_{4}\eta^{4}. 
$$
Observe that if 
$$ 2A_{1}A_{4}x^{2} + A_{3}^{2}xy + 2A_{4}A_{3}y^{2} =  (\alpha x + \beta y) (\gamma x + \delta y)$$
then for any complex number $\lambda$, in (\ref{HaHa}) we may take 
$\xi = \lambda (\alpha x + \beta y)$ and   $\eta =\mu (\gamma x + \delta y)$, where  $\lambda \mu =1$.
Our goal now is to determine the values of $\lambda$ and $\mu = \frac{1}{\lambda}$ in  $\xi =\lambda (\alpha x + \beta y)  $ and $\eta = \mu (\gamma x + \delta y )$, so that $a'_{4} = - a'_{0}$.  We have 
$$
  \left( \begin{array}{cc}
  \lambda \alpha & \lambda \beta \\
   \mu \gamma  & \mu \delta
 \end{array} \right) 
  \left( \begin{array}{cc}
 m  & l \\
  p & q
 \end{array} \right) = \left( \begin{array}{cc}
1 & 0\\
0 & 1
\end{array}\right).
 $$
Thus,
\begin{equation}\label{matrr2}
\left( \begin{array}{cc}
 m  & l \\
  p & q
 \end{array} \right) = \left( \begin{array}{cc}
  \lambda \alpha & \lambda \beta \\
   \mu \gamma  & \mu \delta
 \end{array} \right)^{-1}
=
\frac{1}{\lambda \mu (\alpha \delta - \beta \gamma)} \left( \begin{array}{cc}
 \mu \delta  & -\lambda \beta \\
   -\mu \gamma  & \lambda \alpha
 \end{array} \right)  
\end{equation} 
 whereby,
$$
\frac{p}{q} = \frac{-\mu \gamma}{\lambda  \alpha}
$$
and we get
 $$
 q = -\frac{\lambda p \alpha}{\mu \gamma}.
 $$
Since $\Phi (\xi , \eta) = a'_{0}\xi^{4} + a'_{4}\eta^{4}$, we have
$$
a'_{0} = \Phi(1 , 0) \qquad \textrm{and} \qquad a'_{4} = \Phi(0 , 1).
$$
When $\eta = \mu (\gamma x + \delta y ) = 0$, we have 
$$m = \frac{-\delta p}{\gamma}
$$
and when $\xi =\lambda (\alpha x + \beta y) = 0 $, we have
$$
l = \frac{-\beta q}{\alpha}.
$$
So we can write
$$
a'_{0} = F(m , p)= F\left(\frac{-\delta p}{\gamma} , p\right) = \frac{p^{4}}{\gamma^{4}}F(-\delta , \gamma)
$$  
and
$$
a'_{4} =F (l , q)= F\left(\frac{-\beta q}{\alpha} , q\right) = \frac{q^{4}}{\alpha^{4}}F(-\beta , \alpha).
$$
Therefore, if we choose  $\lambda$ and $\mu$ so that $\mu ^{8} = \frac{\mu^{4}}{\lambda^{4}} = \frac{F(-\beta , \alpha)}{F(\delta , -\gamma)}$, then
$-a'_{0} = a'_{4}$.

We have shown that $F(x , y)$ can be written as $a'_{0}\left(\xi^{4}(x , y) - \eta^{4}(x , y)\right)$, where 
\begin{equation}\label{lin2}
 \xi = \lambda (\alpha x + \beta y), \, \,   \eta =\mu (\gamma x + \delta y)
 \end{equation}
  and $\lambda \mu =1$. It remains to calculate the value of $a'_{0}$.
Using (\ref{H'2}) and (\ref{Ai2}), we get
$$A'_{2} = \Delta^{2} \frac{1}{4A_{3}^{2} A_{4}}= 6(3a'_{1}a'_{3} + 24 a'_{0}a'_{4} -2a'_{2}) = 144a'_{0}a'_{4}.
$$
Substituting  $a'_{4}$ by $ -a'_{0}$, we obtain
$$
a^{'2}_{0} = -\frac{\Delta^{2}}{24^{2}A_{3}^{2}A_{4}},
$$
where $\Delta = mq - lp$ is the determinant of the matrix 
$\left( \begin{array}{cc}
 m  & l \\
  p & q
 \end{array} \right)$. Therefore, from (\ref{matrr2}) and he fact that $\lambda \mu = 1$,
$$
 a^{'2}_{0} = -\frac{1}{(\alpha \delta -  \beta\gamma)^{2} 24^{2}A_{3}^{2}A_{4}}.
$$
To calculate $(\alpha \delta -  \beta \gamma)^{2}$, we recall that 
$$
2A_{1}A_{4}x^{2} + A_{3}^{2}xy + 2A_{3}A_{4}y^{2} = (\alpha x + \beta y) (\gamma x + \delta y) ,
$$
consequently, computing the discriminant of the above quadratic form and by (\ref{c22}),
\begin{equation}\label{IA2}
\left|\alpha \delta -  \beta \gamma\right|^{2} = \left| A_{3}^{4} - 16 A_{1}A_{4}^{2}A_{3}\right| =  \left|48A_{3}^{2}A_{4}I \right|
\end{equation}
and therefore,
\begin{equation*}
 a'_{0} = \pm \frac{1}{96 A_{3}^{2}A_{4}\sqrt{-3I}} ,
 \end{equation*}
where $I = I_{F}$.

We will assume, without loss of generality, that
\begin{equation}\label{a'2}
 a'_{0} =  \frac{1}{96 A_{3}^{2}A_{4}\sqrt{-3I}}.
 \end{equation}
\end{proof}

 %-------------------------------------------------------
\section{Resolvent Forms}\label{resolvent2}
 %-------------------------------------------------------

Suppose that $\xi$ and $\eta$ are linear forms in Lemma \ref{rF2}. Let us define
  $$\xi' = \frac{\xi}{(12A_{3}^{2})^{1/4}|A_{4}|^{1/8} } $$ 
 and   
  $$\eta' = \frac{\eta}{(12A_{3}^{2})^{1/4} |A_{4}|^{1/8}} ,$$
so that
$$
F(x , y) = \frac{1}{8\sqrt{3IA_{4}}} \left(\xi^{'4}(x , y) - \eta^{'4}(x , y) \right).
$$
 Lemma \ref{rF2} can   be restated as follows: 
 \begin{lemma}\label{nr2}
Let $F$ be the binary form in Theorem $\ref{main2}$.  Then 
\begin{equation}\label{aa2}
F(x , y) = \frac{1}{8\sqrt{3IA_{4}}} \left(\xi^{4}(x , y) - \eta^{4}(x , y) \right).
\end{equation}
where $\xi$ and $\eta$ are complex conjugate linear forms in $x$ and $y$, with
     $$
    \xi^4, \eta^4  \in \mathbb{Q} \left(\sqrt{A_{0}I/3} \right).   
   $$
\end{lemma}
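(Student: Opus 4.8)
The plan is to pin down the forms $\xi^4$ and $\eta^4$ through their symmetric functions — the product $\xi^4\eta^4$, the difference $\xi^4-\eta^4$, and the sum $\xi^4+\eta^4$ — and to show that the product and the sum are rational forms while the difference is $\sqrt{A_0I/3}$ times a rational form. The complex-conjugacy of $\xi$ and $\eta$ is already recorded in Lemma \ref{rF2}; it reflects that the discriminant $A_3^4-16A_1A_4^2A_3 = 48A_3^2A_4I$ of the quadratic form $W$ is negative, since $A_4<0$ and $I>0$. Granting that, once the three symmetric functions are controlled, the coefficients of $\xi^4=\tfrac12\big((\xi^4+\eta^4)+(\xi^4-\eta^4)\big)$ and of $\eta^4$ will lie in $\mathbb{Q}+\mathbb{Q}\sqrt{A_0I/3}=\mathbb{Q}(\sqrt{A_0I/3})$.

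Two of the three pieces are immediate. For the difference, equation (\ref{aa2}) gives $\xi^4-\eta^4=8\sqrt{3IA_4}\,F$; rewriting $3IA_4$ by means of the relation $A_0A_3^2=A_4A_1^2$ from (\ref{22}) as $3IA_4=9(A_3/A_1)^2(A_0I/3)$ shows that $\sqrt{3IA_4}$ is a rational multiple of $\sqrt{A_0I/3}$, so $\xi^4-\eta^4$ is $\sqrt{A_0I/3}$ times a rational multiple of $F$. For the product, recall from the proof of Lemma \ref{rF2} that before the real rescaling of Section \ref{resolvent2} one has $\xi\eta=W$; after that rescaling $\xi\eta=\frac{W}{(12A_3^2)^{1/2}|A_4|^{1/4}}$, whence $\xi^4\eta^4=\frac{W^4}{144A_3^4|A_4|}$ is a rational form, the fourth power having cancelled the fractional powers of $|A_4|$.

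The main obstacle is the sum $\xi^4+\eta^4$: its square $(\xi^4-\eta^4)^2+4\xi^4\eta^4$ is visibly a rational form, but this alone only shows $\xi^4+\eta^4$ to be a real algebraic multiple of a rational form, not that it is itself rational. To upgrade this I would use the Jacobian covariant $G=F_xH_y-F_yH_x$, a binary sextic with rational coefficients. In the diagonal coordinates $(\xi,\eta)$ — in which, by (\ref{aa2}) and the identity $H=\frac{1}{4A_3^2A_4}W^2$, the forms $F$ and $H$ are constant multiples of $\xi^4-\eta^4$ and of $\xi^2\eta^2$ respectively — a direct evaluation of the functional determinant, incorporating the Jacobian $\kappa$ of the substitution $(x,y)\mapsto(\xi,\eta)$, yields $G=(\text{const})\,\xi\eta\,(\xi^4+\eta^4)$. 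Since $\xi\eta$ is a scalar multiple of $W$, the rational form $W$ divides $G$, and the quotient exhibits $\xi^4+\eta^4$ as a fixed scalar times the rational form $G/W$. Tracking that scalar through the normalizations — the value $a_0'=(96A_3^2A_4\sqrt{-3I})^{-1}$ of Lemma \ref{rF2}, the Jacobian $\kappa=\sqrt{48A_3^2A_4I}$, and the rescaling factor $(12A_3^2)^{1/2}|A_4|^{1/4}$ of Section \ref{resolvent2} — one finds that the fractional powers of $|A_4|$ and the imaginary radicals all cancel, leaving the rational constant $|A_3A_4|$. Hence $\xi^4+\eta^4\in\mathbb{Q}[x,y]$.

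Combining the three steps, $\xi^4=\tfrac12(\xi^4+\eta^4)+\tfrac12(\xi^4-\eta^4)$ has coefficients in $\mathbb{Q}(\sqrt{A_0I/3})$, and likewise $\eta^4$, which is the assertion. The delicate point I expect to require the most care is precisely this cancellation in the scalar attached to the sum: it is exactly the reason the rescaling by $|A_4|^{1/8}$ was introduced in Section \ref{resolvent2}, and without it $\xi^4+\eta^4$ would be rational only up to a stray factor of $|A_4|^{1/2}$.
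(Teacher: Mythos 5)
Your proposal is correct, and its overall skeleton matches the paper's: both recover $\xi^4$ and $\eta^4$ from the sum and the difference, the difference $\xi^4-\eta^4 = 8\sqrt{3IA_4}\,F$ being $\sqrt{A_0I/3}$ times a rational multiple of $F$ because $3IA_4 = 9(A_3/A_1)^2(A_0I/3)$ by (\ref{22}), and both control the sum through the sextic covariant $Q=F_xH_y-F_yH_x$ together with the factorizations $H=\frac{1}{4A_3^2A_4}W^2$ and $W\propto\xi\eta$. Where you genuinely differ is in how the key identity for the sum is derived. The paper imports the syzygy $16H^3+9Q^2=4^4\cdot 3^3\,IHF^2$ from \cite{Cre2}, divides by $H$, substitutes the diagonal expressions, and obtains $(\xi^4+\eta^4)^2=-36A_3^2A_4\,\psi^2$, where $\psi$ is the explicit rational cubic in $Q=\frac{1}{2A_3^2A_4}W\psi$; it then extracts a square root, using $A_4<0$ and (\ref{22}) to convert $\sqrt{-A_4}$ into $|A_3/A_1|\sqrt{-A_0}$. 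You instead evaluate $Q$ directly by the chain rule in the coordinates $(\xi,\eta)$, where $F\propto\xi^4-\eta^4$ and $H\propto\xi^2\eta^2$, getting $Q=(\mathrm{const})\,\xi\eta\,(\xi^4+\eta^4)$, and divide by $W$. Your route is self-contained (no appeal to the syzygy) and avoids the sign ambiguity of a square root, at the price of the scalar bookkeeping you flag as delicate; that bookkeeping does close as you claim: the product of the substitution Jacobian with the coefficient of $\xi^4-\eta^4$ involves $\sqrt{3IA_0}/\sqrt{3IA_4}=|A_1/A_3|$, which is real and rational by (\ref{22}), and one lands on $\xi^4+\eta^4=\pm\psi/(2|A_3|)$ for the resolvent forms --- exactly what the paper obtains after its rescaling (for the unscaled forms of Lemma \ref{rF2} it finds $\pm\frac{6A_3^2}{A_1}\sqrt{-A_0}\,\psi$, which confirms your closing observation that the $|A_4|^{1/8}$ normalization of Section \ref{resolvent2} is precisely what makes the sum rational). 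One small economy: the rationality of the product $\xi^4\eta^4$, though correctly argued, is never needed --- the sum and the difference alone already give $\xi^4,\eta^4\in\mathbb{Q}\left(\sqrt{A_0I/3}\right)$.
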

\begin{proof}
For the binary form $F(x , y)$ with Hessian $H(x , y)$, the sextic covariant  $Q(x , y)$ is defined by
$$
Q(x , y) = \frac{\delta F}{\delta x} . \frac{\delta H}{\delta y} - \frac{\delta F}{\delta y}. \frac{\delta H}{\delta x}.
$$
Since we have taken $H(x , y) =\frac{1}{4A_{3}^{2}A_{4}}(2A_{1}A_{4}x^{2} + A_{3}^{2}xy + 2A_{4}A_{3}y^{2})^{2} $, we may write
$$
Q(x , y) = \frac{1}{2A_{3}^{2}A_{4}} W(x , y) \psi(x ,y) ,
$$
where 
$$
 W(x , y) =  2A_{1}A_{4}x^{2} + A_{3}^{2}xy + 2A_{4}A_{3}y^{2}
 $$
and
  $$
  \psi(x , y) = (A_{3}^{2}x + 4A_{3} A_{4}y) \frac{\delta F} {\delta x} - (4A_{1}A_{4}x + A_{3}^{2}y) \frac{\delta F}{\delta y}. 
  $$
We have (see equation (25) of \cite{Cre2})
 $$
 16 H^{3} + 9 Q^{2} = 4^{4} \times 3^{3}   I H F^{2} .
 $$
 We remark that in \cite{Cre2}, $g_{4} = \frac{-1}{3}H$, $g_{6} =\frac{-1}{36} Q$ and the invariants $I$ and $J$ are the negative of our $I$ and $J$, respectively.
 Since $H(x , y) = \frac{1}{4A_{3}^{2}A_{4}}W(x , y)^2$ is not identically zero, we can divide both sides of the obove identity by $H(x , y)$ to get
  \begin{equation}\label{42}
 W^{4}(x , y) + 9 A_{3}^{2}A_{4} \psi ^{2}(x , y) = 4^{4} \times 3^{3}  I A_{3}^{4}A_{4}^{2} F^{2}(x , y) .
  \end{equation}
  Since $W(x , y) = \xi  \eta $ and  $F(x , y) = \frac{ \xi^{4} - \eta^{4}}{96 A_{3}^2 A_{4}\sqrt{-3I}} $,  (\ref{42}) implies that
$$
\xi^4  \eta^4 + 9 A_{3}^{2}A_{4} \psi ^{2}(x , y) = \frac{1}{4} \left(-\xi^{8} - \eta^{8} + 2 \xi^4  \eta^4 \right)
$$
and we obtain
  \begin{equation}\label{Q2}
 (\xi^{4} + \eta^{4})^{2} = -36 A_{3}^{2}A_{4} \psi^{2}(x , y)
 \end{equation}
 and therefore, by (\ref{22})
 $$
 \xi^{4} + \eta^{4}=\pm \frac{ 6 A_{3}^{2}}{A_{1}}\sqrt{-A_{0}}\psi(x , y) .
 $$  
  Note that if all roots of $F(x , 1)$ are real then $I > 0$ and $A_{0} <0$ ( see \cite{Cre2}, Proposition 7). So we may write  
  $$
 \xi^{4} + \eta^{4} = b\sqrt{-A_{0}},
 $$
 with $b \in \mathbb{Q}$.  
 We have also seen that
 $$
 \xi^{4} - \eta^{4} = ia\sqrt{3I} 
 $$
  for some even integer $a$. Therefore, for integers $x$, $y$, the quantities $\xi^{4}(x , y)$ and $\eta^{4}(x ,y)$ are complex conjugates and belong to  $\mathbb{Q}\left(\sqrt{-A_{0}} , \sqrt{-3I}\right)$. Moreover, $\sqrt{-A_{0}}\,\xi^{4}(x , y)$ and $\sqrt{-A_{0}}\,\eta^{4}(x ,y)$ are   algebraic integers in $\mathbb{Q}\left(\sqrt{A_{0}I/3} \right)$. This is because  
$$
 \sqrt{-A_{0}} \left(\xi^{4} + \eta^{4}\right) =\pm \frac{ -6 A_{0} A_{3}^{2}}{A_{1}}\psi(x , y) .
 $$
and by (\ref{22}),  $A_{1} | A_{0}A_{3}^2$.  We will work in the number field $\mathbb{Q}\left(\sqrt{A_{0}I/3} \right)$.
    We also have
 \begin{eqnarray*}
 \frac{\xi^{4}}{\eta^{4}} & = & \frac{b\sqrt{-A_{0}} + ia\sqrt{3I}} {b\sqrt{-A_{0}} - ia\sqrt{3I}} \\ \nonumber
 & = & \frac{-A_{0}b^{2} -3a^{2} I + i6ab\sqrt{-A_{0}I/3}}{-A_{0}b^{2} + 3a^{2}I} 
 \end{eqnarray*}
 Therefore,
 $$
  \frac{\xi^{4}}{\eta^{4}} \in \mathbb{Q}(\sqrt{A_{0}I/3}) .
 $$
 Note that, in (\ref{lin2}), we started with two linear forms and continued with their fourth powers. Let  the linear form  $\xi = \xi(x , y)$ be a fourth root of $\xi^{4}(x , y)$ and define    
 $$
 \eta(x , y)= \bar{\xi}(x , y).
 $$
 Indeed, $\eta(x , y)$ is a fourth root of $\eta^4$.
  Hence, when $F(x , 1)$ splits in $\mathbb{R}$, we can define the complex conjugate linear forms $\xi(x , y)$ and $\eta(x , y)$, so that 
 $$
 \xi^{4}  - \eta^{4} = 96A_{3}^{2} A_{4}\sqrt{-3I}  F(x , y)  
 $$
 and 
 $$
\left| \xi \eta\right| = \left| 2A_{1}A_{4}x^{2} + A_{3}^{2}xy + 2A_{4}A_{3}y^{2}\right| .
 $$            
Now let us define
  $$\xi' = \frac{\xi}{(12A_{3}^{2})^{1/4}|A_{4}|^{1/8} } $$ 
 and   
  $$\eta' = \frac{\eta}{(12A_{3}^{2})^{1/4} |A_{4}|^{1/8}} ,$$
so that
$$
F(x , y) = \frac{1}{8\sqrt{3IA_{4}}} \left(\xi^{'4}(x , y) - \eta^{'4}(x , y) \right).
$$
 From (\ref{Ai2}), for every pair of integers $(x , y)$, we have
$$
3 \mid  \frac{1}{4A_{3}^{2}A_{4}} W^{2}(x , y) = H(x , y).
$$
 This gives
  $$
  12 A_{3}^{2} A_{4} \mid  W^{2}(x , y).
  $$
   By (\ref{42}),  for every pair of integers $(x , y)$, we have
  $$
 16 A_{3}^{2} A_{4} \mid \psi^{2} (x , y) . 
 $$
  Using (\ref{Q2}), we conclude  that  the real part of $\xi^{4}$ has the factor
 $12A_{3}^{2} A_{4}$ .  Since $\xi^4 - \eta^4 = a'_{0}F$, by (\ref{a'2}), the imaginary part of $\xi^{4}$ has also the factor $12A_{3}^{2} A_{4}$.
  So 
  $$
   \frac{\xi^4}{|12A_{3}^{2}A_{4}|} ,    \frac{\eta^4}{|12A_{3}^{2}A_{4}|}  \in \mathbb{Q} \left(\sqrt{-A_{0}} , \sqrt{-3I}\right).
   $$   
   By (\ref{22}), 
     $$
   \frac{\sqrt{-A_{4}}\, \xi^4}{|12A_{3}^{2}A_{4}|} ,    \frac{\sqrt{-A_{4}}\eta^4}{|12A_{3}^{2}A_{4}|}  \in \mathbb{Q} \left(\sqrt{A_{0}I/3} \right).   
   $$
\end{proof}
 We call a pair of complex conjugates $\xi$ and $\eta$ satisfying the  identities in Lemma \ref{nr2} a pair of \emph{resolvent forms}, and note that if $(\xi , \eta)$ is one pair, there are precisely three others, given by $(i \xi , -i \eta)$ , 
$(-\xi , -\eta)$ and $(-i\xi , i\eta)$, where $i = \sqrt{-1}$. 
We will, however, work with $(\xi , \eta)$, a fixed pair of resolvent forms. For the pair of resolvent form   $(\xi , \eta)$, we have
\begin{equation}\label{c62}
|\xi \eta| =  \left|\frac{2A_{1}A_{4}x^{2} + A_{3}^{2}xy + 2A_{4}A_{3}y^{2}}{\sqrt{12 A_{3}^2 \sqrt{\left|A_{4}\right|}}}\right| = \frac{\left(H(x , y)^2|A_{4}|\right)^{1/4}}{\sqrt{3}} .
\end{equation}
%\begin{equation}\label{c62}
%|\xi \eta| =  \left|\frac{2A_{1}A_{4}x^{2} + A_{3}^{2}xy + 2A_{4}A_{3}y^{2}}{\sqrt{12 A_{3}^2 \sqrt{\left|A_{4}\right|}}}\right| = \frac{\left(H(x , y)^2|A_{4}|\right)^{1/4}}{\sqrt{3}} .
%\end{equation}
\bigskip

{\bf REMARK}. The fact that for integers $x$ and $y$, $\xi^{4}(x , y)$ and $\eta^4(x , y)$ are complex conjugates and belong to an imaginary quadratic field is very crucial for our proof. To satisfy these conditions, when $J_{F} =0$, we only need $I_{F}A_{0} <0$ (see the proof of Lemma \ref{nr2}). Proposition $7$ of 
\cite{Cre2}, guarantees this property for  quartic binary forms that split in $\mathbb{R}$. So we may generalize Theorem \ref{main2} to all quartic binary forms with $I_{F}A_{0} <0$.

%--------------------------------------------------------------------------------                         
 \section{Gap Principles} \label{GPe2}
%--------------------------------------------------------------------------------

Let $\omega$ be a fourth root of unity (for some $j \in \{ 1 , 2,  3, 4\}$, let  $\omega= e^{\frac{2j\pi i}{4}}$). We say that the integer pair $(x , y)$ is \emph{related} to $\omega$ if 
$$
\left|\omega - \frac{\eta(x , y)}{\xi(x , y)}\right| = \min_{0 \leq k \leq 3}\left|e^{2k\pi i/4} - \frac{\eta(x , y)}{\xi(x , y)}\right|. 
$$
 Let us define  $z = 1 - \left(\frac{\eta(x , y)}{\xi(x , y)}\right)^{4}$, where $(\xi , \eta)$ is a fixed pair of resolvent forms ( in other words, $\frac{\eta}{\xi}$ is a fourth root of  $(1-z)$).
We have
$$
 |1 - z| = 1  \    \   ,  \    \     |z| < 2 .
 $$
Note that $|z| = 2$ is impossible here. Because it would mean $\eta^4 = -\xi^4$, so $F(x , y) = \frac{1}{4 \sqrt{3IA_{4}}} \xi^4$ and hence $D_{F} = 0$.
%from (\ref{Q2}), $\psi(x , y) = 0$ and hence the sextic covariant of $F(x , y)$ is zero. 
 \begin{lemma}\label{6.12}
 Let  $\omega$ be a fourth root of unity and the integral pair $(x , y)$ satisfies $F(x , y) =  \frac{1}{8\sqrt{3IA_{4}}}(\xi^{4}(x , y) - \eta^{4}(x , y)) =1$, with
 $$
\left|\omega - \frac{\eta(x , y)}{\xi(x , y)}\right| = \min_{0 \leq k \leq 3}\left|e^{2k\pi i/4} - \frac{\eta(x , y)}{\xi(x , y)}\right|. 
$$
 If $|z| \geq 1$ then 
 \begin{equation}\label{Gap12}
\left|\omega  - \frac{\eta(x,y)}{\xi(x,y)}\right| \leq \frac{\pi}{8} |z|.
\end{equation}
If $|z| < 1$ then 
\begin{equation}\label{Gap22}
\left|\omega - \frac{\eta(x,y)}{\xi(x,y)}\right| < \frac{\pi}{12} |z|.
\end{equation}
  \end{lemma}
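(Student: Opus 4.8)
The plan is to reduce the whole statement to two elementary trigonometric inequalities by parametrising the unit-modulus quantity $w = \eta(x,y)/\xi(x,y)$. Since $\xi$ and $\eta$ are complex conjugates and $F(x,y)=1$ forces $\xi \neq 0$, we have $|w|=1$; equivalently this is the already-recorded fact $|1-z|=1$. Write $w = e^{i\theta}$ and let $\omega = e^{i\phi}$ be the nearest fourth root of unity, so that $\phi$ is the multiple of $\pi/2$ closest to $\theta$ and the angular deviation $\psi = \theta - \phi$ satisfies $|\psi| \leq \pi/4$. Put $t = |\psi| \in [0, \pi/4]$. The first computation I would carry out is $\left|\omega - w\right| = \left|1 - e^{i\psi}\right| = 2\sin(t/2)$, and, because $z = 1 - w^4 = 1 - e^{4i\theta}$ with $2\phi$ an integer multiple of $\pi$, $|z| = 2\left|\sin(2\theta)\right| = 2\left|\sin(2\psi)\right| = 2\sin(2t)$. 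Thus everything is expressed through the single variable $t$.

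Next I would record the threshold that separates the two regimes: since $\sin$ is increasing on $[0, \pi/2]$ and $2t \in [0, \pi/2]$, the condition $|z| \geq 1$ is equivalent to $\sin(2t) \geq 1/2$, i.e. to $2t \geq \pi/6$, that is $t \geq \pi/12$; likewise $|z| < 1$ is equivalent to $t < \pi/12$. The point of isolating $\pi/12$ is that it is exactly the place where the interval on which a refined Jordan-type inequality is available begins to matter.

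For the regime $|z| \geq 1$ I would use the standard inequality $\sin u \geq \frac{2}{\pi}u$ on $[0,\pi/2]$ (the chord below the concave graph of $\sin$) together with $\sin(t/2) \leq t/2$. Applying these with $u = 2t$ gives $2\sin(t/2) \leq t \leq \frac{\pi}{4}\sin(2t) = \frac{\pi}{8}|z|$, which is (\ref{Gap12}); I note this bound in fact holds on all of $[0,\pi/4]$. For the regime $|z| < 1$, i.e. $2t \in [0, \pi/6)$, I would instead use the sharper estimate $\sin u \geq \frac{3}{\pi}u$ valid on $[0, \pi/6]$, again from concavity of $\sin$ on $[0,\pi]$, the line on the right being the chord joining $(0,0)$ to $(\pi/6, \tfrac12)$. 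With $u = 2t$ this yields $\sin(2t) \geq \frac{6t}{\pi}$, and combined with the strict inequality $\sin(t/2) < t/2$ (legitimate since $z \neq 0$ forces $t>0$) we get $2\sin(t/2) < t \leq \frac{\pi}{6}\sin(2t) = \frac{\pi}{12}|z|$, which is (\ref{Gap22}).

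The calculations are all elementary; the only place requiring care, the main obstacle, is matching the two sharp constants $\pi/8$ and $\pi/12$ to the correct regimes. This matching is not accidental: the regime boundary $|z|=1$ corresponds precisely to $2t = \pi/6$, which is exactly the right endpoint of the interval on which the refined inequality $\sin u \geq \frac{3}{\pi}u$ holds, so the stronger constant $\pi/12$ is available exactly where $|z| < 1$. One should also verify at the outset that $\omega$ being the \emph{nearest} fourth root of unity really does force $|\psi| \leq \pi/4$ (the four roots are equally spaced at angular gaps $\pi/2$), since this is what keeps $2t$ inside $[0, \pi/2]$ and legitimises both Jordan-type inequalities.
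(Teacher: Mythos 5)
Your proposal is correct and is essentially the paper's own argument: the paper likewise parametrises by the angle (setting $4\theta = \arg(\eta^4/\xi^4)$, so that $|z| = \sqrt{2-2\cos 4\theta} = 2|\sin 2\theta|$), bounds the chord $\left|\omega - \frac{\eta}{\xi}\right|$ by the arc $|\theta|$, and then invokes, as a ``differential calculus'' fact, precisely your two Jordan-type inequalities, namely $\frac{|2\theta|}{|\sin 2\theta|} < \frac{\pi}{2}$ for $0<|\theta|<\frac{\pi}{4}$ and $\frac{|2\theta|}{|\sin 2\theta|} < \frac{\pi}{3}$ for $0<|\theta|<\frac{\pi}{12}$. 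Your chord-below-concave-graph justification of those inequalities, and your explicit check that nearness to a fourth root of unity forces the deviation angle into $[-\pi/4,\pi/4]$, merely make explicit what the paper leaves implicit.
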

\begin{proof}
%If $$
%1\leq |z|  = \prod_{0\leq k \leq 3} \left|e^{2k\pi i/4} - \frac{\eta(x , y)}{\xi(x , y)}\right|,
%$$
 %then
%$$
%\left|\omega - \frac{\eta(x , y)}{\xi(x , y)}\right| = \min_{0 \leq k \leq 3}\left|e^{2k\pi i/4} - \frac{\eta(x , y)}{\xi(x , y)}\right| \leq |z|.
%$$
%Now suppose that $|z| < 1$.
 Put
 $$
 4\theta = \textrm{arg} \left( \frac{\eta(x , y)^{4}}{\xi(x , y)^{4}}\right).
 $$ 
  We have
  $$
  \sqrt{2 - 2\cos(4\theta)} = |z| .
  $$
  Therefore, when $|z| < 2$ we have 
$$|\theta| < \frac{\pi}{4} $$
and  when $|z| < 1$ we have
$$|\theta| < \frac{\pi}{12}.$$
 Since 
 $$ 
\left|\omega - \frac{\eta(x,y)}{\xi(x,y)} \right| \leq  |\theta|, 
 $$
we obtain
$$
\left|\omega - \frac{\eta(x,y)}{\xi(x,y)}\right| \leq \frac{1}{4} \frac{|4\theta|}{\sqrt{2 - 2\cos(4\theta)}} 
\left|1 - \frac{\eta(x,y)^{4}}{\xi(x,y)^{4}}\right|. 
$$
By differential calculus $\frac{|4\theta|}{\sqrt{2 - 2\cos(4\theta)}} < \frac{\pi}{2}$ whenever $ 0<|\theta| < \frac{\pi}{4} $. Therefore
 $$
 \left|\omega - \frac{\eta(x , y)}{\xi(x , y)}\right| <\frac{\pi}{8} |z| ,
 $$
 and from the fact that $\frac{|4\theta|}{\sqrt{2 - 2\cos(4\theta)}} < \frac{\pi}{3}$ whenever $ 0<|\theta| < \frac{\pi}{12} $ , we conclude
 $$
 \left|\omega - \frac{\eta(x , y)}{\xi(x , y)}\right| <\frac{\pi}{12} |z| ,
 $$
as desired. 
\end{proof}

Suppose that we have distinct solutions to $\left| F(x , y)\right| \leq  h$ indexed by $i$, say $(x_{i}, y_{i})$, related to a fixed fourth root of unity $\omega$ with $|\xi(x_{i+1} , y_{i+1})| \geq |\xi(x_{i} , y_{i})|$. Let
$$
F(x_{i} , y_{i}) = h_{i}, \, \,  F(x_{i+1} , y_{i+1}) = h_{i+1}.
$$
For brevity, we will write $\eta_{i} = \eta(x_{i} , y_{i})$ and $\xi_{i} = \xi(x_{i} , y_{i})$. 
We have
\begin{displaymath}
\left(\begin{array}{cc}
\lambda \alpha & \lambda \beta \\
\mu \gamma & \mu \delta 
\end{array}\right)
\left(\begin{array}{cc}
x_{1} & x_{2} \\
y_{1} & y_{2}
\end{array} \right)
=
\sqrt{12 A_{3}^2 \sqrt{\left|A_{4}\right|}} \left(\begin{array}{cc}
\xi_{1} & \xi_{2} \\
\eta_{1} & \eta_{2}
\end{array}\right)
\end{displaymath}
(see the definition of linear forms $\xi$ and $\eta$ in Section \ref{resolvent2}).
 Since $(x_{1}, y_{1})$ and $(x_{2}, y_{2})$ are distinct co-prime solutions,    $x_{1}y _{2}- x_{2}y_{1}$ is a nonzero integer. So by  (\ref{IA2}) and (\ref{22}), we get 
 \begin{equation}\label{lb2}
 |\xi_{1} \eta_{2} - \xi_{2} \eta_{1} | = \frac{|(\alpha \delta - \beta \gamma)(x_{1}y _{2}- x_{2}y_{1}) |}{\sqrt{12 A_{3}^2 \sqrt{\left|A_{4}\right|}} }  \geq 2\sqrt{I} \left| A_{4} \right|^{1/4}.
 \end{equation}    
On the other hand, by (\ref{Gap12}) and  (\ref{Gap22}),  we have
 \begin{eqnarray*}
|\xi_{i} \eta_{i+1} - \xi_{i+1} \eta_{i}| & = &\left|\xi_{i}( \eta_{i+1} - \omega \xi_{i+1}) - \xi_{i+1}( \eta_{i} - \omega \xi_{i})\right| \\ \nonumber 
& \leq & \left| \xi_{i}\xi_{i+1} \left(\frac{\eta_{i+1}}{\xi_{i+1}} -\omega \right)\right| + \left| \xi_{i}\xi_{i+1} \left(\frac{\eta_{i}}{\xi_{i}} -\omega \right)\right|\qquad (\textrm{by the triangle inequality})\\ \nonumber
&\leq & \frac{\pi}{8}\left(| \xi_{i}\xi_{i+1}z_{i+1}| + | \xi_{i}\xi_{i+1}z_{i}|\right) \qquad  (\textrm{from} (\ref{Gap12}))\\ \nonumber
& = & \frac{\pi}{8}\left(| \xi_{i}\xi_{i+1} \frac{\eta_{i+1}^4 - \xi_{i+1}^4}{\xi_{i+1}^4}| + | \xi_{i}\xi_{i+1}\frac{\eta_{i}^4 - \xi_{i}^4}{\xi_{i}^4}|\right) \\ \nonumber
&\leq& \pi h\sqrt{\left|3I\, A_{4}\right|}\left(\frac{|\xi_{i}|}{|\xi_{i+1}^{3}|} + \frac{|\xi_{i+1}|}{|\xi_{i}^{3}|}\right),
\end{eqnarray*}
the last inequality holding from expression for $F(x , y)$ in Lemma \ref{nr2} and since $\left|F(x , y) \right| < h$.
Since we assumed $|\xi_{i}| \leq |\xi_{i+1}|$, we get
$$
|\xi_{i} \eta_{i+1} - \xi_{i+1} \eta_{i}| \leq 2 \pi h\sqrt{\left|3I\, A_{4}\right|} \left( \frac{|\xi_{i+1}|}{|\xi_{i}^{3}|}\right)
$$
Combining this with (\ref{lb2}), we conclude 
\begin{equation}\label{Gap2}
|\xi_{i+1}| \geq \frac{1}{\pi \sqrt{3}h\, \left|A_{4}\right|^{1/4}}|\xi_{i}|^{3}.
\end{equation}

Let us now assume that there are  $4$ distinct solutions to $\left|F(x , y) \right| \leq h$ related to a fixed choice of $\omega$, corresponding to $\xi_{-1}$, $\xi_{0}$, $\xi_{1}$ and $\xi_{2}$, where $|\xi_{-1}| \leq |\xi_{0}| \leq  |\xi_{1}| \leq |\xi_{2}| $and $F(x_{i} , y_{i}) = h_{i} $. We will deduce a contradiction, which shows that at most $3$ such solutions can exist.  
By (\ref{Gap2}) and since $\left|h_{i}\right| \leq h$,
$$
|z_{i+1}| \leq \frac{ 3 \pi^4 |z_{i}|^{3} h^2}{64 I} , 
$$
where $z_{i} = 1 - \frac{\eta_{i}^{4}}{\xi_{i}^{4}} = \frac{8h \sqrt{ \left|3I\, A_{4}\right|}}{\xi_{i}^{4}}$.  Since $|z_{-1}|\leq 2$, if $I > 36.6 h^2$ then $|z_{0}|$ , $|z_{1}|$ , $ |z_{2}| < 1$.   By (\ref{Gap12}),
\begin{eqnarray*}
|\xi_{-1} \eta_{0} - \xi_{0} \eta_{-1}|  &=& |\xi_{-1}(\omega \eta_{0} - \xi_{0}) - \xi_{0}( \omega \eta_{-1} - \xi_{-1})| \\
& \leq & 8 h (1 + \frac{\pi}{12})\sqrt{ \left|3I\, A_{4}\right|} \left( \frac{|\xi_{0}|}{|\xi_{-1}^{3}|}\right). 
\end{eqnarray*}
 Combining this with (\ref{lb2}), we conclude 
 $$
  |\xi_{0}| \geq \frac{2\sqrt{3}}{5 \pi h\, \left|A_{4}\right|^{1/4}}|\xi_{-1}|^{3} .$$
Similarly, we get 
\begin{eqnarray*}
& & |\xi_{0} \eta_{1} - \xi_{1} \eta_{0}|  = |\xi_{0}(\omega \eta_{1} - \xi_{1}) - \xi_{1}(\omega  \eta_{0} - \xi_{0})| \\
& \leq & 8 h \sqrt{\left|3I\, A_{4}\right|}\frac{\pi}{12} \left(\frac{|\xi_{0}|}{|\xi_{1}^{3}|} +  \frac{|\xi_{1}|}{|\xi_{0}^{3}|}\right) \leq\frac{4 \pi}{3}h\sqrt{\left|3I\, A_{4}\right|} \left( \frac{|\xi_{1}|}{|\xi_{0}^{3}|}\right),
\end{eqnarray*}
which leads to
\begin{equation}\label{n2}
 |\xi_{1}| \geq \frac{3}{2 \pi h\, \left| A_{4}\right|^{1/4}}|\xi_{0}|^{3} \geq  \frac{72\sqrt{3}}{2 \pi h^4 (5 \pi)^3 \, \left| A_{4}\right|}|\xi_{-1}|^{9} .
\end{equation}
Note that  $\left|\frac{8 h \sqrt{ \left|3I\, A_{4}\right|}}{\xi^{4}_{-1}}\right| = |z_{-1}| = \left|1 - \left(\frac{\eta_{-1}}{\xi_{-1}}\right)^{4}\right| < 2$ and therefore,
\begin{equation}\label{nn2}
|\xi_{-1}|^{4} > 4h \sqrt{\left|3I\, A_{4}\right|} .
\end{equation}
Thus, when $I > 36.6 h^2$ we have
\begin{equation}\label{982}
|\xi_{1}| > I^{\frac{9}{8}} \frac{72 \sqrt{3}\left( 4\sqrt{3}\right)^{9/4} \left| A_{4}\right|^{1/8}}{2 \pi (5 \pi)^3 h^{7/4}} > 0.39 \,
\frac{I^{\frac{9}{8}} \,\left| A_{4}\right|^{1/8} }{h^{7/4}}.
\end{equation}

Recall that By Lemma \ref{red2}, we can assume that $\left| H(x , y)\right| \geq h^{3} 12\sqrt{3I}$ when looking for pairs of solutions $(x , y)$ with $ |y| \geq \frac{h^{3/4}}{ (3I)^{1/8}}$.
This implies
$$
\left|H(x_{-1} , y_{-1})\right| \geq 12 \frac{h^3\sqrt{3I}}{\left|A_{3}^{2} A_{4} \right|},
$$ 
So by (\ref{c62}),
$$
\left| \xi_{-1}\right|^4 = \frac{H \sqrt{|A_{4}|}}{3} \geq 4h^3\sqrt{\left|3I A_{4}\right|}.
$$
Moreover, one may assume that $h>2$, for the case $h=1$ is being addressed when we are treating the Thue equation.  Under these assumptions, we have 
$$|z_{-1}| = \left| \frac{8h \sqrt{ \left|3I\, A_{4}\right|}}{\xi_{i}^{4}} \right|< 1$$
 and by (\ref{n2}) and Lemma \ref{6.12},
\begin{equation}\label{98h2}
|\xi_{1}| > (4\sqrt{3})^{9/4} I^{\frac{9}{8}}h^{11/4} \, \left| A_{4}\right|^{1/8} \left(\frac{ 3}{2 \pi }\right)^4> 
4 h^{11/4} I^{\frac{9}{8}}\, \left| A_{4}\right|^{1/8}.
\end{equation}
Here the point is that the inequality $ |y| \geq \frac{h^{3/4}}{ (3I)^{1/8}}$ provides us with  a good enough lower bound (\ref{98h2}) for the size of $\xi_{1}$.
Hence, to prove Theorem \ref{main22}, we do not need the assumption $I > 36.6 h^2$. 

%--------------------------------------------- 
 \section{Some Algebraic Numbers} 
 %-----------------------------------------------
 
 Combining the polynomials $A_{r , g}$ and $B_{r , g}$ in Lemma \ref{hyp2} with the resolvent forms, we will consider the complex sequences $\Sigma_{r,g} $ given by
 $$
 \Sigma_{r,g} = \frac{\eta_{2}}{\xi_{2}}A_{r,g}(z_{1}) - \frac{\eta_{1}}{\xi_{1}} B_{r,g}(z_{1})
 $$
 where $z_{1} = 1 - \eta_{1}^{4}/ \xi_{1}^{4}$ .
For any pair of integers $(x , y)$, $ \xi^{4}(x , y)$ and $\eta^{4}(x , y)$  are algebraic integers in  $\mathbb{Q}(\sqrt{A_{0}I/3})$(see Lemma \ref{nr2}).  We have seen that $A_{0} < 0$ and one can assume $A_{3} A_{4} \neq 0$ (see Lemma \ref{A342}). Therefore from (\ref{22}), we have $A_{1} \neq 0$.
Define
$$
\Lambda_{r,g} = \left(9|A_{4}|\right)^{\frac{1-g}{4}}\xi_{1}^{4r + 1- g} \xi_{2} \Sigma_{r,g}.
$$
  We will show that  $\Lambda_{r,g}$ is either an integer in $\mathbb{Q}(\sqrt{\frac{A_{0}I}{3}})$ or a fourth root of such an integer.  If $\Lambda_{r,g} \neq 0$, this provides a lower bound upon 
 $|\Lambda_{r,g}|$.

  \begin{lemma}\label{122}
 For any pair of integer $(s , t)$, we have
   \begin{equation*}
 \frac{\xi( s , t)}{\xi(1 , 0)}  , \frac{\eta( s , t)}{\eta(1 , 0)} \in \mathbb{Q}(\sqrt{A_{0}I/3})[s , t].
 \end{equation*}
  \end{lemma}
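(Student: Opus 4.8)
The plan is to exploit the fact that $\xi$ and $\eta$ are \emph{linear} forms in $x$ and $y$, so that the stated ratios are automatically polynomial in $(s,t)$; the only genuine content is to locate the single nontrivial coefficient inside the field $\mathbb{Q}(\sqrt{A_{0}I/3})$.

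First I would recall from Section \ref{RDF2}, in particular (\ref{HaHa}) and (\ref{lin2}), that $\xi = \lambda(\alpha x + \beta y)$ and $\eta = \mu(\gamma x + \delta y)$ with $\lambda\mu = 1$, where $\alpha x + \beta y$ and $\gamma x + \delta y$ are the two complex-conjugate linear factors of the quadratic form $2A_{1}A_{4}x^{2} + A_{3}^{2}xy + 2A_{4}A_{3}y^{2}$. Since these forms are linear, the homogeneity gives
$$
\frac{\xi(s,t)}{\xi(1,0)} = \frac{\lambda\alpha s + \lambda\beta t}{\lambda\alpha} = s + \frac{\beta}{\alpha}\,t, \qquad \frac{\eta(s,t)}{\eta(1,0)} = s + \frac{\delta}{\gamma}\,t,
$$
and the scaling factors $\lambda,\mu$ cancel outright. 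Thus it suffices to prove that the two coefficients $\beta/\alpha$ and $\delta/\gamma$ lie in $\mathbb{Q}(\sqrt{A_{0}I/3})$.

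Next I would observe that $-\beta/\alpha$ and $-\delta/\gamma$ are precisely the two roots of $2A_{1}A_{4}X^{2} + A_{3}^{2}X + 2A_{4}A_{3} = 0$, so by the quadratic formula they equal $\bigl(-A_{3}^{2} \pm \sqrt{A_{3}^{4} - 16A_{1}A_{4}^{2}A_{3}}\bigr)/(4A_{1}A_{4})$, where $A_{1}\neq 0$ (as already noted) and $A_{4}\neq 0$. The heart of the argument is then to rewrite this discriminant root in terms of $\sqrt{A_{0}I/3}$. Using the relation $A_{0}A_{3}^{2} = A_{4}A_{1}^{2}$ from (\ref{22}) to substitute $A_{4} = A_{0}A_{3}^{2}/A_{1}^{2}$, together with the identity (\ref{c22}), one reduces the surviving irrationality $\sqrt{A_{4}I}$ to a rational multiple of $\sqrt{A_{0}I/3}$ and finds
$$
\sqrt{A_{3}^{4} - 16A_{1}A_{4}^{2}A_{3}} = \frac{12A_{3}^{2}}{|A_{1}|}\sqrt{A_{0}I/3},
$$
which indeed lies in $\mathbb{Q}(\sqrt{A_{0}I/3})$. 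Both roots therefore lie in this field, whence $\beta/\alpha,\ \delta/\gamma \in \mathbb{Q}(\sqrt{A_{0}I/3})$ and the two ratios are polynomials in $s,t$ with coefficients in $\mathbb{Q}(\sqrt{A_{0}I/3})$, as claimed.

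The step I expect to require the most care is matching signs in the final display. The identity (\ref{c22}) only controls $\left|A_{3}^{4} - 16A_{1}A_{4}^{2}A_{3}\right|$, so I must separately pin down the sign of the discriminant. Here I would use that $A_{0}<0$ forces $A_{4}<0$ via (\ref{22}), while $I>0$ for a form splitting in $\mathbb{R}$; this makes $48A_{3}^{2}A_{4}I<0$, confirming that the discriminant is negative—consistent with $\alpha x+\beta y$ and $\gamma x+\delta y$ being complex conjugates. Tracking these signs correctly is exactly what guarantees that the surviving irrationality is the imaginary quadratic generator $\sqrt{A_{0}I/3}$ and not some unrelated square root.
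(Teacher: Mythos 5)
Your proposal is correct and follows essentially the same route as the paper: both arguments reduce to showing that the square root of the discriminant of $2A_{1}A_{4}x^{2}+A_{3}^{2}xy+2A_{3}A_{4}y^{2}$ equals $\frac{12A_{3}^{2}}{A_{1}}\sqrt{A_{0}I/3}$ via (\ref{22}) and (\ref{c22}), the paper phrasing the endgame through $\alpha\gamma,\ \beta\delta,\ \alpha\delta+\beta\gamma\in\mathbb{Z}$ together with $\alpha\delta-\beta\gamma\in\mathbb{Q}(\sqrt{A_{0}I/3})$, while you phrase it through the quadratic formula for the roots $-\beta/\alpha,\ -\delta/\gamma$. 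Your extra care with the sign of the discriminant (using $A_{0}<0$, $A_{4}<0$, $I>0$) is a welcome point that the paper glosses over.
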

  \begin{proof}
By (\ref{22}) and (\ref{IA2}), we have
\begin{equation*}
\alpha\delta - \beta\gamma = \sqrt{A_{3}^{4} - 16 A_{1}A_{4}^{2}A_{3} } = \frac{4A_{3}^{2}}{A_{1}}\sqrt{3IA_{0}} = \frac{12A_{3}^{2}}{A_{1}}\sqrt{\frac{IA_{0}}{3}} .
\end{equation*}
Since
\begin{eqnarray*}
2A_{1}A_{4}x^{2} + A_{3}^{2}xy + 2A_{3}A_{4}y^{2} & = &(\alpha x + \beta y) (\gamma x + \delta y) \\
 & = & \sqrt{12 A_{3}^2 \sqrt{|A_{4}|}}\xi(x , y) \eta(x , y) ,
\end{eqnarray*}
we conclude that $\alpha \gamma$ , $\beta \delta$ , $\alpha\delta+ \beta\gamma \in \mathbb{Z}$. Thus, for integral pair $(s , t)$, we obtain
  \begin{equation*}
 \frac{\xi( s , t)}{\xi(1 , 0)}  , \frac{\eta( s , t)}{\eta(1 , 0)} \in \mathbb{Q}(\sqrt{A_{0}I/3})[s , t].
 \end{equation*}
 \end{proof}

 \begin{lemma}\label{ai2}
 If $(x_{1} , y_{1})$ and $(x_{2}, y_{2})$ are two pairs of rational integers then
 $$
 \sqrt{3|A_{4}|^{1/2}}\xi(x_{1} , y_{1}) \eta(x_{2} , y_{2}) , 
 $$
  $$
  \xi(x_{1} , y_{1})^{3} \xi(x_{2} , y_{2}) 
 $$
  and
  $$\eta(x_{1} , y_{1})^{3} \eta(x_{2} , y_{2})$$
   are integers in $\mathbb{Q}(\sqrt{A_{0}I/3})$.
  \end{lemma}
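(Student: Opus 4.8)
The plan is to push everything back to the explicit linear representation $\xi = \lambda(\alpha x + \beta y)/N$, $\eta = \mu(\gamma x + \delta y)/N$ from Lemmas \ref{rF2} and \ref{122}, where $N = (12A_3^2)^{1/4}|A_4|^{1/8}$, $\lambda\mu = 1$, and $(\alpha x + \beta y)(\gamma x + \delta y) = W(x,y)$. First I would record the arithmetic of the coefficients: reading $W$ off gives $\alpha\gamma = 2A_1A_4$, $\beta\delta = 2A_3A_4$, $\alpha\delta + \beta\gamma = A_3^2$, while $\alpha\delta\cdot\beta\gamma = (\alpha\gamma)(\beta\delta) = 4A_1A_3A_4^2$; hence $\alpha\delta,\beta\gamma$ are the roots of the monic integer polynomial $T^2 - A_3^2 T + 4A_1A_3A_4^2$, and since $\alpha\delta - \beta\gamma = (12A_3^2/A_1)\sqrt{IA_0/3} \in K$ by Lemma \ref{122}, both $\alpha\delta$ and $\beta\gamma$ are algebraic integers of $K := \mathbb{Q}(\sqrt{A_0 I/3})$, with $\beta/\alpha = \beta\gamma/(2A_1A_4) \in K$. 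I would also set up three divisibility facts used throughout: $4 \mid A_3$ (immediate from $A_3 = 12(6a_1a_4 - a_2a_3)$); $A_3 \mid A_1A_4$, because $A_0A_3^2 = A_1^2A_4$ gives $2v_p(A_3) \le 2v_p(A_1) + v_p(A_4)$, so $v_p(A_3) \le v_p(A_1) + v_p(A_4)$ for every prime $p$ since $v_p(A_4) \ge 0$; and $\sqrt{3IA_4} \in \mathcal{O}_K$, since its square $3IA_4$ is a rational integer and $\sqrt{3IA_4} = (3A_3/A_1)\sqrt{IA_0/3} \in K$.

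For the first quantity I would compute directly. Using $\lambda\mu = 1$ and $\sqrt{3|A_4|^{1/2}}/N^2 = 1/(2|A_3|)$, one finds
$$\sqrt{3|A_4|^{1/2}}\,\xi(x_1,y_1)\,\eta(x_2,y_2) = \frac{(\alpha x_1 + \beta y_1)(\gamma x_2 + \delta y_2)}{2|A_3|}.$$
Expanding the numerator, the coefficients of $x_1x_2$, $y_1y_2$, $x_1y_2$, $x_2y_1$ are $\alpha\gamma/(2|A_3|) = A_1A_4/|A_3|$, $\beta\delta/(2|A_3|) = \pm A_4$, and $\alpha\delta/(2|A_3|), \beta\gamma/(2|A_3|)$, which equal $|A_3|/4 \pm \sqrt{3IA_4}$. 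The first two are rational integers by $A_3 \mid A_1A_4$, and the last two lie in $\mathcal{O}_K$ by $4 \mid A_3$ together with $\sqrt{3IA_4} \in \mathcal{O}_K$. Since all four coefficients are integral, so is every value at an integral point.

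For the two cubic quantities the cleanest route is to prove first the auxiliary fact that $\xi(x,y)^4,\eta(x,y)^4 \in \mathcal{O}_K$ for all integers $x,y$. Writing $\xi^4 = \frac{1}{2}(\xi^4 + \eta^4) + \frac{1}{2}(\xi^4 - \eta^4)$, the second summand is $4\sqrt{3IA_4}\,F(x,y) \in \mathcal{O}_K$ by (\ref{aa2}), and the first is $\pm\psi(x,y)/(4|A_3|)$ by (\ref{Q2}); reducing $\psi$ modulo $4$ and using $4 \mid A_3$ and $A_3 \mid A_1A_4$ to check that each bracket defining $\psi$ is divisible by $4A_3$ shows this is a rational integer. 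Granting this, I would finish by integral closure: the fourth power of $\xi(x_1,y_1)^3\xi(x_2,y_2)$ is $(\xi(x_1,y_1)^4)^3(\xi(x_2,y_2)^4) \in \mathcal{O}_K$, while $\xi(x_1,y_1)^3\xi(x_2,y_2) = \xi(x_1,y_1)^4\cdot\big(\xi(x_2,y_2)/\xi(x_1,y_1)\big) \in K$ because $\beta/\alpha \in K$ (the value being $0$, hence trivially integral, when $(x_1,y_1)=(0,0)$). As $\mathcal{O}_K$ is integrally closed, the quantity lies in $\mathcal{O}_K$; the argument for $\eta(x_1,y_1)^3\eta(x_2,y_2)$ is identical with $\xi,\eta$ interchanged.

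The hard part will be the integrality, rather than mere $K$-membership: the normalization $N$ carries the irrational factors $|A_4|^{1/8}$ and $\lambda$, and one must verify that they cancel and that the surviving rational denominators, essentially powers of $|A_3|$ and a factor of $4$, are absorbed. This is precisely where the structural divisibilities $4 \mid A_3$, $A_3 \mid A_1A_4$, and $\sqrt{3IA_4} \in \mathcal{O}_K$ enter, all flowing from the syzygy $A_0A_3^2 = A_1^2A_4$ in (\ref{22}) (a consequence of $J_F = 0$) together with the explicit shape of the Hessian coefficients.
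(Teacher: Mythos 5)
Your proof is correct, but it runs along a genuinely different track than the paper's in both halves. For the first quantity the paper never expands the bilinear form: it factors $\sqrt{3|A_{4}|^{1/2}}\,\xi(x_{1},y_{1})\eta(x_{2},y_{2})$ as the product of $\xi(x_{1},y_{1})/\xi(x_{2},y_{2})\in\mathbb{Q}(\sqrt{A_{0}I/3})$ (Lemma \ref{122}) with the rational number $\sqrt{3|A_{4}|^{1/2}}\,\xi(x_{2},y_{2})\eta(x_{2},y_{2})=W(x_{2},y_{2})/(2|A_{3}|)$, integrality being treated as automatic since $\xi$, $\eta$ and $\sqrt{3|A_{4}|^{1/2}}$ are each algebraic integers (their fourth powers are, by Lemma \ref{nr2}); your computation of the four coefficients $A_{1}A_{4}/|A_{3}|$, $\pm A_{4}$, $|A_{3}|/4\pm\sqrt{3IA_{4}}$ proves the same statement while making explicit which divisibilities ($4\mid A_{3}$, $A_{3}\mid A_{1}A_{4}$, both flowing from (\ref{22})) absorb the denominators — a point the paper leaves implicit. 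For the two quartic monomials the paper writes $\xi=\epsilon_{1}x+\epsilon_{2}y$, observes that $\epsilon_{1},\epsilon_{2}$ and hence the monomials $\epsilon_{1}^{i}\epsilon_{2}^{4-i}$ are algebraic integers lying in $\mathbb{Q}(\sqrt{A_{0}I/3})$, and expands $\xi(x_{1},y_{1})^{3}\xi(x_{2},y_{2})$ as a $\mathbb{Z}$-linear combination of them; you instead show the quantity lies in the field via the ratio $\xi(x_{2},y_{2})/\xi(x_{1},y_{1})$ and that its fourth power $\left(\xi(x_{1},y_{1})^{4}\right)^{3}\xi(x_{2},y_{2})^{4}$ is an integer there, and then invoke integral closure. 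Your route buys a cleaner finish — no monomial bookkeeping, and no need to justify that the mixed monomials such as $\epsilon_{1}^{3}\epsilon_{2}$ lie in the quadratic field, which the paper glosses — at the cost of re-deriving $\xi^{4},\eta^{4}$ integral from the covariant identities, a fact the paper simply cites from Lemma \ref{nr2}. Two small points to tidy: equation (\ref{Q2}) is stated for the unnormalized forms of Lemma \ref{rF2}, so your identity $\tfrac{1}{2}(\xi^{4}+\eta^{4})=\pm\psi(x,y)/(4|A_{3}|)$ for the resolvent forms needs the (routine) rescaling by $12A_{3}^{2}\sqrt{|A_{4}|}$; and when you divide by $\xi(x_{1},y_{1})$ you should record that $\xi$ vanishes at no integer point other than $(0,0)$ (immediate since $\xi,\eta$ are complex conjugate and $F$ is irreducible), which is what your parenthetical implicitly assumes.
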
 
   \begin{proof}
   For any pair of integers $(x , y)$, Lemma \ref{122} implies that 
$$\frac{\xi(x , y)}{\xi(1 , 0)}  \in \mathbb{Q}(\sqrt{A_{0}I/3}).$$
Thus, $$\frac{\xi(x_{1} , y_{1})}{\xi(x_{2} , y_{2})} \in \mathbb{Q}(\sqrt{A_{0}I/3}).$$ 
Since $$\sqrt{3|A_{4}|^{1/2}}\, \xi(x_{2} , y_{2}) \eta(x_{2} , y_{2}) = \frac{\omega(x , y)}{2|A_{3}|} \in \mathbb{Q},$$ 
the algebraic integer $\sqrt{3|A_{4}|^{1/2}}\xi(x_{1} , y_{1}) \eta(x_{2} , y_{2})$ belongs to $\mathbb{Q}(\sqrt{A_{0}I/3})$.

\noindent  Let $\xi(x , y) = \epsilon_{1} x + \epsilon_{2} y$. Clearly, $\epsilon_{1}$ and $\epsilon_{2}$ are algebraic integers and so are $\epsilon_{1}^{4}$, $\epsilon_{1}^{3} \epsilon_{2}$,
  $\epsilon_{1}^{2} \epsilon_{2}^{2}$, 
$\epsilon_{1} \epsilon_{2}^{3}$ and $\epsilon_{2}^{4}$. Since $\xi^{4}$ is an integer in $\mathbb{Q}(\sqrt{A_{0}I/3})$, we conclude that   $\epsilon_{1}^{4}$, $\epsilon_{1}^{3} \epsilon_{2}$,
  $\epsilon_{1}^{2} \epsilon_{2}^{2}$, 
$\epsilon_{1} \epsilon_{2}^{3}$ and $\epsilon_{2}^{4}$ are all algebraic integers in   $\mathbb{Q}(\sqrt{A_{0}I/3})$. 

\noindent $\xi(x_{1} , y_{1})^{3} \xi(x_{2} , y_{2})$ is an integer in $\mathbb{Q}(\sqrt{A_{0}I/3})$ , because it can be written as a linear combination with rational integer coefficients in   $\epsilon_{1}^{4}$ , $\epsilon_{1}^{3} \epsilon_{2}$,
  $\epsilon_{1}^{2} \epsilon_{2}^{2}$, 
$\epsilon_{1} \epsilon_{2}^{3}$ and $\epsilon_{2}^{4}$.

\noindent We can similarly show that that $\eta(x_{1} , y_{1})^{3} \eta(x_{2} , y_{1})$ is also an integer in $\mathbb{Q}(\sqrt{A_{0}I/3})$.
\end{proof}

  For every polynomial $P(z) = a_{n}z^{n} + a_{n-1}z^{n-1} + \ldots  + a_{1}z + a_{0}$, we define 
$$
P^{*}(x , y) = x^{n} P(y/x) =  a_{0}x^{n} + a_{1}x^{n-1}y + \ldots + a_{n-1}x y^{n-1} + a_{n}y^{n} .
$$
Let $A_{r,g}$ and $B_{r,g}$ be as in (\ref{AB2}) and
 $$
 C_{r,g}(z) = A_{r,g}(1 - z)  , \  D_{r , g}(z) = B_{r,g}(1 - z), 
 $$
where $A_{r , g}$ and $B_{r , g}$ are the polynomials in Lemma \ref{hyp2}. 
 For $z \neq 0$, we have $D_{r , 0}(z) = z^{r} C_{r,0}(z^{-1})$, hence

\begin{eqnarray}\label{star2} \nonumber 
A^{*}_{r , 0}(z , z - \bar{z}) 
& = & z^{r} A_{r , 0} (1 - \frac{\bar{z}}{z}) = z^{r} C_{r,0}(\frac{\bar{z}}{z}) \\   
& = & \bar{z}^{r} D_{r,0}(\frac{z}{\bar{z}}) = \bar{z}^{r} B_{r,0}(1 - \frac{z}{\bar{z}}) \\  \nonumber
& = & B^{*}_{r , 0} (\bar{z} , \bar{z} - z) = \bar{B}^{*}_{r,0}(z , z -\bar{z} ) .
\end{eqnarray}

\begin{lemma}\label{242}
For any pair of integers $(x , y)$, 
$$
A^{*}_{r,g}(\xi^{4}(x , y), \xi^{4}(x , y) -\eta^{4}(x , y))$$
  and
   $$B^{*}_{r,g}(\xi^{4}(x , y), \xi^{4}(x , y) -\eta^{4}(x , y) )$$
    are algebraic integers in 
$\mathbb{Q}(\sqrt{A_{0}I/3})$.
\end{lemma}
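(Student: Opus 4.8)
I need to show that for any integer pair $(x,y)$, the quantities
$$A^{*}_{r,g}\bigl(\xi^{4}(x,y),\,\xi^{4}(x,y)-\eta^{4}(x,y)\bigr)\quad\text{and}\quad B^{*}_{r,g}\bigl(\xi^{4}(x,y),\,\xi^{4}(x,y)-\eta^{4}(x,y)\bigr)$$
are algebraic integers in $\mathbb{Q}(\sqrt{A_{0}I/3})$. Let me sketch my approach.

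First, I would unwind the definition of the homogenization operator. For a polynomial $P(z)=\sum_{m}c_m z^m$ of degree $n$, we have $P^{*}(u,v)=u^{n}P(v/u)=\sum_{m}c_m u^{n-m}v^{m}$. Applying this to $A_{r,g}$ and $B_{r,g}$ from (\ref{AB2}), and writing $u=\xi^{4}$, $v=\xi^{4}-\eta^{4}$, each of $A^{*}_{r,g}$ and $B^{*}_{r,g}$ becomes a $\mathbb{Z}$-linear combination (the binomial-coefficient products are rational, but I must check they give integers after homogenization) of monomials $u^{n-m}v^{m}=(\xi^{4})^{n-m}(\xi^{4}-\eta^{4})^{m}$.

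My plan is to reduce everything to two already-established facts from Lemma \ref{nr2}. The key input is that for every integer pair $(x,y)$, both $\xi^{4}(x,y)$ and $\eta^{4}(x,y)$ are algebraic integers in $\mathbb{Q}(\sqrt{A_{0}I/3})$. Since the ring of integers of this field is closed under addition, subtraction and multiplication, $u=\xi^{4}$ and $v=\xi^{4}-\eta^{4}$ are both algebraic integers in $\mathbb{Q}(\sqrt{A_{0}I/3})$, and hence so is any monomial $u^{a}v^{b}$. What remains is to verify that the coefficients appearing in $A^{*}_{r,g}$ and $B^{*}_{r,g}$ are rational \emph{integers}. Concretely, from (\ref{AB2}) the coefficient of $(-z)^{m}$ in $A_{r,g}$ is the product of binomial coefficients $\binom{r-g+1/4}{m}\binom{2r-g-m}{r-g}$; after homogenization this becomes the coefficient of $u^{n-m}v^{m}$, and I would show that the half-integral binomial coefficient $\binom{r-g+1/4}{m}$ combines with the others to yield a rational integer.

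The step I expect to be the genuine obstacle is precisely this integrality of coefficients, because the upper parameter $r-g+\tfrac14$ is not an integer, so $\binom{r-g+1/4}{m}$ is a priori only rational. The plan is to invoke the structure established earlier: since $\xi^{4}$ and $\eta^{4}$ lie in $\mathbb{Q}(\sqrt{A_{0}I/3})$, the relevant products $\xi^{4}\eta^{4}$ and symmetric combinations are accounted for, and the hypergeometric identity (\ref{ABF2}) together with the explicit binomial expressions forces the homogenized coefficients to clear denominators. I would argue this either directly, by rewriting $\binom{r-g+1/4}{m}\binom{2r-g-m}{r-g}$ and checking the $4$-adic and other local valuations are nonnegative, or by appealing to the analogous integrality statement from Lemma $4$ of \cite{Ev2} (adapted to quartic forms in \cite{Akh2}), which is the source these polynomials are drawn from. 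Once the coefficients are seen to be rational integers, the conclusion is immediate: a $\mathbb{Z}$-linear combination of algebraic integers in $\mathbb{Q}(\sqrt{A_{0}I/3})$ is again an algebraic integer in $\mathbb{Q}(\sqrt{A_{0}I/3})$, which completes the proof.
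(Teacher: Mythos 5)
Your reduction to coefficient integrality fails at its central step: the coefficients of $A^{*}_{r,g}$ and $B^{*}_{r,g}$ are \emph{not} rational integers, and no rearrangement will make them so. Homogenization does not alter coefficients at all ($P^{*}(u,v)=\sum_{m}c_{m}u^{n-m}v^{m}$ has exactly the same $c_{m}$ as $P$), and here $c_{m}=(-1)^{m}\binom{r-g+1/4}{m}\binom{2r-g-m}{r-g}$ has a denominator that is a genuine power of $2$. The paper's own computations in the proof of Lemma \ref{nv12} exhibit this: $A_{1,0}(z)=2-\frac{5}{4}z$, $A_{2,0}(z)=6-\frac{27}{4}z+\frac{45}{32}z^{2}$, and so on --- that is precisely why the paper multiplies by $4$, $\frac{32}{3}$, $128$, $\frac{2048}{5}$, \dots\ to produce integer polynomials there. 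So the direct check you propose (``$4$-adic and other local valuations are nonnegative'') would come out \emph{negative}, and the closing argument ``a $\mathbb{Z}$-linear combination of algebraic integers is an algebraic integer'' never becomes available. Your fallback citation is also empty: Lemma $4$ of \cite{Ev2}, as adapted in Lemma \ref{hyp2}, contains only the analytic statements (i)--(iii) (the power-series identity, the bound on $|A_{r,g}|$, the non-vanishing of a determinant) and no integrality statement.

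The missing idea --- and the actual content of the paper's short proof --- is that integrality comes from divisibility of the \emph{second argument}, not from integrality of the coefficients. By Lemma \ref{nr2}, $v=\xi^{4}-\eta^{4}=8\sqrt{3IA_{4}}\,F(x,y)$ with $F(x,y)\in\mathbb{Z}$, so each monomial contributes $v^{m}=8^{m}\bigl(\sqrt{3IA_{4}}\bigr)^{m}F(x,y)^{m}$, an explicit factor $8^{m}$ times an algebraic integer. Since the denominator of $\binom{r-g+1/4}{m}$ involves only the prime $2$ (a classical fact for binomial coefficients with argument in $\tfrac{1}{4}\mathbb{Z}$), with $2$-adic valuation at most $2m+v_{2}(m!)\leq 3m-1<3m$, the product $8^{m}c_{m}$ is a rational integer; hence $c_{m}(\xi^{4})^{n-m}v^{m}$ is an algebraic integer of $\mathbb{Q}(\sqrt{A_{0}I/3})$, and summing over $m$ gives the lemma. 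This denominator-clearing via the divisibility of $\xi^{4}-\eta^{4}$ is exactly what the paper invokes as Lemma 4.1 of \cite{Chu2}. Your proposal uses only that $\xi^{4}$ and $\xi^{4}-\eta^{4}$ are algebraic integers and never uses the identity $\xi^{4}-\eta^{4}=8\sqrt{3IA_{4}}\,F(x,y)$; without that input the statement is out of reach (the part of your argument handling the first argument $\xi^{4}$ is fine, but it is not where the difficulty lies).
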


\begin{proof}
It is clear that  $$A^{*}_{r,g}(\xi^{4}(x , y), \xi^{4}(x , y) -\eta^{4}(x , y))$$  and $$B^{*}_{r,g}(\xi^{4}(x , y), \xi^{4}(x , y) -\eta^{4}(x , y) )$$ belong to  $\mathbb{Q}(\sqrt{A_{0}I/3})$. So we need only show that they are algebraic integers. This follows immediately from Lemma 4.1 of \cite{Chu2} since  
$$
\xi^{4}(x , y) -\eta^{4}(x , y) =  8 h \sqrt{3I A_{4}}F(x , y).
$$ 
\end{proof}

We now proceed to show that for any $r \in \mathbb{Z}$,  $\Lambda_{r,0}$ and $\Lambda_{r , 1}^{4}$ are integers in $\mathbb{Q}(\sqrt{A_{0}I/3})$.
\begin{eqnarray*}
 \Lambda_{r,g} & = &\left(9|A_{4}|\right)^{\frac{1-g}{4}} \xi_{1}^{4r} \xi_{1}^{1 - g}\xi_{2} \Sigma_{r,g}\\ \nonumber
 & = & \left(9|A_{4}|\right)^{\frac{1-g}{4}}\left(\xi_{1}^{1 -g} \eta_{2}A^{*}_{r,g}(\xi_{1}^{4} , \xi_{1}^{4} - \eta_{1}^{4}) - \xi_{1}^{-g}\xi_{2} \eta_{1} B^{*}_{r,g}(\xi_{1}^{4} , \xi_{1}^{4} - \eta_{1}^{4})\right).
\end{eqnarray*}
For $g=0$, we have 
$$
\Lambda_{r,0} = \left(9|A_{4}|\right)^{\frac{1}{4}}\left(\xi_{1} \eta_{2}A^{*}_{r,0}(\xi_{1}^{4} , \xi_{1}^{4} - \eta_{1}^{4}) - \xi_{2} \eta_{1} B^{*}_{r,0}(\xi_{1}^{4} , \xi_{1}^{4} - \eta_{1}^{4})\right)
$$
By Lemma \ref{ai2}, $\left(9|A_{4}|\right)^{\frac{1}{4}} \left(\xi_{1} \eta_{2}\right)$ and $\left(9|A_{4}|\right)^{\frac{1}{4}} \left(\xi_{2} \eta_{1}\right)$ are integers in $\mathbb{Q}(\sqrt{A_{0}I/3})$. They are also complex conjugates.
From (\ref{star2}), Lemma \ref{242} and the characterization of algebraic integers in quadratic number fields, we conclude that $\Lambda_{r , 0} \in \mathbb{Z}  \sqrt{A_{0}I/3}$.
 By Lemma \ref{ai2} and Lemma \ref{242}, $\Lambda_{r , 1}^{4}$ is an algebraic integer in $\mathbb{Q}(\sqrt{A_{0}I/3})$. Next we will show that $\Lambda^{4}_{r , 1}$ is not an integer when $\Sigma_{r , 1}$ is nonzero.

 Suppose $\Lambda_{r,1}^{4} \in \mathbb{Z}$. Then we have for some  $\rho  \in \{ \pm1 , \pm i \}$, that 
 $\rho \Lambda_{r,1} =  \bar{\Lambda}_{r,1}$. Hence by the definition of $\Lambda_{r,1}$ and since $\xi_{i}$ and $\eta_{i}$ are complex conjugates,
 \begin{eqnarray}\nonumber
 \rho \Sigma_{r , 1} &= &\xi_{1}^{-4r} \xi_{2}^{-1}  \bar{\Lambda}_{r,1}  \\ \nonumber
  & =& \xi_{1}^{-4r} \xi_{2}^{-1} \eta_{1}^{4r}\eta_{2} \left( \frac{\xi_{2}}{\eta_{2}} A_{r,1} \left(1 - \frac{\xi_{1}^{4}}{\eta_{1}^{4}} \right) - \frac{\xi_{1}}{\eta_{1}}B_{r , 1} \left( 1 - \frac{\xi_{1}^{4}}{\eta_{1}^{4}} \right) \right)\\  \nonumber
  & =& \xi_{1}^{-4r} \xi_{2}^{-1} \eta_{1}^{4r}\eta_{2} \left( \frac{\xi_{2}}{\eta_{2}} A_{r,1} \left(1 - \frac{\xi_{1}^{4}}{\eta_{1}^{4}} \right) - \frac{\xi_{1}}{\eta_{1}}B_{r , 1} \left( 1 - \frac{\xi_{1}^{4}}{\eta_{1}^{4}} \right) \right)\\  \nonumber
  &=&  \frac{\eta_{1}^{4r}}{\xi_{1}^{4r}} \left(A_{r,1}\left( 1- \frac{\xi_{1}^{4}}{\eta_{1}^{4}}\right) - \frac{\xi_{1} \eta_{2}}{\xi_{2}\eta_{1}} B_{r ,1} \left( 1- \frac{\xi_{1}^{4}}{\eta_{1}^{4}}\right) \right) .
    \end{eqnarray}
 This, together with Lemmas \ref{ai2} and  \ref{242}, implies that 
\begin{equation}\label{ronf}
\rho\Sigma_{r,1} \in \mathbb{Q}(\sqrt{A_{0}I/3}).
\end{equation} 
We have, by definition,
$$
\Sigma_{r,g} = \frac{\eta_{2}}{\xi_{2}}A_{r,g}(z_{1}) - \frac{\eta_{1}}{\xi_{1}} B_{r,g}(z_{1}) = \frac{\eta}{\xi} \left[\frac{\eta_{2}/\eta}{\xi_{2}/\xi}A_{r,g}(z_{1}) - \frac{\eta_{1}/\eta}{\xi_{1}/\xi} B_{r,g}(z_{1})\right],
$$
where $\eta = \eta(1 , 0)$ and $\xi = \xi (1 , 0)$.
 By  Lemmas \ref{122} and \ref{242},   $$ \frac{\eta_{2}/\eta}{\xi_{2}/\xi}A_{r,g}(z_{1}) - \frac{\eta_{1}/\eta}{\xi_{1}/\xi} B_{r,g}(z_{1}) \in \mathbb{Q}(\sqrt{A_{0}I/3}).$$
  Hence
\begin{equation}\label{fff}
\mathfrak{f} = \mathbb{Q}(\sqrt{A_{0}I/3} , \rho\Sigma_{r,g})  = \mathbb{Q}(\sqrt{A_{0}I/3} , \rho\frac{\xi}{\eta}).
\end{equation}
If we choose complex number $X$ so that $\xi (X , 1) = \eta(X , 1)$ then by Lemma \ref{122}, $X  \in  \mathfrak{f}$. We have $F (X , 1) = \frac{1}{8\sqrt{3IA_{4}}}(\xi^{4}( X , 1) - \eta^{4}(X , 1) )= 0$. Since we have assumed that $F$ is irreducible,  $X$ has degree $4$ over $\mathbb{Q}$.
  But from (\ref{ronf}) and the definition of number field $\mathfrak{f}$ in (\ref{fff}),
$$
X  \in \mathfrak{f} = \mathbb{Q}(\sqrt{A_{0}I/3}).
$$
This contradicts the fact that $X$ has degree $4$ over $\mathbb{Q}$.  We conclude that $\Lambda_{r,1}$ can not be a rational integer. 

From the well-known characterization of algebraic integers in quadratic fields, we may therefore conclude that,
If $\Lambda_{r,g} \neq 0$,  then  for $g \in \{ 0 , 1\}$
  \begin{equation}\label{ub2}
 |\Lambda_{r,g}| \geq 2^{\frac{-g}{4}}(-A_{0}I/3)^{\frac{1}{2} - \frac{3g}{8}}.
 \end{equation}
 
 %-------------------------------------------------
 \section{Approximating Polynomials}
 %-------------------------------------------------
 
 In order to apply (\ref{lb2}), we must make sure that $\Lambda_{r , g}$ or equivalently $\Sigma_{r , g}$ does not vanish. First we will show that for small $r$, $\Sigma_{r , 0} \neq 0$.
  \begin{lemma} \label{nv12}
 Suppose that $(x , y)$ is a pair of solutions to $F(x , y) = \pm 1$ with $I > 135 $ or a pair of solutions to $\left| F(x , y) \right| \leq h$ with $ |y|> \frac{h^{3/4}}{(3I)^{1/8}}$ . For this pair of solutions and $r \in \{1 , 2, 3, 4, 5\}$, we have
   $$
   \Sigma_{r,0} \neq 0.
   $$
   \end{lemma}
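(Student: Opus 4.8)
The plan is to use that, for the single solution $(x,y)$ under consideration, the two resolvent ratios occurring in the definition of $\Sigma_{r,g}$ coincide, so that $\Sigma_{r,0}$ collapses to one polynomial evaluated at one point. Put $z = 1 - \eta^4(x,y)/\xi^4(x,y)$. Since $\eta = \bar\xi$ we have $|\eta/\xi| = 1$, and the defining formula for $\Sigma_{r,g}$ gives
$$\Sigma_{r,0} = \frac{\eta(x,y)}{\xi(x,y)}\bigl(A_{r,0}(z) - B_{r,0}(z)\bigr).$$
Here $\eta/\xi \neq 0$, and $z \neq 0$ because $z = 0$ would force $\xi^4 = \eta^4$, hence $F(x,y) = 0$, which is impossible for an integer point on an irreducible quartic. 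Thus $\Sigma_{r,0} = 0$ is equivalent to $z$ being a nonzero root of the polynomial $A_{r,0} - B_{r,0}$, and the entire assertion reduces to showing that, under the stated hypotheses, $z$ is not such a root for $r \in \{1,2,3,4,5\}$.

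From (\ref{AB2}) one reads off $A_{r,0}(0) = B_{r,0}(0) = \binom{2r}{r}$, so the constant terms cancel and
$$A_{r,0}(z) - B_{r,0}(z) = z\,P_r(z), \qquad \deg P_r = r - 1, \quad P_r(0) = -\frac{1}{2}\binom{2r-1}{r} \neq 0.$$
Hence for $r = 1$ the factor $P_1$ is a nonzero constant and $\Sigma_{1,0} \neq 0$ immediately; for $r \in \{2,3,4,5\}$ I would compute $P_r$ explicitly and let $\rho_r$ be the least modulus of a zero of $P_r$, so that $P_r(z) \neq 0$ whenever $|z| < \rho_r$. With $\rho = \min_{2 \le r \le 5}\rho_r$, the lemma follows as soon as $|z| < \rho$. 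It is worth recording why no cruder estimate suffices: the relation $D_{r,0}(z) = z^r C_{r,0}(z^{-1})$ together with $C_{r,0}(z) = A_{r,0}(1-z)$, $D_{r,0}(z) = B_{r,0}(1-z)$ yields $B_{r,0}(z) = (1-z)^r A_{r,0}\bigl(-z/(1-z)\bigr)$; on the circle $|1-z| = 1$, where our $z$ necessarily lies since $|\eta/\xi| = 1$, one has $-z/(1-z) = \bar z$ and therefore $|A_{r,0}(z)| = |B_{r,0}(z)|$. So a comparison of absolute values gives no information, and the obstruction is genuinely the location of the zeros of $A_{r,0} - B_{r,0}$.

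It then remains to convert the Diophantine hypotheses into the bound $|z| < \rho$. By Lemma \ref{nr2} we have $\xi^4 - \eta^4 = 8\sqrt{3IA_4}\,F(x,y)$, so $|z| = |\xi^4 - \eta^4|/|\xi|^4 = 8\sqrt{|3IA_4|}\,|F(x,y)|/|\xi|^4$, while (\ref{c62}) gives $|\xi|^4 = |H(x,y)|\sqrt{|A_4|}/3$. For a solution of $|F(x,y)| = 1$ this yields $|z| = 24\sqrt{3I}/|H(x,y)|$, and Lemma \ref{red2} (so that $|H(x,y)| \ge 36Iy^4 \ge 36I$ for $y \neq 0$) gives $|z| \le 2/\sqrt{3I}$; the hypothesis $I > 135$ is exactly what is needed to push $2/\sqrt{3I}$ below $\rho$. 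For the inequality $|F(x,y)| \le h$ with $|y| \ge h^{3/4}/(3I)^{1/8}$ I would instead feed in the lower bound for $|\xi|^4$ established in Section \ref{GPe2}, which again makes $|z|$ small enough that $z$ lies inside the zero-free disc; the finitely many points with $y = 0$ are checked by hand.

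The step I expect to be the real work is the explicit determination of $\rho$, that is, verifying that none of the polynomials $A_{r,0} - B_{r,0}$ with $2 \le r \le 5$ has a nonzero root of modulus below the bound the hypotheses supply. For $r = 2$ the only nonzero root is $z = 2$ (excluded since $|z| < 2$), and for $r = 3$ the roots of $P_3$ are real and off the circle; but for $r = 4, 5$ the relevant roots are complex and can sit close to the circle $|1-z| = 1$, and it is precisely this proximity that fixes the numerical constant $135$ and the shape of the threshold on $|y|$. Everything else --- the collapse of $\Sigma_{r,0}$ to a single polynomial, the cancellation of constant terms, and the passage from the size hypotheses to a bound on $|z|$ via (\ref{c62}) and Lemma \ref{red2} --- is routine.
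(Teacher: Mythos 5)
Your proposal attacks a different statement from the one the lemma actually makes, because you have misread the definition of $\Sigma_{r,g}$. In the paper, $\Sigma_{r,g} = \frac{\eta_2}{\xi_2}A_{r,g}(z_1) - \frac{\eta_1}{\xi_1}B_{r,g}(z_1)$ is built from \emph{two distinct} solutions: only the argument $z_1$ of the polynomials comes from the first solution, while the prefactor $\eta_2/\xi_2$ comes from the second (these are the two largest of the four solutions assumed in Section \ref{GPe2}, and this is how the lemma is consumed in Lemma \ref{fin2}, where $\Sigma_{r,0}\neq 0$ for the pair $\xi_1,\xi_2$ is what licenses the application of Lemma \ref{c2}). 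Your collapse $\Sigma_{r,0} = \frac{\eta}{\xi}\bigl(A_{r,0}(z) - B_{r,0}(z)\bigr)$ is valid only when the two solutions coincide; for distinct co-prime solutions the ratios $\eta_1/\xi_1$ and $\eta_2/\xi_2$ are different points of the unit circle, so non-vanishing of $\Sigma_{r,0}$ is \emph{not} the statement that $z_1$ avoids the zeros of the fixed polynomial $A_{r,0}-B_{r,0}$, and no bound of the form $|z_1|<\rho$ can prove it. Your own observation that $|A_{r,0}(z_1)| = |B_{r,0}(z_1)|$ on the circle $|1-z_1|=1$ shows exactly why: $\Sigma_{r,0}=0$ is equivalent to $\frac{\eta_2/\xi_2}{\eta_1/\xi_1} = B_{r,0}(z_1)/A_{r,0}(z_1)$, an equality between two quantities that always have the same modulus, so size considerations alone cannot rule it out. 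A symptom of the misreading is your claim that the case $r=1$ is immediate; under the correct definition even $r=1$ is nontrivial and genuinely needs the hypotheses of the lemma.

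The missing ingredient is arithmetic input tying the two solutions together, and that is precisely what the paper's proof supplies. It assumes $\Sigma_{r,0}=0$, rewrites this as $\eta_2^4/\xi_2^4 = \eta_1^4\,B_r^*(\xi_1^4,\xi_1^4-\eta_1^4)^4\big/\bigl(\xi_1^4\,A_r^*(\xi_1^4,\xi_1^4-\eta_1^4)^4\bigr)$, and then works with ideals in $\mathbb{Q}\bigl(\sqrt{A_0I/3}\bigr)$: the ideal generated by $\xi_1^4(A_r^*)^4 - \eta_1^4(B_r^*)^4$ divides $(\xi_2^4-\eta_2^4)\,\mathfrak{I}_r$, and $N(\mathfrak{I}_r)$ is bounded, for each $r\in\{1,\dots,5\}$ separately, by explicit B\'ezout-type identities between $A_r^*$ and $B_r^*$ (this is where the finite range of $r$ and the hard computation live). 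Combined with $A_r^4(z)-(1-z)B_r^4(z)=z^{2r+1}F_r(z)$, this yields an upper bound on $|\xi_1|$ that contradicts the gap-principle lower bounds (\ref{982}) when $I>135$, respectively (\ref{98h2}) when $|y|>h^{3/4}/(3I)^{1/8}$; note that those lower bounds on $|\xi_1|$ are themselves unavailable in your single-solution reading, since they come from the existence of the smaller solutions $\xi_{-1},\xi_0$. Finally, even granting your interpretation, the proposal is incomplete on its own terms: the decisive step, computing the smallest moduli of the nonzero roots of $A_{r,0}-B_{r,0}$ for $r=4,5$ and checking them against the bound on $|z|$, is announced but never carried out.
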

   \begin{proof}
   Let $r \in \{1 , 2,  3, 4, 5 \}$. Suppose that $\Sigma_{r , 0} = 0$. From (\ref{ABF2}), we can find for each
  $r$, a polynomial $F_{r}(z) \in \mathbb{Q}[z]$, satisfying 
 $$
 A_{r,0}(z)^{4} - (1 -z)B_{r,0}^{4} = z^{2r+1} F_{r}(z).
 $$
 In fact, using Maple, we have
 $$
 A_{1}(z) = 4 A_{1, 0}(z) = 8 - 5z , 
 $$
 $$
B_{1}(z) = 4 B_{1 , 0}(z) = 8 - 3z , 
$$
$$
F_{1}(z) = 320 - 320z + 81z^{2} ,
$$
 $$
 A_{2}(z) = \frac{32}{3} A_{2,0}(z) = 64 -72z + 15 z^{2}, 
 $$
  $$
  B_{2}(z)= \frac{32}{3} B_{2,0}(z) = 64 -56z + 7 z^{2},
  $$ 
 $$F_{2}(z) = 86016 - 172032z + 114624z^{2} - 28608z^{3} + 2401z^{4},
 $$
 $$
 A_{3}(z)= 128A_{3,0}(z)= 2560 - 4160z + 1872z^{2} -195z^{3},
  $$ 
   $$
  B_{3}(z)= 128B_{3,0}(z)=2560 - 3520z + 1232z^{2} - 77z^{3}, 
  $$ 
  \begin{eqnarray*}
  F_{3}(z) = & & 14057472000 - 42172416000z \\\nonumber
& & + 48483635200z^{2} - 26679910400z^{3} \\ \nonumber
& &+ 7150266240z^{4}
  - 839047040z^{5} \\\nonumber
& & + 35153041z^{6},  
\end{eqnarray*}
 \begin{eqnarray*}
  A_{4}(z) &= & \frac{2048}{5}A_{4,0}(z) \\
& = & 28672 - 60928z + 42432z^{2} - 10608z^{3} + 663z^{4},
 \end{eqnarray*}
  \begin{eqnarray*}
  B_{4}(z) & = & \frac{2048}{5}B_{4,0}(z)\\
& =&  28672 - 53760z + 31680z^{2} -6160z^{3} +  231z^{4},
  \end{eqnarray*}
  \begin{eqnarray*}
  F_{4}(z) = & &13989396348928- 55957585395712z \\\nonumber
 & & + 91916125077504z^{2} - 79896826347520z^{3} \\\nonumber
  & & +39463764078592z^{4} -11050000539648z^{5}\\\nonumber
 & &   + 1648475542656z^{6}   - 113348764800z^{7}  \\\nonumber
& &   + 2847396321z^{8},
\end{eqnarray*}
  \begin{eqnarray*}
   A_{5}(z) & = & \frac{8192}{21}A_{5,0}(z) \\ \nonumber
& = &98304 - 258048z + 243712z^{2} \\\nonumber
& & - 99008z^{3} + 15912z^{4} - 663z^{5},
  \end{eqnarray*}
  \begin{eqnarray*}
  B_{5}(z) &= &\frac{8192}{21}B_{5,0}(z)\\
& = &98304 - 233472z + 194560z^{2}\\ \nonumber
& & - 66880z^{3} + 8360z^{4} -209z^{5}.
  \end{eqnarray*}
  and
\begin{eqnarray*}
 F_{5}(z) = & &121733331812352  - 608666659061760z  \\\nonumber
 & & + 1301756554248192z^{2}-1555026262622208z^{3}\\\nonumber
& & +1136607561252864z^{4} -523630732640256z^{5} \\ \nonumber
 & & + 151029162176512z^{6}  -26204424888320z^{7} \\ \nonumber
& & +2515441608384z^{8} - 113971885760z^{9} \\ \nonumber
& &   +1908029761z^{10}.
 \end{eqnarray*}
 We also define $A_{r}^{*}$ and $B_{r}^{*}$ via
 $$
 A_{r}^{*}(x , y) = x^{r}A_{r}(y/x) ,
 $$
and
 $$
 B_{r}^{*}(x , y) = x^{r}B_{r}(y/x). 
 $$
 Since $\Sigma_{r , 0} $ is assumed to be zero,
 $$
 \frac{\eta_{2}^{4}}{\xi_{2}^{4}} = \frac{\eta_{1}^{4} (B_{r}^{*}(\xi_{1}^{4}, \xi_{1}^{4} -\eta_{1}^{4}))^{4}} {\xi_{1}^{4} (A_{r}^{*}(\xi_{1}^{4} ,  \xi_{1}^{4} - \eta_{1}^{4}))^{4}} .
 $$
Let $\mathfrak{I}_{r}$ be the integral ideal in $\mathbb{Q}(\sqrt{IA_{0}/3})$ generated by  $\xi_{1}^{4} (A^{*}(\xi_{1}^{4} ,  \xi_{1}^{4} - \eta_{1}^{4}))^{4}$ and $\eta_{1}^{4} (B^{*}(\xi_{1}^{4} ,  \xi_{1}^{4} - \eta_{1}^{4}))^{4}$ and $N(\mathfrak{I}_{r})$ be the absolute norm of $\mathfrak{I}_{r}$. Since the ideal generated by $\xi_{1}^{4} (A_{r}^{*}(\xi_{1}^{4} ,  \xi_{1}^{4} - \eta_{1}^{4}))^{4} - 
\eta_{1}^{4}( B_{r}^{*}(\xi_{1}^{4} ,  \xi_{1}^{4} - \eta_{1}^{4}))^{4}$ divides  $(\xi_{2}^{4} -\eta_{2}^{4}).  \mathfrak{I}_{r}$, we obtain 
\begin{eqnarray*}
& & |\xi_{1}|^{4(4r+1)} | A_{r}^{4}(z_{1}) - (1 - z_{1}) B_{r}^{4}(z_{1})|  \\ \nonumber
& = &  |\xi_{1}^{4} (A_{r}^{*}(\xi_{1}^{4} ,  \xi_{1}^{4} - \eta_{1}^{4}))^{4} - \eta_{1}^{4} (B_{r}^{*}(\xi_{1}^{4} ,  \xi_{1}^{4} - \eta_{1}^{4}))^{4}.
\end{eqnarray*}
Since $\mathfrak{I}_{r}$ is an imaginary quadratic field, by (\ref{a'2}), we get
$$|\xi_{1}|^{4(4r+1)} | A_{r}^{4}(z_{1}) - (1 - z_{1}) B_{r}^{4}(z_{1})| \leq  N(\mathfrak{I}_{r})^{1/2} |\xi_{2}^{4} - \eta_{2}^{4}| $$
By  (\ref{ABF2}),
$$A_{r}^{4}(z_{1}) -  (1 - z_{1}) B_{r}^{4}(z_{1}) =  z_{1}^{2r+1} F_{r}(z_{1}) ,$$
and so we conclude
$$ 
|z_{1}|^{2r+1} |F_{r}(z_{1})| \leq  N(\mathfrak{I}_{r})^{1/2} |\xi_{2}^{4} - \eta_{2}^{4}| |\xi_{1}|^{-4(4r+1)} ;
$$
i.e.
 $$
 1 \leq  \frac{ N(\mathfrak{I}_{r})^{1/2} |\xi_{2}^{4} - \eta_{2}^{4}| |\xi_{1}|^{-4(4r+1)}}{ |z_{1}|^{2r+1} |F_{r}(z_{1})| }.
 $$
Since $\xi_{1}^{4} = (\xi_{1}^{4} - \eta_{1}^{4}) (1 - \frac{\eta_{1}^{4}}{\xi_{1}^{4}} )^{-1} =  (\xi_{1}^{4} - \eta_{1}^{4}) z_{1}^{-1}$we obtain
$$
1 \leq  \frac{ N(\mathfrak{I}_{r})^{1/2} |\xi_{2}^{4} - \eta_{2}^{4}||\xi_{1}^{4} - \eta_{1}^{4}|^{-4r-1} |z_{1}|^{2r}}{|F_{r}(z_{1})| }.
$$
Noting that $|z_{1}| = \left |\xi_{1}^{-4}\right|\left|\xi_{1}^{4} - \eta_{1}^{4}\right|$ and $\left|\xi_{i}^{4} - \eta_{i}^{4}\right| = \left|8h\sqrt{3IA_{4}}F(x , y)\right|$, we obtain for $r \in \{ 1 , 2 , 3 , 4 , 5\}$,
\begin{equation}\label{442}
|\xi_{1}|^{8r}  \leq  \frac{ (N(\mathfrak{I}_{r})^{1/2} |\xi_{1}^{4} - \eta_{1}^{4}|^{-4r-1})|8h\sqrt{3IA_{4}}|^{2r+1}}{|F_{r}(z_{1})|}.
\end{equation}

To estimate  $N(\mathfrak{I}_{r})^{1/2} $, we choose a finite extension $\mathbf{M}$ of $\mathbb{Q}(\sqrt{A_{0}I/3})$ so that the ideal generated by $\xi_{1}^{4}$ and $\xi_{1}^{4} - \eta_{1}^{4}$ in $\mathbf{M}$ is a principal ideal, with generator $p$, say. We denote the extension of $\mathfrak{I}_{r}$ to $\mathbf{M}$, by $\mathfrak{I}'_{r}$. Let $\mathfrak{r}_{r}$ be the ideal in $\mathbf{M}$ generated by $A_{r}^{*}(u , v)$ and $B_{r}^{*}(u, v)$, where $u = \frac{\xi_{1}^{4}}{p}$ and  $v = \frac{\xi_{1}^{4} - \eta_{1}^{4}}{p}$. 
Since $A_{r}^{*}(x , x- y) = B_{r}^{*}(y , y-x)$,
\begin{eqnarray}\label{ss2}
p^{4r+1}\mathfrak{r}_{r}^{4} B_{r}^{*}(0 , 1)^{4} &\subset &  p^{4r+1}\mathfrak{r}_{r}^{4} (u , B_{r}^{*}(0 , v)^{4}) ( u - v, B_{r}^{*}(0 , v)^{4})  \\ \nonumber
 &\subset &   p^{4r+1}\mathfrak{r}_{r}^{4} (u , B_{r}^{*}(0 , v)^{4}) ( u - v, A_{r}^{*}(v , v)^{4})  \\ \nonumber
 &\subset &   p^{4r+1}\mathfrak{r}_{r}^{4}( u , u - v) (u , B_{r}^{*}(u , v)^{4}) ( u - v, A_{r}^{*}(u , v)^{4})  \\ \nonumber
 &\subset & p^{4r+1} (uA^{*}(u , v)^{4} , (u - v)B_{r}^{*}(u , v)^{4}) = \mathfrak{I}'_{r},
 \end{eqnarray}
where $(m_{1} , \ldots , m_{n})$  denote the ideal in $\mathbf{M}$ generated by $m_{1} , \ldots , m_{n}$. 

We have
$$
A_{1}^{*}( x , y) - B_{1}^{*}(x , y) = -2y. 
$$
Therefore, 
$$
2(v) \subset (A_{1}^{*}( u , v) , B_{1}^{*}(u , v)) \subset  \mathfrak{r}_{1}, 
$$ 
where $(v)$ is the ideal generated by $v$ in $\mathbf{M}$.
Since $B_{1}^{*}(0 , 1) = -3$, it follows from (\ref{ss2}) that 
$$
1296(\xi_{1}^{4} - \eta_{1}^{4})^{5} \subset 1296 p (\xi_{1}^{4} - \eta_{1}^{4})^{4}  = p^{5}16v^{4}B_{1}^{*}(0 , 1)^{4} \subset \mathfrak{I}'_{1}. 
$$
For $r = 2$, we first observe that 
$$
B_{1}^{*}(x , y)A_{2}^{*}( x , y) -A_{1}^{*}(x , y) B_{2}^{*}(x , y) =-10y^{3}
$$
and 
$$
(-32x + 7y)A_{2}^{*}( x , y) -(-32x+15y) B_{2}^{*}(x , y) = 80xy^{2}.
$$
Therefore, by (\ref{ss2}) we have
$$
80(v)^{2}  \subset (-10v^{3} , 80uv^{2})  \subset (A_{2}^{*}( u , v) , B_{2}^{*}(u , v)) \subset  \mathfrak{r}_{2} .
$$ 
Since $B_{2}^{*}(0 , 1) =7$, we have 
$$  
80^{4}\times 7^{4} (\xi_{1}^{4} - \eta_{1}^{4})^{9}  \subset  80^{4}\times 7^{4} p (\xi_{1}^{4} - \eta_{1}^{4})^{8}  = 80^{4}p^{9}v^{8}  B_{2}^{*}(0 , 1)^{4} \subset \mathfrak{I}'_{2}.
$$
When $r = 3$, we have 
$$
B_{2}^{*}(x , y)A_{3}^{*}( x , y) -A_{2}^{*}(x , y) B_{3}^{*}(x , y) =-210y^{5}
$$
\begin{eqnarray*}
& & (1616x^{2}-1078xy+77y^{2})A_{3}^{*}(x , y) \\ \nonumber
&- &(1616x^{2}-1482xy+195y^{2}) B_{3}^{*}(x,y) \\ \nonumber
& = & -16800x^{2}y^{3}.
\end{eqnarray*}
Substituting $77$ for $B_{3}^{*}(0 , 1)$, we conclude  
\begin{eqnarray*} 
& & 16800^{4}\times 77^{4} (\xi_{1}^{4} - \eta_{1}^{4})^{13}  \subset  16800^{4}\times 77^{4} p (\xi_{1}^{4} - \eta_{1}^{4})^{12} \\ \nonumber
&  = & 16800^{4}p^{13}v^{12}  B_{3}^{*}(0 , 1)^{4} \subset \mathfrak{I}'_{3}.
\end{eqnarray*}
For $r = 4$, setting 
\begin{eqnarray*}
G_{4}(x , y)=14178304x^{3}-15889280x^{2}y+4071760xy^{2}-162393y^{3}, \\
H_{4}(x , y) = 14178304x^{3}-19433856x^{2}y+6714864xy^{2}- 466089y^{3},
 \end{eqnarray*}
we may verify that 
$$
B_{3}^{*}(x , y)A_{4}^{*}( x , y) -A_{3}^{*}(x , y) B_{4}^{*}(x , y) =-6006y^{7}
$$
and 
$$
G_{4}(x , y)A_{4}^{*}(x , y) - H_{4}(x , y)B_{4}^{*}(x , y) = -150678528y^{4}x^{3}.
$$
These two identities imply that
$$
150678528 ^{4}\times 231^{4} (\xi_{1}^{4} - \eta_{1}^{4})^{17}  \subset  150678528^{4}\times 231^{4} p (\xi_{1}^{4} - \eta_{1}^{4})^{16}.
$$
Since this latter quantity is equal to $150678528^{4}p^{17}v^{16}  B_{4}^{*}(0 , 1)^{4}$, from (\ref{ss2}) it follows that
$$
150678528 ^{4}\times 231^{4} (\xi_{1}^{4} - \eta_{1}^{4})^{17}  \subset \mathfrak{I}'_{4}.
$$
Finally, for $r = 5$, set
\begin{eqnarray*}
& & G_{5}(x , y) \\ \nonumber
& = & 43706368x^{4}-69346048x^{3}y+32767856x^{2}y^{2} \\ \nonumber
& & -4764782{x}y^{3}+123519y^{4},
\end{eqnarray*}
\begin{eqnarray*}
& & H_{5}(x , y)\\ \nonumber
& = & 43706368x^{4}-80272640x^{3}y+46006896x^{2}y^{2}\\ \nonumber
& & -8845746xy^{3}+391833y^{4} .                                   
\end{eqnarray*} 
Then we have 
$$
B_{4}^{*}(x , y)A_{5}^{*}( x , y) -A_{4}^{*}(x , y) B_{5}^{*}(x , y) =-14586y^{7}
$$
and 
$$
G_{5}(x , y)A_{5}^{*}(x , y) - H_{5}(x , y)B_{5}^{*}(x , y) = - 134424576 y^{5}x^{4}.
$$
These two identities  imply that
$$
134424576^{4}\times 209^{4} (\xi_{1}^{4} - \eta_{1}^{4})^{21}  \subset 134424576^{4}\times 209^{4} p (\xi_{1}^{4} - \eta_{1}^{4})^{20}. 
$$
So by (\ref{ss2}),
$$
134424576^{4}\times 209^{4} (\xi_{1}^{4} - \eta_{1}^{4})^{21}  \subset  134424576 ^{4}p^{21}v^{20}  B_{5}^{*}(0 , 1)^{4} \subset \mathfrak{I}'_{5}.
$$
From the preceding arguments, we are thus able to deduce the following series of inequalities :
$$
N(\mathfrak{I}_{1})^{1/2} |\xi_{1}^{4} - \eta_{1}^{4}|^{-5} \leq 1296,
$$
$$
N(\mathfrak{I}_{2})^{1/2} |\xi_{1}^{4} - \eta_{1}^{4}|^{-9} \leq 560^{4},
$$
$$
N(\mathfrak{I}_{3})^{1/2} |\xi_{1}^{4} - \eta_{1}^{4}|^{-13} \leq (77\times16800)^{4},
$$
$$
N(\mathfrak{I}_{4})^{1/2} |\xi_{1}^{4} - \eta_{1}^{4}|^{-17} \leq (231\times150678528)^{4}
$$
and
 $$
 N(\mathfrak{I}_{5})^{1/2} |\xi_{1}^{4} - \eta_{1}^{4}|^{-21} \leq (134424576 \times 209)^{4}.
 $$
Substituting any of these in (\ref{442}) provides a contradiction to inequality (\ref{982}) when $I > 135$ and a contradition to (\ref{98h2}) when $|y| > \frac{h^{3/4}}{(3I)^{1/8}}$. Note that under both assumptions $I > 135$ and $|y| > \frac{h^{3/4}}{(3I)^{1/8}}$, the function $|z| = \left |\xi^{-4}\right| \left|8h\sqrt{3IA_{4}}F(x , y)\right|$ is small. This makes $\left|F_{r}(z)\right|$ large enough for our contradictions.
\end{proof}

\begin{lemma}\label{nv2} 
If $r \in \mathbb{N}$ and $h \in \{ 0, 1 \}$, then at most one of $\left\{ \Sigma_{r,0}, \Sigma_{r+h,1} \right\}$ can vanish. 
 \end{lemma}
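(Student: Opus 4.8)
The plan is to argue by contradiction: suppose that \emph{both} $\Sigma_{r,0}$ and $\Sigma_{r+h,1}$ vanish, and deduce a violation of the non-vanishing identity (\ref{BA2}) coming from part (iii) of Lemma \ref{hyp2}. The whole argument is essentially two-dimensional linear algebra once the polynomials $A_{r,g}$, $B_{r,g}$ are brought in through (\ref{BA2}).

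First I would unwind the definition $\Sigma_{r,g} = \frac{\eta_2}{\xi_2} A_{r,g}(z_1) - \frac{\eta_1}{\xi_1} B_{r,g}(z_1)$. Clearing the denominators $\xi_1\xi_2$, the two hypotheses $\Sigma_{r,0}=0$ and $\Sigma_{r+h,1}=0$ become the homogeneous relations
\begin{align*}
(\eta_2\xi_1)\,A_{r,0}(z_1) - (\eta_1\xi_2)\,B_{r,0}(z_1) &= 0, \\
(\eta_2\xi_1)\,A_{r+h,1}(z_1) - (\eta_1\xi_2)\,B_{r+h,1}(z_1) &= 0,
\end{align*}
which I read as a linear system in the single pair $(\eta_2\xi_1,\ \eta_1\xi_2)$. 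The next step is to observe that this pair is nonzero: $z_1 = 1 - \eta_1^4/\xi_1^4$ is defined, so $\xi_1\neq 0$, and since $\eta=\bar\xi$ on real arguments we also have $\eta_1\neq 0$; moreover $\xi_2\neq 0$ because $\eta_2/\xi_2$ occurs in $\Sigma_{r,g}$, so the entry $\eta_1\xi_2$ alone is nonzero. A homogeneous $2\times 2$ system with a nontrivial solution must have vanishing determinant, which gives exactly
$$A_{r,0}(z_1)\,B_{r+h,1}(z_1) = A_{r+h,1}(z_1)\,B_{r,0}(z_1).$$

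Finally I would rule this equality out by invoking (\ref{BA2}), which asserts precisely $A_{r,0}(z)B_{r+h,1}(z) \neq A_{r+h,1}(z)B_{r,0}(z)$ for all $z\neq 0$ and $h\in\{0,1\}$. The single hypothesis of (\ref{BA2}) that must be checked is $z_1\neq 0$: were $z_1=0$, then $\xi_1^4=\eta_1^4$ and hence $F(x_1,y_1)=\frac{1}{8\sqrt{3IA_4}}(\xi_1^4-\eta_1^4)=0$, which is impossible for a nonzero integer pair because $F$ is irreducible of degree $4$. With $z_1\neq 0$ established, the displayed equality contradicts (\ref{BA2}), so $\Sigma_{r,0}$ and $\Sigma_{r+h,1}$ cannot vanish simultaneously. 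The step I would flag as the main (and really the only) obstacle is precisely this bookkeeping on non-vanishing denominators—confirming $z_1\neq 0$ and $\xi_1,\xi_2\neq 0$—since it is what licenses both the reduction to a determinant condition and the appeal to (\ref{BA2}); the algebraic heart of the statement is already packaged in part (iii) of Lemma \ref{hyp2}.
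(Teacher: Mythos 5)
Your proof is correct and is essentially the paper's own argument: the paper performs the same elimination by expanding a $3\times 3$ determinant with two identical rows (following Bennett), which yields exactly your identity $A_{r,0}(z_1)B_{r+h,1}(z_1)=A_{r+h,1}(z_1)B_{r,0}(z_1)$ and the same contradiction with part (iii) of Lemma \ref{hyp2}. Your explicit verification that $z_1\neq 0$ (and that $\xi_1,\eta_1,\xi_2\neq 0$) is a hypothesis of (\ref{BA2}) that the paper leaves implicit, so that bookkeeping is a welcome addition rather than a deviation.
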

\begin{proof}
  Let $r$ be a positive integer and  $h \in \{ 0 , 1\}$ . Following an argument of Bennett \cite{Ben2}, we define the matrix $\mathbf{M}$:
   \begin{displaymath}
  \mathbf{M} =
  \left( \begin{array}{ccc}
  A_{r,0}(z_{1}) & A_{r+h , 1}(z_{1}) & \frac{\eta_{1}}{\xi_{1}} \\
  A_{r,0}(z_{1}) & A_{r+h , 1}(z_{1}) & \frac{\eta_{1}}{\xi_{1}} \\
 B_{r,0}(z_{1}) & B_{r+h , 1}(z_{1}) & \frac{\eta_{2}}{\xi_{2}} 
 \end{array} \right) .
 \end{displaymath}
  The determinant of $\mathbf{M}$ is zero because it has two identical rows. Expanding along the first row, we get
  \begin{eqnarray*}
0 & = &A_{r, 0}(z_{1}) \Sigma_{r+h , 1} -   A_{r+h, 1}(z_{1}) \Sigma_{r , 0} \\ \nonumber
& & + \frac{\eta_{2}}{\xi_{2}}( A_{r, 0}(z_{1}) B_{r+h ,1}(z_{1})  - A_{r+h, 1}(z_{1}) B_{r ,0}(z_{1}) ).
\end{eqnarray*}
 If $\Sigma _{r,0} = 0$ and  $\Sigma _{r + h,1} = 0$  then $A_{r, 0}(z_{1}) B_{r+h ,1}(z_{1})  - A_{r+h, 1}(z_{1}) B_{r ,0}(z_{1}) = 0$ which contradicts part (iii) of Lemma \ref{hyp2}.
 \end{proof}

 %-------------------------------------------------- 
  \section{An Auxiliary Lemma} 
%------------------------------------------------ 
 We now combine the upper bound for $\Lambda_{r,g}$ obtained in (\ref{ub2}) with the lower bounds from  Lemma \ref{hyp2} to prove the following lemma. 
 
 \begin{lemma}\label{c2}
  If $\Sigma_{r,g} \neq 0$, then
  $$c_{1}(r,g)  |\xi_{1}|^{4r+1-g}|\xi_{2}|^{-3} + c_{2}(r,g) |\xi_{1}|^{-4r-3(1-g)}|\xi_{2}| > 1 ,$$ 
 where we may take
 $$
 c_{1}(1 , 0) = 4\pi  h\left(\frac{3\left|A_{4}\right|^{3/2}}{\left|A_{0}\right|}\right) ^{1/2} 
 $$
 and
 \begin{eqnarray*}
 c_{2}(1 , 0) = 
 27\,h^3 \left(\frac{3\left|A_{4}\right|^{1/2}}{\left|A_{0}\right|}\right) ^{1/2} (9\sqrt{3I
\left| A_{4}\right|})^{2}\frac{5}{128}
 \end{eqnarray*}
 and for $(r , g) \neq (1, 0)$,
 $$
 c_{1}(r , g) = 2 \sqrt{\pi} \, h \left(\frac{3\left|A_{4}\right|^{3/2}}{\left|A_{0}\right|}\right) ^{1/2} \left(\frac{3|A_{4}|}{|A_{0}|^{3/2}}\right) ^{-g/4}\frac{4^{r}} {\sqrt{ r}}
 $$
 and
 \begin{eqnarray*}
&& c_{2}(r , g) = \\
&&27\, h^{2r + 1 - g} \left(\frac{3\left|A_{4}\right|^{1/2}}{\left|A_{0}\right|}\right) ^{1/2} \left(\frac{3|A_{4}|}{|A_{0}|^{3/2}}\right)^{-g/4}(9\sqrt{3I \left| A_{4}\right|})^{2r-g}\frac{\sqrt{2}}{\sqrt{r}\pi4^{r}}.
 \end{eqnarray*}
  \end{lemma}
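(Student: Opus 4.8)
The plan is to pit the arithmetic lower bound for $|\Lambda_{r,g}|$ recorded in (\ref{ub2}) against an analytic upper bound for $|\Sigma_{r,g}|$ built from the Pad\'e data of Lemma \ref{hyp2}. Since $\Lambda_{r,g}=\left(9|A_{4}|\right)^{(1-g)/4}\xi_{1}^{4r+1-g}\xi_{2}\Sigma_{r,g}$ and $\Sigma_{r,g}\neq 0$, the bound (\ref{ub2}) is equivalent to
$$|\Sigma_{r,g}|\geq \frac{2^{-g/4}\left(-A_{0}I/3\right)^{1/2-3g/8}}{\left(9|A_{4}|\right)^{(1-g)/4}\,|\xi_{1}|^{4r+1-g}\,|\xi_{2}|}.$$
Once I also bound $|\Sigma_{r,g}|$ above by a sum of two terms, one carrying the factor $|\xi_{2}|^{-4}$ and one carrying $|\xi_{1}|^{-4(2r+1-g)}$, I will multiply through by $\left(9|A_{4}|\right)^{(1-g)/4}|\xi_{1}|^{4r+1-g}|\xi_{2}|$ and divide by the numerator above; the two resulting summands are precisely $c_{1}(r,g)|\xi_{1}|^{4r+1-g}|\xi_{2}|^{-3}$ and $c_{2}(r,g)|\xi_{1}|^{-4r-3(1-g)}|\xi_{2}|$, and their sum must exceed $1$.

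To produce the upper bound I would first note that, as $(x_{1},y_{1})$ is related to $\omega$ and $\eta_{1},\xi_{1}$ are complex conjugate, one has $\eta_{1}/\xi_{1}=\omega(1-z_{1})^{1/4}$ for the principal branch, where $z_{1}=1-\eta_{1}^{4}/\xi_{1}^{4}$ and $|1-z_{1}|=|\eta_{1}/\xi_{1}|^{4}=1$. Substituting this into the definition of $\Sigma_{r,g}$ and invoking the fundamental identity (\ref{ABF2}) gives the Siegel-type splitting
$$\Sigma_{r,g}=\omega\,z_{1}^{\,2r+1-g}F_{r,g}(z_{1})+\left(\frac{\eta_{2}}{\xi_{2}}-\omega\right)A_{r,g}(z_{1}).$$
The first summand is the genuine approximation term, to be controlled by (\ref{F2}) together with $|z_{1}|=|\xi_{1}^{4}-\eta_{1}^{4}|/|\xi_{1}|^{4}\leq 8h\sqrt{3I|A_{4}|}/|\xi_{1}|^{4}$ (recall $\xi^{4}-\eta^{4}=8\sqrt{3IA_{4}}\,F$ from Lemma \ref{nr2}). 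The second is the error term: here the gap principle of Lemma \ref{6.12} gives $|\eta_{2}/\xi_{2}-\omega|\leq\tfrac{\pi}{8}|z_{2}|\leq \pi h\sqrt{3I|A_{4}|}/|\xi_{2}|^{4}$, while (\ref{A2}) bounds $|A_{r,g}(z_{1})|$ by $\binom{2r-g}{r}$, legitimately since $|1-z_{1}|=1$.

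Feeding these two estimates into the inequality of the first paragraph and clearing denominators, the $A_{r,g}$/gap contribution becomes the $c_{1}$-term, with $c_{1}(r,g)$ proportional to $\left(9|A_{4}|\right)^{(1-g)/4}\pi h\sqrt{3I|A_{4}|}\binom{2r-g}{r}$ divided by the lower bound in (\ref{ub2}); the $F_{r,g}$ contribution becomes the $c_{2}$-term, with $c_{2}(r,g)$ proportional to $\left(9|A_{4}|\right)^{(1-g)/4}\big(8h\sqrt{3I|A_{4}|}\big)^{2r+1-g}$ times the constant in (\ref{F2}), over the same denominator. The asymptotic $\binom{2r-g}{r}\sim 4^{r}/\sqrt{\pi r}$ is what produces the $4^{r}/\sqrt r$ in $c_{1}(r,g)$, and the binomial quotient in (\ref{F2}), which decays like $4^{-r}/\sqrt r$, produces the $\sqrt2/(\sqrt r\,\pi 4^{r})$ in $c_{2}(r,g)$. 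For $(r,g)=(1,0)$ the coefficient of (\ref{F2}) is $\binom{5/4}{2}\binom{3/4}{1}/\binom{3}{1}=5/128$ exactly, and the sharp $\pi/8$ of Lemma \ref{6.12} is used, which accounts for the special shape of $c_{1}(1,0)$ and $c_{2}(1,0)$.

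I expect the only genuine labour to be the constant bookkeeping. Turning the binomial quotient in (\ref{F2}) and the central coefficient $\binom{2r-g}{r}$ of (\ref{A2}) into the clean forms $4^{r}/\sqrt r$ and $\sqrt2/(\sqrt r\,\pi 4^{r})$ calls for Stirling-type estimates, and one must marshal the various powers of $|A_{0}|$, $|A_{4}|$, $I$ and $3$ coming from the prefactor $\left(9|A_{4}|\right)^{(1-g)/4}$, the lower bound $2^{-g/4}(-A_{0}I/3)^{1/2-3g/8}$, the twist $\left(3|A_{4}|/|A_{0}|^{3/2}\right)^{-g/4}$, and the gap constants; this is where the visible integers such as $9$ and $27$ are assembled. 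One should also verify at the outset that $|z_{1}|<1$, so that (\ref{F2}) applies, which is exactly what the running hypotheses ($I>36.6\,h^{2}$ in Theorem \ref{main2}, or $|y|\geq h^{3/4}/(3I)^{1/8}$ in Theorem \ref{main22}) secure through (\ref{982}) and (\ref{98h2}).
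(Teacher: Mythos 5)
Your outline reproduces the paper's proof step for step: the same splitting
$\Sigma_{r,g}=\left(\frac{\eta_{2}}{\xi_{2}}-\omega\right)A_{r,g}(z_{1})+\omega z_{1}^{2r+1-g}F_{r,g}(z_{1})$,
the same use of (\ref{ub2}) as the arithmetic floor, of (\ref{A2}) and (\ref{F2}) for the two analytic terms, and the same Stirling bookkeeping. But one constant-level choice in your outline breaks the statement you are asked to prove. You bound the error term by the $\pi/8$ branch of Lemma \ref{6.12}, and you assert that this ``sharp $\pi/8$'' is what produces $c_{1}(1,0)$. The paper uses the other branch, (\ref{Gap22}): the running hypotheses give $|z_{2}|<1$ (not only $|z_{1}|<1$, which is all you propose to check), so
$\left|\frac{\eta_{2}}{\xi_{2}}-\omega\right|\leq\frac{\pi}{12}|z_{2}|\leq\frac{2\pi h\sqrt{3I|A_{4}|}}{3|\xi_{2}|^{4}}$.
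It is this $\frac{\pi}{12}$ that yields the prefactor $2\pi h\,{2r \choose r}$ in the $c_{1}$-term, hence $c_{1}(1,0)=2\pi h\cdot{2 \choose 1}\cdot(\cdots)=4\pi h(\cdots)^{1/2}$, and after Stirling the coefficient $2\sqrt{\pi}\,4^{r}/\sqrt{r}$. With your $\pi/8$ the identical computation gives constants $3/2$ times as large ($6\pi h$ for $(1,0)$, and $3\sqrt{\pi}\,4^{r}/\sqrt{r}$ in general); since the lemma asserts the inequality with the stated, smaller $c_{1}$, your argument only proves a strictly weaker statement. The special shape of the $(1,0)$ constants has nothing to do with switching gap bounds: it comes solely from evaluating the binomial expressions exactly (${2\choose 1}=2$ and $5/128$) rather than via Stirling.

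A second, more minor slip concerns $c_{2}$: you write the $F_{r,g}$ contribution as $\bigl(8h\sqrt{3I|A_{4}|}\bigr)^{2r+1-g}$ times the binomial quotient of (\ref{F2}), but (\ref{F2}) also carries the factor $(1-|z_{1}|)^{-\frac{1}{2}(2r+1-g)}$. The paper absorbs this factor by inflating the base $8h\sqrt{3I|A_{4}|}$ to $9h\sqrt{3I|A_{4}|}$, and it is precisely these $9$'s that assemble into the factor $27$ and the $(9\sqrt{3I|A_{4}|})^{2r-g}$ visible in the stated $c_{2}(r,g)$; with the base $8$ and the $(1-|z_{1}|)$-factor left floating, the stated $c_{2}$ is not recovered either. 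Both defects are repairable without changing your architecture: invoke (\ref{Gap22}) in place of (\ref{Gap12}) after noting $|z_{2}|<1$, and fold the $(1-|z_{1}|)^{-1/2}$ factor into the base $9$.
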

  
  \begin{proof}
  By the definition of $\Lambda_{r , g}$ and  (\ref{ABF2}), we can write
 $$|\Lambda_{r,g}|= (9|A_{4}|)^{(1-g)/4} |\xi_{1}|^{4r+1-g}  |\xi_{2}|    \left|(\frac{\eta_{2}}{\xi_{2}} - \omega)A_{r,g}(z_{1}) + \omega z_{1}^{2r+1-g}F_{r,g}(z_{1})\right|. $$
  Since $|1 - z_{1}| = 1$ , $|z_{1}| \leq 1$ and $|z_{i}| = \frac{8 h \sqrt{3I}}{|\xi^{4}_{i}|}$,  by (\ref{F2}),  (\ref{A2}) and inequality (\ref{Gap22}), we have
   \begin{equation}\label{cc2}
   |\Lambda_{r,g}| \leq   (9|A_{4}|)^{(1-g)/4}   |\xi_{1}|^{4r+1-g}   |\xi_{2}| \,\mathfrak{L},\end{equation}
where $\mathfrak{L}$ is equal to 
\begin{eqnarray*}
 {2r - g \choose r}   \frac{2\pi h \sqrt{3I  \left| A_{4}\right|}}{3|\xi^{4}_{2}|}  +  \frac{{r-g+1/4 \choose r+1-g} {r - 1/4 \choose r}}{{2r+1-g \choose r}} 
\left( \frac{9 h \sqrt{3I  \left| A_{4}\right|}}{|\xi^{4}_{1}|}\right)^{2r+1-g}  . 
   \end{eqnarray*}
     Comparing this with (\ref{ub2}), we obtain
   $$
   c_{1}(r,g)  |\xi_{1}|^{4r+1-g}|\xi_{2}|^{-3} + c_{2}(r,g) |\xi_{1}|^{-4r-3(1-g)}|\xi_{2}| > 1 ,
   $$
   where we may take  $c_{1}$ and $c_{2}$ so that 
    \begin{eqnarray*}
  & & c_{1}(r,g) \geq  \\
& &2 \pi\, h  \left(\frac{3 \left| A_{4}\right|^{3/2} }{\left|A_{0}\right|}\right) ^{1/2} \left(\frac{3|A_{4}|}{\left|A_{0}\right|^{3/2}}\right) ^{-g/4} {2r \choose r}  
 \end{eqnarray*} 
  and
\begin{eqnarray*}
& & c_{2}(r,g) \geq \\
 & & 27\, h^{2r+1-g}\left(\frac{3  \left| A_{4}\right|^{1/2}}{\left|A_{0}\right|}\right) ^{1/2} \left(\frac{3|A_{4}|}{|A_{0}|^{3/2}}\right)^{-g/4}(9\sqrt{3I\left| A_{4}\right|})^{2r-g} \frac{{r-g+1/4 \choose r+1-g} {r - 1/4 \choose r}}{{2r+1-g \choose r}} .
\end{eqnarray*}
Substituting $r = 1$ and $g = 0$, we get the desired values for $c_{1}(1 , 0)$ and $c_{2} (1 , 0)$.
 Let us apply the following version of Stirling's formula (see Theorem (5.44) of \cite{Str2}):
$$\frac{1}{2\sqrt{k}}4^{k} \leq {2k \choose k} < \frac{1}{\sqrt{\pi k}}  4^{k} ,$$
for $k \in \mathbb{N}$. 
This leads to the stated  choice of $c_{1}$ immediately.

To evaluate $c_{2}(r , g)$, we first note that
$$
{2r+1-g \choose r} \geq {2r \choose r} \geq \frac{4^{r}}{2\sqrt{r}}.
$$
Next we will show that
\begin{equation}\label{c112}
{r-g+1/4 \choose r+1-g} {r - 1/4 \choose r} < \frac{1}{\sqrt{2}\pi r} ,
\end{equation}
for $r \in \mathbb{N}$ and $g \in \{ 0 , 1\}$, whence we may conclude that
$$ 
 \frac{{r-g+1/4 \choose r+1-g} {r - 1/4 \choose r}}{{2r+1-g \choose r}} <\frac{\sqrt{2}}{\sqrt{r}\pi4^{r}}.
  $$
  This leads immediately to the stated choice of $c_{2}$.
  It remains to show (\ref{c112}).
  Let us set 
   $$X_{r} = {r - 3/4 \choose r} {r - 1/4 \choose r}  = \frac{y_{r}}{r},$$
   whereby
  $$X_{r+1} = {r +1/4 \choose r+1} {r + 3/4 \choose r+1}   = \left( \frac{r^{2} + r + 2/9}{r^{2 }+ r} \right)\frac{y_{r}}{r+1}.$$
  This implies
  $$ y_{1} = 3/16   \     ,    \    y_{r} = \frac{3}{16} \prod_{k = 1}^{r -1} \frac{k^{2} + k + 3/16}{k^{2} + k}.$$
   Since
   $$\prod_{k = 1}^{\infty} \frac{k^{2} + k + 3/16}{k^{2} + k}  = \frac{16}{3 \Gamma(1/4) \Gamma(3/4)} =\frac{16}{3\sqrt{2}\pi},  $$
  we obtain
    $$X_{r} < \frac{1}{\sqrt{2}\pi r}.$$
    For $r \in \mathbb{N}$, we have
     $${r - 3/4 \choose r} >{r + 1/4 \choose r+1}.$$ 
     So when $ g \in \{0 , 1\}$,
     $$
     {r-g+1/4 \choose r+1-g} {r - 1/4 \choose r} \leq X_{r},
     $$
           which completes the proof.
   \end{proof}

 %-----------------------------------------
 \section{Proof of the Main Theorems} 
 %------------------------------------------

  Let us now assume that there are  $4$ distinct solutions $(x_{i} , y_{i})$ to reduced form 
$$\left|F(x , y)\right| \leq h$$
 related to $\omega$ with $|y_{i}| >  \frac{h^{3/4}}{ (3I)^{1/8}}$, corresponding to $\xi_{-1}$, $\xi_{0}$, $\xi_{1}$ and $\xi_{2}$, where we have ordered these in nondecreasing modulus.  We will deduce a contradiction, implying that at most $3$ such solutions can exist. Then Theorem \ref{main22} will be proven, since there are $4$ choices of $\omega$.

We will show that $|\xi_{2}|$ is arbitrarily large in relation to $|\xi_{1}|$. By  (\ref{982}) and (\ref{98h2}), we know that $|\xi_{1}|$ is large and hence $|\xi_{2}|$ is  arbitrarily large, a contradiction.

 \begin{lemma}\label{fin2}
 Let $F(x , y)$ be the quartic form. Suppose that  $(x_{1} , y_{1})$ and $(x_{2} , y_{2})$  are $2$ pairs of solutions to $\left|F(x , y)\right| \leq h$, both related to  $\omega$, a fixed fourth root of unity. Put $\xi_{j} = \xi(x_{j} , y_{j})$. Assume further that either 
 \flushleft
 \begin{itemize}
 \item[(i)] $F(x , y)$ is the quartic form in Theorem $\ref{main22}$ with
  $$
4 \left|A_{4}\right|^{1/8} h^{11/4} I^{9/8} <  |\xi_{1}| < |\xi_{2}| ,
 $$
 or 
 \item[(ii)] $F(x , y)$ is the quartic form in Theorem $\ref{main2}$ with $I > 135$ and
   \begin{equation}\label{982m}
0.39 \left|A_{4}\right|^{1/8} h^{11/4} I^{9/8} <  |\xi_{1}| < |\xi_{2}|.
 \end{equation}
 \end{itemize}
   Then, for each positive integer $r$, 
 $$
 |\xi_{2}| >\frac{4^{r}\sqrt{r}}{27} \frac{|A_{0}|^{1/8}}{\left(3\left|A_{4}\right|^{1/2}\right)^{1/2}h^{2r+1}}(9\sqrt{3I\left|A_{4}\right|})^{-2r} |\xi_{1}|^{4r + 3}.  
   $$
  \end{lemma}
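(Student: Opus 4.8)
The plan is to apply Lemma~\ref{c2} with $g=0$ to the pair $(x_1,y_1)$, $(x_2,y_2)$ and then to extract a lower bound for $|\xi_2|$ from the resulting two–term inequality
$$
c_1(r,0)\,|\xi_1|^{4r+1}|\xi_2|^{-3}+c_2(r,0)\,|\xi_1|^{-4r-3}|\xi_2|>1 .
$$
To invoke Lemma~\ref{c2} with $g=0$ I first need $\Sigma_{r,0}\neq 0$: for $1\le r\le 5$ this is exactly Lemma~\ref{nv12}, and for larger $r$ it is supplied (with the fall--back $g=1$) by Lemma~\ref{nv2}, which forbids both members of a pair $\{\Sigma_{r,0},\Sigma_{r,1}\}$ (resp. $\{\Sigma_{r,0},\Sigma_{r+1,1}\}$) from vanishing. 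Granting this, everything reduces to showing that the first term above is at most $\frac{1}{2}$, for then the second term exceeds $\frac{1}{2}$ and $|\xi_2|>\frac{1}{2\,c_2(r,0)}|\xi_1|^{4r+3}$, which after estimating $c_2(r,0)$ will be the asserted bound.

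The heart of the matter is therefore the estimate $c_1(r,0)\,|\xi_1|^{4r+1}|\xi_2|^{-3}\le \frac{1}{2}$, which I would prove by induction on $r$. The crude inequality $|\xi_2|>|\xi_1|$ is far too weak (the exponent $4r+1-3$ is positive while $|\xi_1|$ is large), so the induction must feed a strong lower bound for $|\xi_2|$ back into this term. For the base case $r=1$ I would use the gap principle (\ref{Gap2}), namely $|\xi_2|\ge (\pi\sqrt3\,h|A_4|^{1/4})^{-1}|\xi_1|^{3}$, together with the hypothesised lower bounds (\ref{982}) or (\ref{98h2}) on $|\xi_1|$; this makes the first term a bounded constant times $|\xi_1|^{-4}$, hence $\le \frac{1}{2}$. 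For the inductive step $r\ge 2$ I would substitute the level-$(r-1)$ conclusion $|\xi_2|>C_{r-1}|\xi_1|^{4r-1}$ into $|\xi_2|^{-3}$; since $c_1(r,0)\asymp {2r \choose r}\asymp 4^{r}$ while $C_{r-1}$ carries the factor $4^{r-1}$, the first term is bounded by a quantity of size $4^{-2r+3}|\xi_1|^{-8r+4}$, which is $\le \frac{1}{2}$ for every $r\ge 2$ because $|\xi_1|$ is large. (If only $\Sigma_{r-1,1}\neq 0$ was available one level down, the weaker exponent $4(r-1)$ still yields $|\xi_2|^{-3}\ll|\xi_1|^{-12r+12}$, which keeps the first term negligible.)

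Once the first term is controlled, the conclusion is a short computation. From $c_2(r,0)\,|\xi_1|^{-4r-3}|\xi_2|>\frac{1}{2}$ I read off $|\xi_2|>\frac{1}{2c_2(r,0)}|\xi_1|^{4r+3}$, and I then simplify the constant: applying the Stirling bounds of \cite{Str2} to the binomial coefficients buried in $c_2(r,0)$ produces the factor $4^{r}\sqrt r/27$, while the inequality $|A_0|\ge 3$ (valid since $A_0<0$ is a nonzero multiple of $3$) absorbs the remaining numerical and $|A_0|$--factors and turns $\frac{1}{2c_2(r,0)}|\xi_1|^{4r+3}$ into the slightly weaker, cleaner bound stated in the lemma.

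I expect the principal obstacle to be the bookkeeping that makes the induction uniform in $r$. One must ensure that $\Sigma_{r,0}\neq 0$ (so the exponent is exactly $4r+3$ and not $4r$) can be arranged at each level by combining Lemmas~\ref{nv12} and \ref{nv2}, and one must verify that the geometrically growing factors ${2r \choose r}\asymp 4^{r}$ in $c_1(r,0)$ are genuinely dominated by the powers of the large quantity $|\xi_1|$ delivered by the induction hypothesis, \emph{simultaneously for all $r$}. The delicate point is that both $c_1(r,0)$ and the induction's gain grow geometrically in $r$, so the argument only closes because the lower bound on $|\xi_1|$ in (\ref{982})/(\ref{98h2}) exceeds $4$ by a wide margin.
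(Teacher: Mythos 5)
Your overall skeleton --- induction on $r$, Lemma \ref{c2}, the gap principle for the base case, feeding the induction hypothesis back to kill the first term, and Stirling-type estimates for the constants --- matches the paper's proof. But there is a genuine gap in how you secure the hypothesis $\Sigma_{r,0}\neq 0$. You assert that for $r>5$ it is ``supplied (with the fall-back $g=1$) by Lemma \ref{nv2}.'' Lemma \ref{nv2} says nothing of the sort: it only guarantees that $\Sigma_{r,0}$ and $\Sigma_{r+h,1}$ do not \emph{both} vanish, so nothing prevents $\Sigma_{r,0}=0$ at some level $r\geq 6$ (Lemma \ref{nv12} covers only $r\leq 5$). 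At such a level your main line of argument is unavailable, and the fall-back as you describe it is used only to keep the first term of the two-term inequality small; it does not produce the conclusion at that level. Indeed, if $\Sigma_{r,0}=0$, the best Lemma \ref{c2} gives you at level $r$ is the $g=1$ inequality, whose second term carries the exponent $-4r-3(1-g)=-4r$, so you obtain only $|\xi_2|\gg|\xi_1|^{4r}$, short of the claimed $|\xi_1|^{4r+3}$ --- and the lemma asserts the full exponent $4r+3$ for \emph{every} positive integer $r$. Your parenthetical remark effectively concedes this: propagating ``the weaker exponent'' means the statement proved by your induction is no longer the statement of the lemma.

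The missing idea, which is exactly how the paper closes the induction, is an overshoot-and-absorb step in the degenerate case. If $\Sigma_{r,0}=0$ (so $r>5$ by Lemma \ref{nv12}), then Lemma \ref{nv2} applied with $h=0$ and with $h=1$ gives \emph{both} $\Sigma_{r,1}\neq 0$ and $\Sigma_{r+1,1}\neq 0$. One first applies Lemma \ref{c2} with $(r,1)$ to get the intermediate bound $|\xi_2|\gg|\xi_1|^{4r}$; this intermediate bound is strong enough to make the first term of the $(r+1,1)$ inequality negligible, and a second application of Lemma \ref{c2} then yields $|\xi_2|\gg|\xi_1|^{4(r+1)}=|\xi_1|^{4r+4}$, which \emph{overshoots} the required exponent by one. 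Finally, the hypothesis $|\xi_1|>4\left|A_4\right|^{1/8}h^{11/4}I^{9/8}$ (or its $0.39$ variant in case (ii)) is used to trade that surplus power of $|\xi_1|$ for the constant factors, landing exactly on the stated bound with exponent $4r+3$ and the coefficient $\frac{4^{r}\sqrt{r}}{27}\cdots$. Without this two-step chaining at levels $(r,1)$ and $(r+1,1)$, your induction cannot recover the exponent $4r+3$ at levels where $\Sigma_{r,0}$ vanishes, so the proof as proposed is incomplete.
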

 
 \begin{proof}
 We will use the upper bound  (\ref{98h2}) for case (i) and the upper bound (\ref{982m}) for case (ii).  Note that (\ref{982m}) is a generalization  for the upper bound (\ref{982}) obtained to treat the equation $|F(x , y)| = 1$. By (\ref{n2}), $|\xi_{2}| \geq \frac{3|\xi_{1}|^{3}}{2 \pi h  \left|A_{4}\right|^{1/4}}$. This implies    
  $$ 
  c_{1}(1 , 0)  |\xi_{1}|^{5} |\xi_{2}|^{-3} \leq 4\, h^4 \pi \times 12^{3/2}\left( \frac{3}{\left|A_{0}\right|}\right)^{1/2} \left|A_{4}\right| \left| \xi_{1} \right|^{-4}
  $$
 Therefore, by (\ref{98h2}) or (\ref{982m}) and from the fact that $\left| A_{4}\right|  < 4 I$, we obtain 
 $$
  c_{1}(1 , 0)  |\xi_{1}|^{5} |\xi_{2}|^{-3} < 0.01  $$
Lemma \ref{nv12} implies that  $\Sigma_{1 , 0} \neq 0$. So we may apply Lemma \ref{c2}  to get
 $$c_{2}(1 , 0) I^{3} |\xi_{1}|^{-7}|\xi_{2}| > 0.99 .$$
  One may now conclude
   $$
   |\xi_{2}| > \frac{0.99}{c_{2}(1 , 0)} |\xi_{1}|^{7} > 0.93\, h^{-3} \left( \frac{3\left|A_{4}\right|^{1/2}}{\left|A_{0}\right|}\right)^{-1/2}(9\sqrt{3I  \left|A_{4}\right|})^{-2}  |\xi'_{1}|^{7}. 
   $$
  This proves the lemma for $r = 1$.   Moreover, we may conclude that
  $$
c_{1}(2 ,0)|\xi_{1}|^{9}|\xi_{2}|^{-3} < \frac{18\, h^{10} \sqrt{\pi}\times 16 \times (5 \times 27)^{3}  \left|A_{4}\right|}{|A_{0}|^{2}127^{3}\sqrt{2}}  \left( 9\sqrt{3I  \left|A_{4}\right|} \right)^{6}  \left| \xi_{1} \right|^{-12}.
$$
Since $\left| A_{4} \right| \leq 4I$ , by (\ref{98h2}) or (\ref{982m}) we have 
$$
c_{1}(2 ,0)|\xi_{1}|^{9}|\xi_{2}|^{-3} < 0.1.
$$
Via Lemmas \ref{c2} and \ref{nv12}, we obtain
$$
|\xi_{2}| > \frac{0.9}{c_{2}(1 , 0)} |\xi_{1}|^{11}.
$$
This leads to the proof of the Lemma for $r = 2$, after substituting the value of $c_{2}(2 , 0)$.
  To complete the proof, we use induction on $r$.
  Suppose that for some $r\geq 2$,
 $$
  |\xi_{2}| >\frac{4^{r}\sqrt{r}}{27} \frac{|A_{0}|^{1/8}}{\left(3\left|A_{4}\right|^{1/2}\right)^{1/2}h^{2r+1}}(9\sqrt{3I \left|A_{4}\right|})^{-2r} |\xi_{1}|^{4r + 3}.
  $$  
 Then
\begin{eqnarray*}
& & c_{1}(r+1 ,0)|\xi_{1}|^{4r+5}|\xi_{2}|^{-3} < \\
& & \frac{18\, \sqrt{\pi} \times 27^{3} \left|A_{4}\right| h^{6r+4}}{|A_{0}|^{7/8}4^{2r-1}\sqrt{ (r + 1)} r\sqrt{r}} \left( 9\sqrt{3I \left|A_{4}\right|} \right)^{6r}  \left| \xi_{1} \right|^{-8r -4}.
\end{eqnarray*}
By (\ref{98h2}) or (\ref{982m}), we have 
$$
c_{1}(r+1 ,0) |\xi_{1}|^{4r+5}|\xi_{2}|^{-3} < 0.1.
$$
 If $\Sigma_{r+1 , 0} \neq 0$, then  by Lemma \ref{c2},
$$ 
c_{2}(r+1 , 0) |\xi_{1}|^{-4(r+1)-3}|\xi_{2}| > 0.9.
$$
Hence,
\begin{eqnarray*}
& & |\xi_{2}| > \frac{0.9}{c_{2}(r +1 , 0)} |\xi_{1}|^{4(r+1) + 3}\\
 & > &   \frac{4^{r+1}\sqrt{r+1}}{27\,h^{2r+3}}\left(\frac{\left|A_{0}\right|}{3\left|A_{4}\right|^{1/2}}\right)^{1/2}(9\sqrt{3I})^{-2r-2} |\xi_{1}|^{4r + 7}.
 \end{eqnarray*}
If, however, $\Sigma_{r+1, 0} = 0$, then  by Lemma \ref{nv2}, both $\Sigma_{r+1,1}$ and $\Sigma_{r+2 ,1}$ are both non-zero and by Lemma \ref{nv12}, we have $r > 5$. Using the induction hypothesis, we get
$$c_{1}(r + 1,1)  |\xi_{1}|^{4r+4}|\xi_{2}|^{-3} < 0.01$$
and thus by Lemma \ref{c2}, (\ref{22}) and (\ref{982}), we conclude
$$c_{2}(r+1,1) |\xi_{1}|^{-4r-4}|\xi_{2}| >0.99 .$$
So, we obtain
$$|\xi_{2}| > \frac{4^{r+1}\sqrt{r+1}|A_{0}|^{1/8}}{27\, h^{2r+2} \left|3A_{4}\right|^{1/4} }\left( 9 \sqrt{3I \left|A_{4}\right|}\right)^{-2r-1} |\xi_{1}|^{4(r+1)}.$$
Consequently,   $c_{1}(r+2 , 1) |\xi_{1}|^{4r+8}|\xi_{2}|^{-3}$ is less than
\begin{equation*}
\frac{2 \, \sqrt{\pi}\times 27 \left(3  \left|A_{4}\right|\right) \left( 9 \sqrt{3I \left|A_{4}\right|}\right)^{6r+3} h^{6r+7}
}{4^{2r+1}(r+1)\sqrt{(r+1)\,(r+2)\,|A_{0}|} } 
|\xi_{1}|^{-8r-4} 
< 0.1.
\end{equation*}
 A final application of Lemma \ref{c2} implies
$$c_{2}(r+2,1)|\xi_{1}|^{-4r-8}|\xi_{2}| >0.9$$
or
$$|\xi_{2}| > \frac{0.9}{c_{2}(r+2 , 1)} |\xi_{1}|^{4r+8} . $$
It follows that
$$
|\xi_{2}| >\frac{\sqrt{r+2}\,  4^{r+2}}{27} \frac{|A_{0}|^{1/8}}{ 3^{1/4}h^{2r+4}}\left(9\sqrt{3I \left|A_{4}\right|}\right)^{-2r-3} |\xi_{1}|^{4(r+ 1) + 4}.
$$
Since $|\xi_{1}| >  4 I^{9/8}h^{11/4} \left|A_{4}\right|^{1/8}$, we conclude that
$$
|\xi_{2}| >\frac{4^{r+1}\sqrt{r+1}}{27} \frac{|A_{0}|^{1/8}}{\left(3\left|A_{4}\right|^{1/2}\right)^{1/2}}(9\sqrt{3I  \left|A_{4}\right|})^{-2r-2} |\xi_{1}|^{4r + 7}.
$$
\end{proof}

%------------------------------------------------------
\section{Forms With Small Discriminant}\label{FWSD2}
%------------------------------------------------------------------ 
         
 To finish the proof of Theorem \ref{main2}, we need to study the quartic forms $F(x , y)= a_{0}x^{4} + a_{1}x^{3}y + a_{2}x^{2}y^{2} + a_{3}xy^{3} + a_{4}y^{4}$ with $0 < I_{F} \leq 135$ and $A_{0}= 3(8a_{0}a_{2} - 3a_{1}^{2}) < 0$.  
  
We followed  an algorithm of Cremona, in Section 4.6 of  \cite{Cre2}, which gives all inequivalent integer quartics with given invariant $I$ and $J = 0$. 
 Using Magma, we counted the number of solutions to
 $$
 \left| F(x , y)\right| =1,
 $$
  for all reduced quartic forms $F$ with $I_{F} \leq 135$ and $J_{F} = 0$. Regarding $(x , y)$ and $(-x , -y)$ as the same, we didn't find any form $F$ for which there are more than $4$ solutions to $F(x , y) = \pm 1$. 
Our programming was not efficient in the sense that it solves more than one equation from some equivalent classes. While reading the earlier versions of this paper, the  referee has verified these computations in a very efficient way and kindly shared his results with the author. The following table contains all representatives of the complete set of binary forms $F$ with $I_{F} \leq 135$ and $J_{F} = 0$ that split in $\mathbb{R}$.

\bigskip

\begin{tabular}{l @{ \qquad } l}
\hline
$F(x , y)$ & $I_{F}$\\
%\multicolumn{2}{c}{$I_{F}$}\\
\hline
$x^4 - x^3y - 6x^2y^2 + xy^3 + y^4$ & $51$\\
$x^4 + 2x^3y - 6x^2y^2 - 2xy^3 + y^4$ & $60$ \\
$x^4  - 12 x^2 y^2 + 16 x y^3 - 4y^4$ & $96$ \\
$x^4 + 8 x^3 y + 6 x^2 y^2 - 4 x y^3 - 2 y^4$ & $108$\\
$x^4 + x^3y - 15 x^2 y^2 + 18 x y^3 - 4 y^4$ & $123$\\
\hline \\
\end{tabular}

\noindent are representatives of the complete set of binary forms $F$ with $I_{F} \leq 135$ and $J_{F} = 0$ that split in $\mathbb{R}$. To solve the Thue equations $F(x , y)= \pm 1$ for forms $F$ in above table, we may also use PARI since all of the binary forms in the table are monic.

\noindent If $$F(x , y) = x^4 -x^3y -6x^2y^2 + xy^3+y^4$$ then  $I_{F} = 51$ and the solutions are:
$$(-1 , 0), (0 , 1), (1 , 2), (-2 , 1).$$
Note that we can write
$$F(x , y) = x^4 -x^3y -6x^2y^2 + xy^3+y^4 = \xi^4(x , y) - \eta^{4}(x , y),$$
so that $\frac{\eta(x , y)}{\xi(x , y)} = \frac{x - iy}{x+iy}$ and we have $\frac{\eta(-1, 0)}{\xi(-1 , 0)} = 1$, $\frac{\eta(0 , 1)}{\xi(0 , 1)} = -1$, $\frac{\eta(1 , 2)}{\xi(1 , 2)} = -\frac{3+4i}{5}$ and $\frac{\eta(-2 , 1)}{\xi(-2 , 1)} = \frac{3+4i}{5}$. This means 
$(-1 , 0)$ is related to $\omega = 1$, $(0 , 1)$  is related to $\omega = - 1$, $(1 , 2)$  is related to $\omega = -i$ and  $(-2 , 1)$  is related to $\omega = i$. Therefore, related to each root of unity there is one pair of solution.

\bigskip

\noindent If $$
F(x , y) = x^4 + 2x^3y - 6x^2y^2 - 2xy^3 + y^4
$$
Then $I_{F} = 60$ and the solutions are $(1 , 0)$ and $(0 , 1)$.

\bigskip

\noindent If $$F(x , y) = x^4 - 12x^2y^2 + 16xy^3 - 4y^4$$ then $I_{F} = 96$ and
$$F(x , y)= 1$$
has $4$ solutions $(5 , 2)$, $(1 , 3)$, $(1 , 1)$, $(1 , 0)$ and the equation
$$F(x , y) = -1$$ has no solution.

\bigskip
  
 \noindent If $$
F(x , y) = x^4 + 8 x^3 y + 6 x^2 y^2 - 4 x y^3 - 2 y^4
$$ 
then $I_{F} = 108$ and the solutions are $(1 , 0)$ and $(-1 , 1)$.

\bigskip

\noindent If $$
F(x , y) = x^4 + x^3y - 15 x^2 y^2 + 18 x y^3 - 4 y^4$$ then $I_{F} = 123$ and the solutions are $(1 , 1)$ and $(1 , 0)$.

\bigskip

\section{Acknowledgments}
 
 The author would like to thank Professor Michael Bennett for his support and insightful comments. The author is  indebted to the anonymous referee for his very careful reading and valuable comments on the earlier version of this paper. The referee's suggestions certainly  improved both presentation and mathematical contents of this manuscript


\begin{thebibliography}{999}
 


\bibitem{Akh2} S.~Akhtari. The Diophantine equation $aX^{4} - bY^{2} = 1$. (2007), to appear in J. Reine Angew. Math.

\bibitem{Akhw2} S.~Akhtari, A.~Togbe and G. Walsh. The Diophantine equation $aX^{4} - bY^{2} = 2$.  Acta. Arith. $31$  (2008), 145-169.

\bibitem{Akho2} S.~Akhtari, R.~Okazaki. The quartic Thue equations. (2008), to appear in J. Number Theory.

 \bibitem{Bak2} A.~Baker. Contributions to the theory of Diophantine equations. I. On the representation of integers by binary forms, Philos. Trans. R. Soc. Lond. Ser. A $263$ (1967/1968),  173–191.

\bibitem{Bak12}  Baker.~A. The theory of linear forms in logarithms. InTranscendence Theory: Advances and Applications. Baker.~A., Masser.~D. W. ed. Academic Press 1977.

\bibitem{Ben2} M.A.~Bennett. On the representation of unity by binary cubic forms. Trans. Amer. Math. Soc. $353$ (2001), 1507-1534.


\bibitem{Cas2} J.W.~Cassels. An Introduction to the Geometry of Numbers. Springer-Verlag,1959. 

\bibitem{Chu2} G.V.~Chudnovsky. On the method of Thue-Siegel. Ann. of Math. II Ser. $117$ (1983), 325-382.


\bibitem{Cre2} J.E.~Cremona. Reduction of binary cubic and quartic forms. LMS JMC $2$ (1999), 62-92.

\bibitem{Eve2} J.H.~Evertse. On the equation $ax\sp{n}-by\sp{n}=c$.  Compositio Math.  $47$  (1982), no. 3, 289--315.

\bibitem{Ev2} J.H.~Evertse. On the representation of integers by binary cubic forms of positive discriminant. Invent. Math.$73$(1983), 117-138.

\bibitem{Eve3} J.H.~Evertse. Upper bounds for the numbers of solutions of Diophantine equations. Mathematical Centre Tracts, $168$. Mathematisch Centrum, Amsterdam, 1983.

\bibitem{Kre2}V.~Krechmar. On the superior bound of the number of representation of an integer by binary forms of the fourth degree (Russian). Bull. Acad. Sci. URSS, Ser. Math. (1939), 289-302 

 \bibitem{Lju2} W.~Ljunggren. On the representation of integers by certain binary cubic and biquadratic forms. Acta. Arith. XVII (1971)
 
 \bibitem{Sie282} C.L.~Siegel. \"Uber einige Anwendungen diophantischer Approximationen, Abh. Preuss. Akad. Wiss. $1$ (1929).
 
 \bibitem{Sie292} C.L.~Siegel. Einige Erl\"auterungen zu Thues Untersuchungen \"uber Ann\"aherungswerte algebraischer zahlen und diophantische Gleichungen, Akad. D. Wiss. in Gottingen II, mat.-Phys. 1. 1970. no $8$, Gott. (1970), 169-195.
 
 \bibitem{Sie302} C.L.~Siegel. Die Gleichung $ax^{n} - by^{n} = c$, Math. Ann. $114$ (1937), 57-68.
 
 \bibitem{Str2} K.~Stromberg. An Introduction to Classical Real Analysis. Wadsworth international mathematical series, 1981.
 
  \bibitem{thu342} A.~Thue. Bemerkungen \"Uber gewisse N\"aherungsbr\"uche algebraischer Zahlen, Kra. Vidensk. Selsk. Skrifter. I. mat. nat. Kl. (1908), no $3$, Kra, 1908. 
 
 \bibitem{Thu2} A.~Thue.  \"Uber Ann\"aherungenswerte algebraischen Zahlen. J. reine angew. Math. 135 (1909), 284-305.
 
 \bibitem{thu352}A.~Thue. \"Uber rationale Anna\"herungswerte der reellen W\"uzel der ganzen Funktion dritten Grades $x^{3}-ax-b$. Kra. Vidensk. Selsk. Skrifter. I. Mat. Nat. Kl. (1908), no. $6$, Kra, 1908.
 
 \bibitem{thu382} A.Thue. Ein Fundamental Theorem zur Bestimmung vou Ann\"aherungs-Werten aller W\"urzeln gewisser ganzer Funktionen.  J. Reine Angew. Mat. 138 (1910), 96-108.
 
 \end{thebibliography}
\end{document}